\newcommand{\wto}{\rightharpoonup}
\newcommand{\eps}{\varepsilon}
\newcommand{\mc}{\mathcal}
\newcommand{\R}{\mathbb{R}}
\newcommand{\RR}{\mc R}
\newcommand{\EE}{\mc E}
\newcommand{\FF}{\mc F}
\renewcommand{\to}{\rightarrow}
\renewcommand{\d}{\,\mathrm{d}}
\renewcommand{\div}{\mathrm{div}\,}
\DeclareMathOperator{\dist}{dist}
\DeclareMathOperator{\tr}{tr}
\DeclareMathOperator*{\argmin}{arg\,min}
\numberwithin{equation}{section}
\newtheorem{thm}{Theorem}[section]
\newtheorem{prop}[thm]{Proposition}
\newtheorem{lemma}[thm]{Lemma}
\newtheorem{cor}[thm]{Corollary}
\theoremstyle{definition}
\newtheorem{rmk}[thm]{Remark}
\newtheorem{ex}[thm]{Example}
\begin{document}
	
	\author[M.G. Mora]{Maria Giovanna Mora}
	\author[F. Riva]{Filippo Riva}
	\address[M.G. Mora and F. Riva]{Dipartimento di Matematica ``Felice Casorati'', Universit\`a di Pavia, Via Ferrata 5, 27100 Pavia (Italy)}
	\email{mariagiovanna.mora@unipv.it}
	\email{filippo.riva@unipv.it}

	\title[Pressure loads and the derivation
	of linear elasticity]{Pressure live loads and the variational derivation\\ 
		of linear elasticity}

	\begin{abstract}
		The rigorous derivation of linear elasticity from finite elasticity by means of $\Gamma$-convergence is a well-known result, which has been extended to different models also beyond the elastic regime. However, in these results the applied forces are usually assumed to be dead loads, that is, their density in the reference configuration is independent of the actual deformation. In this paper we begin a study of the variational derivation of linear elasticity in the presence of live loads.
		We consider a pure traction problem for a nonlinearly elastic body subject to a pressure live load and we compute its linearization for small pressure by $\Gamma$-convergence. We allow for a weakly coercive elastic energy density and we prove strong convergence of minimizers.
		\bigskip
		
		\noindent {\bf Keywords:} nonlinear elasticity, linear elasticity, live loads, pressure loads, Gamma-convergence\bigskip
		
		\noindent {\bf 2020 MSC:} 74B20 (49J45)
	\end{abstract}
	
	\maketitle
	
	\pagenumbering{arabic}

	\section{Introduction}
	
	Linear elasticity is a well-known and powerful mathematical approximation of the nonlinear theory of elasticity, with extensive application to the structural analysis and the numerical treatment of elastic bodies. In engineering textbooks its derivation is classical and is based on a formal linearization of finite elasticity about a reference configuration. 
	A rigorous mathematical derivation via $\Gamma$-convergence was developed only rather recently in the pioneering work \cite{DMNegPerc}, where a Dirichlet boundary value problem was considered.
	A similar approach was then applied to different frameworks in elasticity, such as rubber-like materials \cite{AgoDMDS}, multiwell models \cite{AliDMLazPal,AgoBlassKou, Schmidt}, 
	elasticity with residual stress \cite{ParTom,ParTom2}, and incompressible materials \cite{MaiPerc3}. 
	Beyond elasticity we also mention the papers
	\cite{Fried1,Fried2, NegToad, NegZan} for models in fracture mechanics, \cite{FriedKruz} for viscoelasticity, \cite{MielkeStef} for plasticity, and the recent contribution \cite{FriedKreZem} for materials with stress driven rearrangement instabilities.
	
	Linearization of pure traction problems has been recently studied in \cite{JesSch, MadPercTom1, MadPercTom2, MaiPerc1}, again in the context of elasticity. In this setting 
	a full $\Gamma$-convergence result has been obtained in \cite{MaorMora} and later extended to incompressible materials in \cite{MaiPerc2}.
	As observed in \cite{MaorMora}, in the Dirichlet case the boundary conditions prescribe the rigid motion to linearize about, whereas in the purely Neumann case the linearization process occurs around suitable rotations that are preferred by the applied forces.
	
	In all this literature the main focus is on understanding the behavior of the bulk elastic energy and the applied forces are usually assumed to be dead loads, namely their density in the reference configuration is independent of the actual deformation. This assumption is mathematically convenient, since the work done by the loadings turns out to be a continuous perturbation of the elastic energy, so that $\Gamma$-convergence of the total energy immediately follows from $\Gamma$-convergence of the elastic energy. 
	However, restricting the analysis to dead loads is physically unsatisfactory, since the only realistic examples of dead loads are the gravitational body force and the zero surface load (see, e.g., \cite{PodGuid, PodGuidCaff} and \cite[Section~2.7]{Ciarlet}).
	
	In this paper we begin a study of the derivation of linear elasticity in the presence of \emph{live loads}. More precisely, we consider a pure traction problem for a hyperelastic body $\Omega\subset \R^n$ subject to a (small) \emph{pressure load} on its boundary. In this setting the total energy of a deformation $y\colon \Omega\to\R^n$ is given by
	\begin{equation*}
		{\mc T}_\eps(y):=\int_\Omega W(x,\nabla y(x))\d x+\eps\int_\Omega\pi(y(x))\det\nabla y(x)\d x,
	\end{equation*}
	where the elastic energy density $W\colon \Omega\times \R^{n\times n}\to [0,+\infty]$ satisfies the usual assumptions of nonlinear elasticity (see \ref{hyp:W1}--\ref{hyp:W5}) and $\eps\pi$ is the intensity of the applied pressure load, with $\eps>0$ a small parameter and 
	$\pi\colon \R^n\to \R$ a given function. For simplicity in this introduction we assume $\pi$ to be continuous. As shown in \cite[Proposition~5.1]{PodGuidCaff} (see also \cite[Proposition~1.2.8]{KruzRoub}), the second term in the energy ${\mc T}_\eps$ is the potential of the pressure load 
	\begin{equation}\label{eq:pfield}
		-\eps \pi(y(x))({\rm cof}\,\nabla y(x))n_{\partial \Omega}(x) \qquad\text{for }x\in \partial\Omega
	\end{equation}
	acting on the whole boundary of $\Omega$, where ${\rm cof}\,F$ denotes the cofactor of the matrix $F$ and $n_{\partial\Omega}$ is the outward unit normal to $\partial\Omega$. In the deformed configuration $y(\Omega)$ the pressure load \eqref{eq:pfield} corresponds to the surface force 
	\begin{equation*}
		-\eps\pi(z)n_{\partial(y(\Omega))}(z)\qquad\text{for }z\in \partial(y(\Omega)).
	\end{equation*}
	
	Since $W(x,\cdot)$ is frame-indifferent and minimized at the identity, it is immediate to see that for $\eps=0$ the minimizers of $\mc T_\eps$ are all the rigid motions of $\Omega$.
	When $\eps$ is small, it is thus natural to expect minimizers to be close to rigid motions and their asymptotic behavior to be described by a linearization of the energy.
	In pure traction problems, as mentioned before, the applied forces select the class of rigid motions around which
	the linearization takes place (see \cite{MaorMora}). Indeed, if $y_\eps$ is a minimizer of $\mc T_\eps$, then we have
	\begin{equation*}
		{\mc T}_\eps(y_\eps)\leq \eps\int_\Omega\pi(Rx)\d x \qquad \text{for every rotation } R\in SO(n).
	\end{equation*}
	If we assume $y_\eps$ to be of the form 
	$y_\eps(x)=R_0(x+\eps u_0(x))$ with $R_0\in SO(n)$, then by a formal expansion we obtain
	\begin{equation*}
		\frac{\eps^2}{2}\int_\Omega Q(x,e(u_0)(x))\d x+\eps\int_{\Omega}\pi(R_0 x)\d x+O(\eps^2) \leq \eps\int_\Omega\pi(Rx)\d x
	\end{equation*}
	for every rotation $R\in SO(n)$. Here $Q(x,\cdot)$ is the quadratic form given by the Hessian of $W(x,\cdot)$ computed at the identity and $e(u_0)$ is the symmetric gradient of $u_0$.
	Dividing by $\eps$ and letting $\eps$ tend to zero, we deduce that $R_0$ is a so-called \emph{optimal rotation}, that is, $R_0$ belongs to the set
	\begin{equation*}
		\mc R:= \argmin\limits_{R\in SO(n)}\left\{\int_\Omega \pi(Rx)\d x\right\}.
	\end{equation*}
	
	Assume now for simplicity that the identity matrix belongs to $\RR$ (one can always reduce to this case, up to rotating the whole system). The previous argument suggests that in order to identify the limiting behavior of minimizers
	one needs to renormalize the energy as follows:
	\begin{align}
		\frac{1}{\eps^2}\mc E_\eps(y) :\! & =\frac{1}{\eps^2}\left( {\mc T}_\eps(y)-\eps\int_{\Omega}\pi(x)\d x\right) \nonumber \\
		& =
		\frac{1}{\eps^2}\int_\Omega W(x,\nabla y(x))\d x+\frac1\eps \int_\Omega \big(\pi(y(x))\det\nabla y(x)-\pi(x)\big)\d x. \label{eq:Eepsint}
	\end{align}
	
	Under suitable assumptions for $\pi$ and a weak $p$-coercivity condition on $W$ with $1<p\leq2$ (see \ref{hyp:W5}), we compute the $\Gamma$-limit of the rescaled energies $\frac1{\eps^2}\mc E_\eps$ and we establish a compactness result for deformations with equibounded energies. Here deformations are assumed to have zero average on $\Omega$, as it is common in Neumann boundary value problems.
	More precisely, we prove the following results:\medskip
	
	\noindent
	{\bf Compactness:} If $\mc E_\eps(y_\eps)\leq C\eps^2$, then there exist rotations $R_\eps\in SO(n)$ and displacements $u_\eps\in W^{1,p}(\Omega;\R^n)$
	such that 
	\begin{equation}\label{eq:yeps2}
		y_\eps(x)=R_\eps(x+\eps u_\eps(x)) \qquad \text{for } x\in\Omega
	\end{equation}
	and, up to subsequences, there holds\smallskip
	\begin{itemize}
		\item[$\bullet$] $u_\eps\wto u_0$ weakly in $W^{1,p}(\Omega;\R^n)$ with $u_0\in H^{1}(\Omega;\R^n)$,\smallskip
		\item[$\bullet$] $R_\eps\to R_0$ with $R_0\in \RR$.\medskip
	\end{itemize}
	
	\noindent
	{\bf $\Gamma$-convergence:} Under the above notion of convergence $y_\eps\to (u_0,R_0)$, the rescaled energies $\frac{1}{\eps^2}\mc E_\eps$ $\Gamma$-converge to
	\begin{equation*}
		\mc E_0(u_0,R_0):=\frac 12\int_\Omega Q(x,e(u_0)(x))\d x+\int_{\partial\Omega}\pi(R_0x)n_{\partial\Omega}(x)\cdot u_0(x)\d\mc H^{n-1}(x).\medskip
	\end{equation*}
	
	We also deduce \emph{strong convergence} of (almost) minimizers: if $(y_\eps)$ is a sequence of (almost) minimizers, then we have in addition that $u_\eps\to u_0$ strongly in $W^{1,p}(\Omega;\R^n)$ and the pair $(u_0,R_0)$ is a minimizer of $\mc E_0$.
	
	We now comment on the expression of the limiting energy $\mc E_0$, which features two terms: the usual linear elastic energy and a potential term accounting for the surface load $-\pi(R_0x)n_{\partial\Omega}(x)$ on $\partial\Omega$. The emergence of this boundary term can be explained by the following heuristic considerations: on the one hand, a formal linearization of \eqref{eq:pfield} leads to a pressure load of the above form; on the other hand, if $y$ is smooth enough, the force term in \eqref{eq:Eepsint} can be written as 
	\begin{equation*}
		\frac1\eps \left( \int_{y(\Omega)}\pi(x)\d x -\int_{\Omega}\pi(x)\d x\right) .
	\end{equation*}
	Hence, taking into account \eqref{eq:yeps2}, computing the limit of the above expression on sequences $(y_\eps)$ with equibounded energies corresponds to a sort of shape derivative of the functional $\Omega\mapsto \int_{\Omega}\pi(x)\d x$ (see, e.g., \cite[Proposition~17.8]{Maggi}). 
	However, we stress that in the present setting deformations $y_\eps$ are only of Sobolev regularity and are close to rigid motions only in the sense of $W^{1,p}(\Omega;\R^n)$, therefore the usual arguments in the context of shape derivatives do not apply.
	
	From a mathematical viewpoint the main difference with respect to previous contributions dealing with dead loads, is that the force term in \eqref{eq:Eepsint} is not a continuous perturbation of the elastic energy.
	Indeed, our assumptions on $W$ imply that deformations are at most strongly convergent in $W^{1,p}(\Omega;\R^n)$ with $1<p\leq 2$ and this is not enough to guarantee convergence of the determinants. Moreover, the crucial step in the proof of compactness is to show that deformations satisfying the bound $\mc E_\eps(y_\eps)\leq C\eps^2$ have an elastic energy of order $\eps^2$. Once this is established, one can apply the rigidity estimate by Friesecke, James, and M\"uller \cite{FriesJamesMul}
	and deduce \eqref{eq:yeps2}, together with a uniform bound for $(u_\eps)$ in $W^{1,p}(\Omega;\R^n)$. In the case of dead loads deducing the $\eps^2$-bound on the elastic energy is straightforward, since the force term is linear with respect to the deformation. In our setting, instead, this is one of the main difficulties. We show that the problem can be solved under two different sets of conditions: 
	\begin{itemize}
		\item $\pi$ Lipschitz continuous in a suitable neighborhood of $\Omega$ and nonnegative;
		\item $\pi$ Lipschitz continuous in a suitable neighborhood of $\Omega$, with a growth condition on its negative part (see \ref{hyp:pi3}) and an additional coercivity property for $W(\cdot, F)$ in terms of $\det F$ (see \ref{hyp:W6}).
	\end{itemize}
	We note that this additional coercivity condition on $W$ is satisfied by a large class of elastic materials (see, e.g., \cite[Remark~2.8]{AgoDMDS}).
	
	We also observe that both the nonlinear energy \eqref{eq:Eepsint} and the $\Gamma$-limit $\mc E_0$ are well defined if $\pi$ is merely a continuous function. However, because of the low regularity of deformations, in our proofs we need $\pi$ to be Lipschitz continuous, at least in a suitable neighborhood of $\Omega$. How to extend our analysis to less regular pressure loads is an interesting question that we plan to consider in a future work.
	
	Finally, in \cite{MaorMora} it is proved that, in the case of dead loads, the set $\RR$ of optimal rotations is a submanifold of $SO(n)$ and, as a consequence, one can prove that the distance of the approximating rotations $R_\eps$ in \eqref{eq:yeps2} from $\RR$ is at most of order $\sqrt\eps$. In the last part of the paper we show that
	neither of these properties is true, in general, in the present setting.

	\subsection*{Plan of the paper}
	In Section~\ref{sec:setting} we set the problem and we state the main assumptions. In Section~\ref{sec:nonnegative} we discuss the case of a nonnegative pressure intensity $\pi$. We then extend our analysis to pressures with arbitrary sign in Section~\ref{sec:general}. Finally, in Section~\ref{sec:secondorder} we compute a refined $\Gamma$-limit, which takes into account how much deformations differ from being optimal rotations, and we make a comparison with the results proved in \cite{MaorMora} in the case of dead loads.
	
	\section{Setting of the problem}\label{sec:setting}
	
	\subsection{Notation and preliminaries} 
	Throughout the paper, the symbols $C$ or $c$ will be used to denote some positive constants not depending of $\eps$, whose value may change from line to line.\par 
	Given two (extended) real numbers $a$ and $b$ the notation $a\vee b$ (respectively, $a\wedge b$) stands for the maximum (respectively, the minimum) between the two numbers. Given a scalar function $f$, we denote its positive and negative part by $f^+$ and $f^-$, respectively, so that $f=f^+-f^-$. By $B_r\subset \R^n$ we mean the open ball with radius $r>0$ centered at the origin. 
	
	Let $\Omega$ be an open set in $\R^n$. For $p\in [1,\infty]$ 
	the norms in $L^p(\Omega)$ and $L^p(\Omega;\R^n)$ will be simply denoted by $\|\cdot\|_p$. The conjugate exponent of $p\in[1,\infty]$ will be denoted by $p'$. 
	The notation $\mathring{W}^{1,p}(\Omega;\R^n)$ stands for the space of Sobolev functions $y\in {W}^{1,p}(\Omega;\R^n)$ with zero average; if $p=2$, we shall write $\mathring{H}^{1}(\Omega;\R^n)$ instead of $\mathring{W}^{1,2}(\Omega;\R^n)$. 
	
	We denote by $\R^{n\times n}$, $\R^{n\times n}_{\rm sym}$, and $\R^{n\times n}_{\rm skew}$ the set of $(n\times n)$-matrices and the subsets of symmetric and skew-symmetric matrices, respectively. The set of rotations is denoted by $SO(n)$, namely
	\begin{equation*}
		SO(n)=\{R\in\R^{n\times n}: \ R^TR=I, \ \det R=1\}.
	\end{equation*}
	Finally, we recall that for every $F\in \R^{n\times n}$ and $\eps>0$ there holds
	\begin{equation}\label{eq:sviluppodet}
		\det(I+\eps F)=1+\sum_{k=1}^{n}\eps^k \iota_k(F),
	\end{equation}
	where $\iota_k(F)$ is a homogeneous polynomial of degree $k$ in the entries of $F$. In particular, there exists a constant $C>0$, depending only on $n$,
	such that
	\begin{equation}\label{eq:estik}
		|\iota_k(F)|\le C|F|^k \qquad\text{for every } F\in \R^{n\times n}.
	\end{equation}
	For $k=1$ and $k=n$ we have that $\iota_1(F)=\tr F$ and $\iota_n(F)=\det F$.
	
	For the definition and the properties of $\Gamma$-convergence we refer to the monograph \cite{DM}.
	
	\subsection{The main assumptions}
	Let $\Omega\subset\R^n$, with $n\ge 2$, be a bounded domain with Lipschitz boundary representing the reference configuration of a hyperelastic body. Up to a translation of the axes, we can assume without loss of generality that the origin is the barycenter of $\Omega$, i.e.,
	\begin{equation}\label{eq:baricenter}
		\int_{\Omega}x\d x=0.
	\end{equation}
	For future use we also introduce the set
	\begin{equation*}
		\mc O:=\bigcup_{R\in SO(n)}R\Omega,
	\end{equation*}
	which is an open annulus (if $0\not\in\Omega$; an open ball if $0\in\Omega$) centered at $0$ and containing $\Omega$.
	
	The stored energy density of the body is assumed to be a Carath\'eodory function $W\colon \Omega\times \R^{n\times n}\to [0,+\infty]$ 
	satisfying the following conditions for almost every $x\in \Omega$:\smallskip
	\begin{enumerate}[label=\textup{(W\arabic*)}]
		\item\label{hyp:W1} $W(x,F)=+\infty$ if $\det F\leq 0$ (orientation preserving condition);\smallskip
		
		\item\label{hyp:W2} $W(x,RF)=W(x,F)$ for every $F\in \R^{n\times n}$ and $R\in SO(n)$ (frame indifference);\smallskip
		
		\item\label{hyp:W3} $W(x,I)=0$ (the reference configuration is stress-free);\smallskip
		
		\item\label{hyp:W4} $W(x,\cdot)$ is of class $C^2$ in a neighborhood of $SO(n)$, independent of $x$, where the second derivatives of $W$ are bounded, 
		uniformly with respect to $x\in\Omega$;\smallskip
		
		\item\label{hyp:W5} $W(x,F)\ge c_1 g_p(\dist(F;SO(n)))$ for every $F\in \R^{n\times n}$ and for some $p\in(1,2]$,
		where $g_p$ is defined as
		\begin{equation}\label{eq:gp}
			g_p(t):=\begin{cases}
				\frac{t^2}{2}&\text{ if }t\in [0,1],\smallskip\\
				\frac{t^p}{p}+\frac 12-\frac 1p&\text{ if }t>1,
			\end{cases}
		\end{equation}
		and $c_1>0$ is a constant independent of $x$ (coercivity).\smallskip
	\end{enumerate}
	Assumptions \ref{hyp:W1}--\ref{hyp:W3} are natural conditions in elasticity theory (see, e.g., \cite{Ciarlet,KruzRoub}), assumption \ref{hyp:W4} is the minimal regularity hypothesis needed to perform the linearization, while condition \ref{hyp:W5} is satisfied by a large class of compressible rubber-like materials (see, e.g., \cite{AgoBlassKou,AgoDMDS,MaiPerc1,MaiPerc2,MaiPerc3}).
	
	We note that 
	\begin{equation}\label{eq:propgp}
		g_p(t)\ge \frac 12 (t^2\wedge t^p)\qquad\text{for every }t\ge 0.
	\end{equation}
	Moreover, condition \ref{hyp:W5} implies the following bound:
	\begin{equation}\label{eq:coerc2}
		W(x,F)\ge c\,|\det F-1|^2\quad\text{for a.e.\ }x\in \Omega\text{ and for every }F\in \R^{n\times n} \text{ with }|\det F-1|\le 1,
	\end{equation}
	for a suitable constant $c>0$, independent of $x$. Indeed, by \ref{hyp:W5} the bound \eqref{eq:coerc2} is satisfied for $F$ outside a neighborhood of $SO(n)$.
	If instead $\dist(F;SO(n))$ is small enough, then one has
	\begin{equation*}
		c|\det F-1|^2\leq \,\dist^2(F;SO(n)),
	\end{equation*}
	which implies \eqref{eq:coerc2} by using again \ref{hyp:W5}.
	
	We assume the body to be subjected to a \emph{pressure load}, whose (unscaled) intensity is a Borel measurable function $\pi\colon \R^n\to \R$ such that\smallskip
	\begin{enumerate}[label=\textup{($\pi$\arabic*)}]
		\item\label{hyp:pi1} $\pi$ is Lipschitz continuous in an open set containing $\overline{\mc O}$.\smallskip
	\end{enumerate}
	Since we do not prescribe any Dirichlet boundary condition, the linearization process will naturally select, as in \cite{MaorMora} in the case of dead loads, a particular set of rotations that are ``preferred'' by the force. This set is called the set of \emph{optimal rotations} and in our framework it is defined as
	\begin{equation}\label{eq:optrot}
		\RR:=\argmin\limits_{R\in SO(n)}\left\{\int_\Omega \pi(Rx)\d x\right\}.
	\end{equation} 
	Since the map $R\mapsto \int_\Omega \pi(Rx)\d x$ is continuous and $SO(n)$ is compact, the set of optimal rotations is not empty and is a compact subset of $SO(n)$. 
	For simplicity we assume that 
	\begin{equation}\label{eq:IinR}
		I\in\RR,
	\end{equation}
	where $I$ is the identity matrix. Indeed, if this is not the case, we can always replace $\pi$ by $\pi(R_0\cdot)$ and deformations $y$ by $R_0^Ty$, where $R_0$ is a given optimal rotation.
	
	Let $R_0\in \RR$. By computing the first variation of the functional in \eqref{eq:optrot} along the curve $t\mapsto R_0e^{tA}$ with $A\in \R^{n\times n}_{\rm skew}$,
	we deduce that any optimal rotation $R_0$ satisfies the following Euler-Lagrange equation:
	\begin{equation}\label{eq:EL}
		\int_{\Omega}\nabla\pi (R_0x)\cdot R_0Ax\d x=0\qquad\text{for every }A\in \R^{n\times n}_{\rm skew}.
	\end{equation}
	Applying the Divergence Theorem, condition \eqref{eq:EL} can be rewritten as
	\begin{equation}\label{eq:EL2}
		\int_{\partial\Omega} \pi(R_0x)n_{\partial\Omega}(x)\cdot Ax\d\mc H^{n-1}(x)=0\qquad\text{for every }A\in \R^{n\times n}_{\rm skew},
	\end{equation}
	where $n_{\partial\Omega}$ is the outward unit normal to $\partial\Omega$.

	\section{Nonnegative pressure loads}\label{sec:nonnegative}
	
	We start our analysis by considering a pressure load with nonnegative intensity, that is,\smallskip
	\begin{enumerate}[label=\textup{($\pi$2)}]
		\item\label{hyp:pi2} $\pi(y)\ge 0$ for every $y\in \R^n$.\smallskip
	\end{enumerate}
	This includes, for instance, the relevant case of hydrostatic pressure $\pi(y)=g\rho y_3^-$, where $g$ is the gravitational constant, $\rho$ is the constant density of the fluid, and
	$y_3^-$ denote the negative part of the third component of $y$.
	
	For every $\eps\in(0,1)$ we consider the energy $\EE_\eps\colon \mathring{W}^{1,p}(\Omega;\R^n)\to (-\infty,+\infty]$ defined as
	\begin{equation}\label{eq:energy1}
		\EE_\eps(y):=\begin{cases}\displaystyle
			\int_\Omega W(x,\nabla y(x))\d x+\eps\int_\Omega\left(\pi(y(x))\det\nabla y(x)-\pi(x)\right)\d x&\text{ if }y\in Y^p,\\
			+\infty&\text{ otherwise,}
		\end{cases}
	\end{equation}
	where the set of admissible deformations is
	\begin{equation*}
		Y^p:=\left\{y\in \mathring{W}^{1,p}(\Omega;\R^n):\ \det\nabla y(x)>0 \text{ for a.e.\ }x\in \Omega \right\}.
	\end{equation*} 
	In other words, admissible deformations are orientation preserving and, as it is common in Neumann boundary value problems, have zero average on $\Omega$.
	
	By \eqref{eq:baricenter} any rigid motion of the form 
	\begin{equation}\label{eq:defyR}
		y_R(x):=Rx \quad \text{with }R\in SO(n)
	\end{equation} 
	belongs to $Y^p$. 
	Moreover, under the assumption \ref{hyp:pi2}, the energy is well defined since the two integrands $W(\cdot, \nabla y)$ and
	$\pi(y)\det\nabla y$ are nonnegative for~$y\in Y^p$.
	
	\begin{rmk}\label{rmk:injective}
		Here we do not assume deformations to be injective in any sense. However, one can easily include the requirement that admissible deformations are a.e.\ injective (see, e.g., \cite{Ciarlet}), without affecting the results of the paper, see also Remark~\ref{rmk:injectiveinf}. 
	\end{rmk}
	
	The key ingredient in the proof of compactness is the following variant of the celebrated \emph{rigidity estimate} by Friesecke, James, and M\"uller \cite{FriesJamesMul}, whose proof can be found, e.g., in~\cite[Lemma~3.1]{AgoDMDS}. Similar variants of the rigidity estimates with mixed growth condition have been proved in \cite{CD,MPal,SZ}.
	
	\begin{thm}\label{lemma:prigidity}
		There exists a positive constant $C=C(\Omega,p)>0$ with the following property: for every $y\in W^{1,p}(\Omega;\R^n)$ there exists a constant rotation $R\in SO(n)$ such that
		\begin{equation*}
			\int_\Omega g_p(|\nabla y(x)-R|)\d x\le C\int_\Omega g_p(\dist(\nabla y(x);SO(n)))\d x.
		\end{equation*}
	\end{thm}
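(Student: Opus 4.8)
\emph{Proof strategy.}
The plan is to combine the classical geometric rigidity estimate of Friesecke, James and M\"uller \cite{FriesJamesMul} --- an $L^2$ statement valid for all $H^1$ maps --- with a Lipschitz truncation. The truncation is needed precisely because in the mixed-growth quantity $g_p$ the distance $\dist(\nabla y;SO(n))$ is allowed to be large, with only $L^p$ control, on a set of positive measure, while \cite{FriesJamesMul} requires square-integrability. Throughout set $e(x):=\dist(\nabla y(x);SO(n))$ and record the elementary facts, all immediate from \eqref{eq:gp}: $g_p$ is nondecreasing, satisfies a doubling ($\Delta_2$) estimate $g_p(2t)\le C g_p(t)$ --- hence $g_p(s+t)\le C\big(g_p(s)+g_p(t)\big)$ --- and $g_p(t)\le\tfrac12 t^2$ for every $t\ge0$; moreover $\big||F|-\sqrt n\big|\le\dist(F;SO(n))\le|F|+\sqrt n$ for every $F\in\R^{n\times n}$, since every rotation has Frobenius norm $\sqrt n$. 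We may assume $\int_\Omega g_p(e)\d x<+\infty$, otherwise there is nothing to prove.

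\emph{Truncation.}
I would first fix a truncation level $\lambda>0$ large enough that $c\lambda\ge1+\sqrt n$ (with $c$ the absolute constant below) and apply a Lipschitz truncation, performed intrinsically on the Lipschitz domain $\Omega$ via a Whitney decomposition and a restricted maximal operator, so that no Sobolev extension is involved. This produces $\tilde y\in W^{1,\infty}(\Omega;\R^n)$ and a set $B\subset\Omega$ with $\nabla\tilde y=\nabla y$ a.e.\ on $\Omega\setminus B$, $\|\nabla\tilde y\|_\infty\le C\lambda$, $|\nabla y|\le c\lambda$ a.e.\ on $\Omega\setminus B$, and
\begin{equation*}
	|B|\le\frac{C}{\lambda^p}\int_{\Omega\cap\{|\nabla y|>c\lambda\}}|\nabla y|^p\d x .
\end{equation*}
On $\{|\nabla y|>c\lambda\}$ one has $e\ge|\nabla y|-\sqrt n>1$, hence $|\nabla y|\le(1+\sqrt n)e$, and $e^p\le 2g_p(e)$ when $e>1$ by \eqref{eq:propgp}; therefore $|B|\le C\int_\Omega g_p(e)\d x$. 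Note also that $e\le c\lambda+\sqrt n$ a.e.\ on $\Omega\setminus B$ and $\dist(\nabla\tilde y;SO(n))\le\|\nabla\tilde y\|_\infty+\sqrt n\le C\lambda+\sqrt n$ a.e.\ on $\Omega$.

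\emph{Rigidity for $\tilde y$ and transfer back.}
Since $\Omega$ is bounded and $\tilde y\in W^{1,\infty}(\Omega;\R^n)\subset H^1(\Omega;\R^n)$, \cite{FriesJamesMul} yields $R\in SO(n)$ with $\int_\Omega|\nabla\tilde y-R|^2\d x\le C(\Omega)\int_\Omega\dist^2(\nabla\tilde y;SO(n))\d x$. I would bound the right-hand side by splitting over $B$ and $\Omega\setminus B$: on $B$, $\dist^2(\nabla\tilde y;SO(n))\le(C\lambda+\sqrt n)^2$, so that contribution is $\le C|B|\le C\int_\Omega g_p(e)\d x$; on $\Omega\setminus B$, $\dist(\nabla\tilde y;SO(n))=e$ with $e$ bounded, so $\int_{\Omega\setminus B}e^2\d x\le\int_{\{e\le1\}}e^2\d x+C\int_{\{e>1\}}e^p\d x\le C\int_\Omega g_p(e)\d x$, using $g_p(e)=\tfrac12 e^2$ on $\{e\le1\}$ and $e^p\le 2g_p(e)$ on $\{e>1\}$. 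Hence $\int_\Omega|\nabla\tilde y-R|^2\d x\le C\int_\Omega g_p(e)\d x$, and since $g_p(t)\le\tfrac12 t^2$ also $\int_\Omega g_p(|\nabla\tilde y-R|)\d x\le C\int_\Omega g_p(e)\d x$. To conclude, since $\nabla y=\nabla\tilde y$ on $\Omega\setminus B$ it only remains to estimate $\int_B g_p(|\nabla y-R|)\d x$: on $B$, $|\nabla y-R|\le|\nabla y|+\sqrt n\le e+2\sqrt n$, so the doubling property gives $g_p(|\nabla y-R|)\le C g_p(e)+C$, and integrating over $B$ (using once more $|B|\le C\int_\Omega g_p(e)\d x$) gives $\int_B g_p(|\nabla y-R|)\d x\le C\int_\Omega g_p(e)\d x$. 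Adding the two contributions proves the estimate with $C=C(\Omega,p)$.

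\emph{Main obstacle.}
The delicate point is the choice of $\lambda$ in the truncation: the $L^\infty$ bound on $\nabla\tilde y$ is unavoidably of order $\lambda$ and reappears, multiplied by $|B|$, in the rigidity estimate, so one needs $|B|$ to carry a favorable power of $\lambda$. This is exactly what happens, because only the super-quadratic region $\{e>1\}$ --- on which $g_p$ controls the full $L^p$ mass --- feeds into $B$; in fact any fixed $\lambda$ with $c\lambda\ge1+\sqrt n$ works. A secondary technical issue is to run the Lipschitz truncation directly on $\Omega$, since first extending $y$ to $\R^n$ would produce gradient mass outside $\Omega$ not controlled by $\int_\Omega g_p(e)\d x$.
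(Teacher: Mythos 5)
Your argument is correct, and it is essentially the argument by which this kind of mixed-growth rigidity estimate is proved in the literature --- in particular in \cite[Lemma~3.1]{AgoDMDS}, which is the reference the paper cites for the proof, and in the earlier works on mixed-growth rigidity \cite{CD,MPal,SZ}: truncate at a fixed level $\lambda=\lambda(n,\Omega)$, apply the $L^2$-rigidity of \cite{FriesJamesMul} to the Lipschitz truncation, and transfer back using that the bad set has measure $|B|\le C\int_\Omega g_p(\dist(\nabla y;SO(n)))\d x$. You also correctly identified the one genuine technical point: extending $y$ to $\R^n$ before truncating would bring in gradient mass outside $\Omega$ which is not controlled by $\int_\Omega g_p(\dist(\nabla y;SO(n)))\d x$, so the truncation must be carried out intrinsically on the Lipschitz domain; with the refined measure bound localized to $\Omega\cap\{|\nabla y|>c\lambda\}$ everything closes, since on that set $\dist(\nabla y;SO(n))>1$ and $|\nabla y|\le(1+\sqrt n)\dist(\nabla y;SO(n))$.

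It is worth noting that the paper already records Theorem~\ref{lemma:rigidity2} (the mixed-growth rigidity of \cite{CDM}), from which Theorem~\ref{lemma:prigidity} follows with no truncation at all, at least for $p<2$ (for $p=2$ one has $g_2(t)=t^2/2$ and the statement is exactly \cite{FriesJamesMul}). Set $e:=\dist(\nabla y;SO(n))$ and decompose $e=f_1+f_2$ with $f_1:=e\chi_{\{e>1\}}$, $f_2:=e\chi_{\{e\le 1\}}$; by \eqref{eq:propgp} one has $\|f_1\|_p^p\le 2\int_\Omega g_p(e)\d x$ and $\|f_2\|_2^2\le 2\int_\Omega g_p(e)\d x$. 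Theorem~\ref{lemma:rigidity2} with $p_1=p$, $p_2=2$ gives $\widetilde R\in SO(n)$ and $\nabla y-\widetilde R=g_1+g_2$ with $\|g_i\|_{p_i}\le C\|f_i\|_{p_i}$. Then, using the doubling property of $g_p$ together with $g_p(t)\le t^p/p$ and $g_p(t)\le t^2/2$,
\begin{equation*}
\int_\Omega g_p(|\nabla y-\widetilde R|)\d x \le C\int_\Omega \big(g_p(|g_1|)+g_p(|g_2|)\big)\d x
\le C\big(\|g_1\|_p^p+\|g_2\|_2^2\big)\le C\int_\Omega g_p(e)\d x .
\end{equation*}
Your route has the advantage of resting only on the classical \cite{FriesJamesMul} plus a truncation lemma, whereas the shortcut leans on the heavier \cite{CDM}; but the latter makes the logical relation between Theorem~\ref{lemma:prigidity} and Theorem~\ref{lemma:rigidity2}, both used in the paper, completely transparent.
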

	
	The following generalized rigidity estimate will be used in Theorem~\ref{thm:convmin} to infer strong convergence of almost minimizers.
	For a proof we refer to~\cite[Theorem~1.1]{CDM}.
	
	\begin{thm}\label{lemma:rigidity2}
		Let $1<p_1<p_2<\infty$. Then there exists a positive constant $C=C(\Omega,p_1,p_2)>0$ with the following property: for every $y\in W^{1,1}(\Omega;\R^n)$ with 
		\begin{equation*}
			\dist(\nabla y;SO(n))=f_1+f_2\qquad\text{for some }f_i\in L^{p_i}(\Omega),\,\, i=1,2,
		\end{equation*}
		there exist a constant rotation $\widetilde{R}\in SO(n)$ and two functions $g_i\in  L^{p_i}(\Omega)$ such that
		\begin{equation*}
			\nabla y=\widetilde{R}+g_1+g_2\qquad\text{ and }\qquad\|g_i\|_{p_i}\le C\|f_i\|_{p_i},\,\, i=1,2.
		\end{equation*}
	\end{thm}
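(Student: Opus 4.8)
The statement is a mixed-norm ($L^{p_1}$--$L^{p_2}$) refinement of the geometric rigidity estimate, and the plan is to derive it by localizing the classical rigidity estimate, linearizing on the region where the gradient is close to $SO(n)$ so as to reduce to a mixed-growth Korn inequality, and finally patching the local rotations together. I would begin with some harmless reductions: replacing $f_1$ by $(f_1\vee 0)\wedge(f_1+f_2)$ and $f_2$ by the complementary term, which leaves $f_1+f_2$ unchanged and does not increase $\|f_1\|_{p_1}$ or $\|f_2\|_{p_2}$, one may assume $0\le f_i\le f_1+f_2=\dist(\nabla y;SO(n))$; by homogeneity one may assume $\|f_1\|_{p_1}+\|f_2\|_{p_2}=1$; and since $\partial\Omega$ is Lipschitz, a finite covering by charts together with a Sobolev extension reduces the estimate to the case in which $\Omega$ is a cube. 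Then, applying the classical geometric rigidity estimate in $L^{p_1}$ and using $L^{p_2}(\Omega)\hookrightarrow L^{p_1}(\Omega)$, one obtains $\widehat R\in SO(n)$ with $\|\nabla y-\widehat R\|_{p_1}\le C\|f_1+f_2\|_{p_1}\le C(\|f_1\|_{p_1}+\|f_2\|_{p_2})$; after replacing $y$ by $\widehat R^Ty$, which changes neither $\dist(\nabla y;SO(n))$ nor the norms of $f_1,f_2$, it remains to produce $R'\in SO(n)$ and $g_i\in L^{p_i}$ with $\nabla y=R'+g_1+g_2$ and $\|g_i\|_{p_i}\le C\|f_i\|_{p_i}$.

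I would then localize by a Calder\'on--Zygmund / Whitney stopping-time argument run simultaneously on $f_1$ (at its own exponent $p_1$) and on $f_2$ (at $p_2$), at a small fixed level $\lambda$: this produces a pairwise disjoint family of dyadic subcubes $\{Q_j\}$, with controlled overlap of their dilates, such that on the good set $G:=\Omega\setminus\bigcup_jQ_j$ both $f_1$ and $f_2$, hence $\dist(\nabla y;SO(n))$, are bounded by $C\lambda$ almost everywhere. On $G$ the gradient $\nabla y$ then takes values in a fixed small neighborhood of $SO(n)$ on which the nearest-rotation projection is smooth, so the usual linearization argument reduces the problem on $G$ to Korn's inequality — the symmetric part of $\nabla y-I$ being comparable, up to a quadratic error reabsorbable for $\lambda$ small, to the strain, which splits as $f_1|_G+f_2|_G$ — and here I would invoke the \emph{mixed-growth Korn inequality} to conclude that $\nabla y-I=A+h_1+h_2$ on $G$ with $A$ a single skew-symmetric matrix and $\|h_i\|_{L^{p_i}(G)}\le C\|f_i\|_{p_i}$. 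That linear statement is the model case of the theorem and is itself proved by applying, separately to $f_1$ and to $f_2$, the Calder\'on--Zygmund operator reconstructing a gradient field from its symmetric part modulo constants and skew-symmetric matrices, which is bounded on $L^p$ for every $1<p<\infty$. On each bad cube $Q_j$ I would instead apply the classical $L^{p_1}$ rigidity estimate, and the $L^{p_2}$ one when the local distance permits, to get a rotation $R_j$, together with the standard chain estimate: $R_j-R_k$, and $R_j$ versus the rotation induced by $A$, are controlled by the distance averaged over a chain of overlapping cubes, which stays of order $\lambda$.

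It then remains to patch. I would take $R'$ to be the rotation coming from the good set, so that all the $R_j$ lie within $C\lambda$ of it. On $G$ the pieces $h_1,h_2$ already give the required splitting. On the bad cubes one apportions $\nabla y-R'=(\nabla y-R_j)+(R_j-R')$ according to which of $f_1,f_2$ carries the stopping-time mass on $Q_j$: cubes of ``type $1$'' send their contribution into $g_1$, estimated through $\|\dist(\nabla y;SO(n))\|_{L^{p_1}(Q_j)}$ and the disjointness of the $Q_j$; cubes of ``type $2$'' send theirs into $g_2$, estimated through the local $L^{p_2}$ rigidity and $\|f_2\|_{L^{p_2}(Q_j)}$, using that on these cubes $f_1$ is below $\lambda$ by construction, so that $\|f_1\|_{L^{p_2}(Q_j)}$, summed, is reabsorbed into $\|f_1\|_{p_1}$ by H\"older on the small-measure stopping region. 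Collecting the good-set and bad-cube contributions then yields $\|g_1\|_{p_1}\le C\|f_1\|_{p_1}$ and $\|g_2\|_{p_2}\le C\|f_2\|_{p_2}$.

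I expect the patching-and-apportioning step to be the main obstacle. In the single-exponent (dead-load) situation one only needs a bound by $\|f_1\|_{p_1}+\|f_2\|_{p_2}$, whereas here the two estimates must stay \emph{decoupled} — $g_1$ controlled by $\|f_1\|_{p_1}$ alone and $g_2$ by $\|f_2\|_{p_2}$ alone — and it is this decoupling that forces the stopping time to be run simultaneously at the two exponents and makes the bad-cube bookkeeping delicate: every gluing error must be charged to the exponent that genuinely accounts for it, and the cross terms $\|f_1\|_{L^{p_2}}$ and $\|f_2\|_{L^{p_1}}$ over the stopping region must be shown to be absorbed by the correct norm. A further, more technical point is carrying out the reduction from a general Lipschitz domain to a cube, and regularizing the good set $G$ (which need not be a John domain) so that the Korn step applies to it, while keeping every estimate linear in $\|f_1\|_{p_1}$ and $\|f_2\|_{p_2}$.
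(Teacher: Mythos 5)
The paper does not prove this statement: it is restated from Conti, Dolzmann, and M\"uller, and the reader is referred to \cite[Theorem~1.1]{CDM} for the proof. Your sketch must therefore be measured against that argument, and against it there are two substantive gaps, both of which you partly flag but neither of which is merely ``technical''.

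First, the step you call ``a further, more technical point'' is in fact the crux, and your proposed route does not close it. You want to apply the mixed-growth Korn inequality on the good set $G=\Omega\setminus\bigcup_j Q_j$; but $G$ is not a Lipschitz (let alone John or extension) domain, its geometry depends on the function $y$ and the level $\lambda$, and there is no Korn inequality on it with a constant independent of the decomposition. Regularizing $G$ while keeping the two exponents decoupled is precisely where the proof lives. The CDM argument avoids this entirely: after normalizing by the classical $L^{p_1}$-rigidity rotation, one \emph{truncates} $y$ to a globally Lipschitz map $w$ with $\|\nabla w\|_\infty\le C\lambda$ agreeing with $y$ off a small exceptional set controlled by the level sets of $f_1$ and $f_2$; one then linearizes and applies the mixed-growth Korn inequality to $w$ on the \emph{whole} cube, and estimates $\nabla y-\nabla w$ through the truncation bounds, charging each piece of the error to the $L^{p_i}$-norm of the corresponding $f_i$. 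The linear Korn step is never run on an irregular set, and the decoupling falls out of the level-set structure of the truncation rather than from a cube-by-cube apportioning.

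Second, the ``type $1$ / type $2$'' bookkeeping on the bad cubes does not work as stated. Running the stopping time separately for $f_1$ (at exponent $p_1$) and for $f_2$ (at exponent $p_2$) produces two different, overlapping families of stopping cubes, so a cube can be bad for both; on a cube that is bad for $f_1$ the assertion ``$f_1$ is below $\lambda$ by construction'', which you use to reabsorb $\|f_1\|_{L^{p_2}(Q_j)}$, fails. Worse, on a cube you label ``type $2$'' the local $L^{p_2}$ rigidity estimate necessarily controls $\nabla y-R_j$ by $\|\dist(\nabla y;SO(n))\|_{L^{p_2}(Q_j)}$, which contains a $\|f_1\|_{L^{p_2}(Q_j)}$ term that in general has no useful relation to $\|f_1\|_{L^{p_1}}$; the cross-term cannot simply be ``absorbed by H\"older on the small-measure stopping region'' without an additional structural input. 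This decoupling of the two norms --- $g_1$ by $\|f_1\|_{p_1}$ alone and $g_2$ by $\|f_2\|_{p_2}$ alone --- is exactly the content of the theorem, and in CDM it is obtained from the truncation/level-set machinery, not from a CZ-cube apportioning. As written, your patching step would need a new mechanism to produce it.
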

	
	Our arguments will strongly rely on the Lipschitz continuity of the pressure function $\pi$. This, however, holds only in a suitable set $\Omega'$ containing $\Omega$ (see \ref{hyp:pi1}). Since deformations $y\in Y^p$ may be a priori valued outside $\Omega'$, it is convenient to introduce an auxiliary Lipschitz continuous function that coincides with $\pi$ in $\Omega'$ and is bounded on the whole of $\R^n$. This is the content of the following lemma, which is clearly not necessary if $\pi$ itself is Lipschitz continuous and bounded.
	
	\begin{lemma}\label{lemma:hatpi1}
		Assume \ref{hyp:pi1} and \ref{hyp:pi2}.
		Then there exists a Lipschitz continuous function $\hat\pi\colon \R^n\to [0,+\infty)$, with compact support, such that $\hat\pi$ coincides with $\pi$ in an open neighborhood of $\overline{\mc O}$ and 
		$\hat\pi(y)\le\pi(y)$ for all $y\in \R^n$. In particular, $\hat{\pi}$ is bounded.	
		\end{lemma}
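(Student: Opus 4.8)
The plan is a routine cutoff-and-truncation construction. First I would record that $\overline{\mc O}$ is compact: since rotations are isometries, $\mc O\subseteq B_r$ with $r:=\sup_{x\in\Omega}|x|$, so $\overline{\mc O}$ is closed and bounded. Let $\Omega'$ be the open set from \ref{hyp:pi1} on which $\pi$ is Lipschitz, with $\overline{\mc O}\subset\Omega'$. By compactness of $\overline{\mc O}$ there is $\delta>0$ such that the closed $\delta$-neighbourhood $K:=\{x\in\R^n:\dist(x;\overline{\mc O})\le\delta\}$ is still contained in $\Omega'$; this $K$ is compact. On $K$ the function $\pi$ is continuous and nonnegative, hence $0\le\pi\le M$ on $K$ for some $M\ge0$, and it is Lipschitz, say with constant $L$, being the restriction of the Lipschitz function $\pi|_{\Omega'}$.

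Next I would produce a bounded, globally Lipschitz function agreeing with $\pi$ on $K$: by the McShane extension formula, $g(x):=\sup_{z\in K}\big(\pi(z)-L|x-z|\big)$ is $L$-Lipschitz on $\R^n$ and equals $\pi$ on $K$; then $\tilde\pi:=0\vee(g\wedge M)$ is still $L$-Lipschitz (truncation is $1$-Lipschitz), satisfies $0\le\tilde\pi\le M$, and $\tilde\pi=\pi$ on $K$. I would then introduce the cutoff
\begin{equation*}
	\varphi(x):=0\vee\Big(1\wedge\tfrac{4}{\delta}\big(\tfrac{\delta}{2}-\dist(x;\overline{\mc O})\big)\Big),
\end{equation*}
which is Lipschitz (composition of the $1$-Lipschitz map $x\mapsto\dist(x;\overline{\mc O})$ with a Lipschitz function of one real variable), takes values in $[0,1]$, equals $1$ on the open neighbourhood $U:=\{x\in\R^n:\dist(x;\overline{\mc O})<\delta/4\}$ of $\overline{\mc O}$, and satisfies $\operatorname{supp}\varphi\subseteq\{\dist(\cdot;\overline{\mc O})\le\delta/2\}\subset K$. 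Finally I would set $\hat\pi:=\varphi\,\tilde\pi$ on $\R^n$.

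It then remains only to verify the listed properties. Since $\varphi$ and $\tilde\pi$ are both bounded and Lipschitz on $\R^n$, their product $\hat\pi$ is Lipschitz on $\R^n$; $\hat\pi\ge0$ because both factors are nonnegative; $\operatorname{supp}\hat\pi\subseteq\operatorname{supp}\varphi$ is compact, and $0\le\hat\pi\le M$, so $\hat\pi$ is bounded. On $U$ one has $\varphi\equiv1$ and, since $U\subseteq K$, also $\tilde\pi=\pi$, whence $\hat\pi=\pi$ on $U$, giving the required coincidence on an open neighbourhood of $\overline{\mc O}$. For the inequality $\hat\pi\le\pi$: if $y\in K$ then $\hat\pi(y)=\varphi(y)\pi(y)\le\pi(y)$ because $0\le\varphi\le1$ and $\pi(y)\ge0$; if $y\notin K$ then $\dist(y;\overline{\mc O})>\delta/2$, so $\varphi(y)=0$ and $\hat\pi(y)=0\le\pi(y)$ by \ref{hyp:pi2}. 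The construction presents no real difficulty; the only points that need a little care are the use of compactness of $\overline{\mc O}$ to fit an entire neighbourhood of it inside the set $\Omega'$ where $\pi$ is known to be Lipschitz, and the (standard) fact that a product of bounded Lipschitz functions is Lipschitz, which is precisely what upgrades $\varphi\tilde\pi$ to a globally Lipschitz function.
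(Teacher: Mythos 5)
Your proof is correct, and it takes a genuinely different route from the paper's. The paper proceeds concretely from the annular geometry of $\mc O$: it fixes radii $r_1<r_2$ and a margin $\delta$ so that $\overline{\mc O}\subset B_{r_2}\setminus\overline{B_{r_1}}\subset\overline{B_{r_2+\delta}}\setminus B_{r_1-\delta}\subset\Omega'$, defines $\widetilde\pi$ on the larger annulus by radially projecting to $\partial B_{r_1}\cup\partial B_{r_2}$ and subtracting a linear ramp with slope $K:=L\vee M/\delta$, checks that this slope is steep enough that $\widetilde\pi\le\pi$ (from $K\ge L$) and that $\widetilde\pi\le 0$ on the outer boundary of the larger annulus (from $K\ge M/\delta$), and finally sets $\hat\pi:=\widetilde\pi\vee 0$ on the annulus and $0$ elsewhere. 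You instead use a McShane extension of $\pi|_K$ truncated to $[0,M]$, multiplied by a Lipschitz cutoff $\varphi$ supported inside $K$; the inequality $\hat\pi\le\pi$ then comes for free from $\varphi\le 1$, $\tilde\pi=\pi$ on $K$, and \ref{hyp:pi2} outside $K$. The trade-off is essentially explicitness versus generality: the paper's radial construction is entirely elementary and self-contained, at the cost of a case split on whether $0\in\Omega$ and a slightly delicate choice of the decay rate $K$; your construction is geometry-agnostic, avoids the case distinction, and reduces the $\hat\pi\le\pi$ check to a one-line observation, but it invokes the McShane formula and the bounded-Lipschitz-product fact as black boxes. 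Both uses of \ref{hyp:pi2} are essential (the paper needs $\pi\ge 0$ for $\widetilde\pi\vee 0\le\pi$ off the annulus; you need it both for $\varphi\pi\le\pi$ on $K$ and for $0\le\pi$ off $K$), so neither argument extends as-is to signed $\pi$, which is exactly why the paper later proves Lemma~\ref{lemma:hatpi2} separately.
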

	
	\begin{rmk}
		Note that the set of optimal rotations \eqref{eq:optrot} stays the same if $\pi$ is replaced by $\hat\pi$, since $\pi$ and $\hat\pi$ coincide in a neighborhood of $\overline{\mc O}$.
	\end{rmk}
	
	\begin{proof}[Proof of Lemma~\ref{lemma:hatpi1}]
		Assume $0\not\in\Omega$, so that $\mathcal O$ is an open annulus centered at $0$ (the case where $0\in\Omega$ can be treated similarly).
		Let $0<r_1<r_2$ and $0<\delta<r_1$ be such that $\overline{\mc O}\subset B_{r_2}\setminus\overline{B_{r_1}}$ and $\pi$ is Lipschitz continuous in $\overline{B_{r_2+\delta}}\setminus B_{r_1-\delta}$ with Lipschitz constant $L$.
		Let $M\geq0$ be the maximum of $\pi$ on $\partial B_{r_1}\cup\partial B_{r_2}$.
		We first define $\widetilde{\pi}: \overline{B_{r_2+\delta}}\setminus B_{r_1-\delta}\to\R$ as
		\begin{equation*}
			\widetilde{\pi}(y):=\begin{cases}
				\pi\left(r_1\frac{y}{|y|}\right)-K(r_1-|y|) & \text{ if }r_1-\delta\leq |y|<r_1,\smallskip\\
				\pi(y) & \text{ if } r_1\leq |y|<r_2,\smallskip\\
				\pi\left(r_2\frac{y}{|y|}\right)-K(|y|-r_2) & \text{ if } r_2\leq |y|\leq r_2+\delta,
			\end{cases}
		\end{equation*}
		where $K:=L\vee\frac{M}{\delta}$.
		It is easy to see that $\widetilde{\pi}$ is Lipschitz continuous in its domain; moreover, by construction it coincides with $\pi$ in an open neighborhood of $\overline{\mc O}$. We now show  that $\widetilde{\pi}\le \pi$ on $\overline{B_{r_2+\delta}}\setminus B_{r_1-\delta}$.
		Indeed, if $r_1-\delta\leq |y|<r_1$, by the Lipschitz continuity of $\pi$ we have
		\begin{equation*}
			\widetilde{\pi}(y)\le \pi(y)+L\left|r_1 \frac{y}{|y|}-y\right|-K(r_1-|y|)=\pi(y)-(K-L)(r_1-|y|)\le \pi(y),
		\end{equation*}
		and similarly if $r_2\leq |y|\leq r_2+\delta$. Finally, we note that, if $y\in\partial  B_{r_1-\delta}\cup\partial B_{r_2+\delta}$, then
		\begin{equation*}
			\widetilde{\pi}(y)\le M-K\delta\le 0.
		\end{equation*}
		We now conclude by considering $\hat \pi(y):=\widetilde{\pi}(y)\vee 0$ for $y\in \overline{B_{r_2+\delta}}\setminus B_{r_1-\delta}$, $\hat \pi(y):=0$ otherwise in~$\R^n$.
	\end{proof}
	
	We are now in a position to state and prove some estimates which will be crucial to infer compactness of deformations with equibounded (rescaled) energy.
	
	\begin{lemma}\label{lemma:est1}
		Assume \ref{hyp:W1}--\ref{hyp:W5}, \ref{hyp:pi1}, and \ref{hyp:pi2}. If $\EE_\eps(y_\eps)\le C\eps^2$ for every $\eps\in(0,1)$, then there holds
		\begin{equation}\label{eq:estdet}
			\int_{\{|\det\nabla y_\eps-1|\le 1\}}|\det\nabla y_\eps(x)-1|^2\d x\le C\eps^2 \quad \text{for every }\eps\in(0,1).
		\end{equation}
		Furthermore, there exist constant rotations $R_\eps\in SO(n)$ such that the rescaled displacements $u_\eps\in \mathring{W}^{1,p}(\Omega;\R^n)$ defined by
		\begin{equation}\label{eq:displacement}
			u_\eps:=\frac 1\eps R_\eps^T(y_\eps-y_{R_\eps})
		\end{equation}
		(see \eqref{eq:defyR} for the definition of $y_{R_\eps}$) satisfy
		\begin{equation}\label{eq:estgp}
			\int_{\Omega}g_p(\eps|\nabla u_\eps(x)|)\d x\le C\eps^2  \quad \text{for every }\eps\in(0,1)
		\end{equation}
		and are uniformly bounded in $W^{1,p}(\Omega;\R^n)$.
		
		If, moreover, $(R'_\eps)\subset SO(n)$ is another sequence for which the rescaled displacements, defined as in \eqref{eq:displacement}, satisfy \eqref{eq:estgp}, then
		\begin{equation}\label{eq:estuniq}
			|R_\eps-R'_\eps|\le C\eps
		\end{equation} 
		for every $\eps\in(0,1)$.
	\end{lemma}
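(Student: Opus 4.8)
The plan is to first establish the determinant estimate \eqref{eq:estdet}, then use it together with coercivity \ref{hyp:W5} to bound the elastic energy by $C\eps^2$, and finally invoke the rigidity estimate Theorem~\ref{lemma:prigidity} to produce the rotations $R_\eps$ and deduce \eqref{eq:estgp} and the uniform $W^{1,p}$ bound. For the determinant estimate, I would start from the hypothesis $\EE_\eps(y_\eps)\le C\eps^2$. Since both $W(\cdot,\nabla y_\eps)$ and $\pi(y_\eps)\det\nabla y_\eps$ are nonnegative on $Y^p$, the only term that can be negative in $\EE_\eps$ is $-\eps\int_\Omega\pi(x)\d x$, which is a fixed constant times $\eps$; this alone is not enough, so the key is to control the force term $\eps\int_\Omega(\pi(y_\eps)\det\nabla y_\eps-\pi(x))\d x$ from below by something like $-C\eps$ times an integral of $\det\nabla y_\eps$, and then absorb. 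Concretely, I would split $\Omega$ into the region where $|\det\nabla y_\eps-1|\le 1$ and its complement. On the bad region, nonnegativity of $\pi$ and of $\det\nabla y_\eps$ makes the force contribution $\ge -\eps\int_\Omega\pi(x)\d x \ge -C\eps$, while \ref{hyp:W5} together with \eqref{eq:propgp} gives $W(x,\nabla y_\eps)\gtrsim |\nabla y_\eps|^p$ there (since $\det$ far from $1$ forces $\nabla y_\eps$ far from $SO(n)$), and in particular $W(x,\nabla y_\eps)\gtrsim |\det\nabla y_\eps-1|$ on that set. On the good region I use \eqref{eq:coerc2}: $W(x,\nabla y_\eps)\ge c|\det\nabla y_\eps-1|^2$. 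Writing $\int_\Omega\pi(y_\eps)\det\nabla y_\eps-\pi(x)\d x$ and using that $\hat\pi$ (from Lemma~\ref{lemma:hatpi1}) is Lipschitz and bounded, $\pi\ge\hat\pi$, and $\hat\pi(y_\eps)\le\hat\pi(x)+L|y_\eps-x|$, I would try to bound $-\eps\int_\Omega(\pi(y_\eps)\det\nabla y_\eps-\pi(x))\d x$ from above by $C\eps\int_\Omega|\det\nabla y_\eps-1|\d x + C\eps\int_\Omega|\det\nabla y_\eps||y_\eps-x|\d x + C\eps$; the first integral is handled by Young's inequality ($\eps|\det-1|\le \tfrac12|\det-1|^2 + \tfrac{\eps^2}{2}$ on the good set, and by $W$ itself on the bad set), the last term is $O(\eps)$ hence $O(\eps^2)$ after comparing with the total budget... actually here is the subtlety: the budget is $C\eps^2$, not $C\eps$, so the $O(\eps)$ linear-in-$\eps$ terms must ultimately cancel against $-\eps\int\pi(x)\d x$ and the positivity of $\pi(y_\eps)\det\nabla y_\eps$. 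This cancellation is exactly where one uses that $\pi\ge 0$: $\eps\int_\Omega\pi(y_\eps)\det\nabla y_\eps\d x\ge 0$, so $\EE_\eps(y_\eps)\ge \int_\Omega W(x,\nabla y_\eps)\d x - \eps\int_\Omega\pi(x)\d x$, and combined with the upper bound $C\eps^2$ this already gives $\int_\Omega W\le C\eps$. Bootstrapping: once $\int_\Omega W\le C\eps$, one gets from \eqref{eq:propgp} that $\dist(\nabla y_\eps;SO(n))$ is small in $L^p$, hence $y_\eps$ is close to a rotation, hence $\det\nabla y_\eps$ is close to $1$ in measure, and then a more careful expansion of $\pi(y_\eps)\det\nabla y_\eps-\pi(x)$ around a rotation $R_\eps x$ yields the improved $C\eps^2$ bound on $\int_\Omega W$, and in particular \eqref{eq:estdet} via \eqref{eq:coerc2}.

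Once the $\eps^2$-bound on the elastic energy is in hand, the rest is essentially mechanical. By \ref{hyp:W5} and \eqref{eq:propgp} we have $\int_\Omega g_p(\dist(\nabla y_\eps;SO(n)))\d x\le c_1^{-1}\int_\Omega W(x,\nabla y_\eps)\d x\le C\eps^2$, so Theorem~\ref{lemma:prigidity} yields rotations $R_\eps\in SO(n)$ with $\int_\Omega g_p(|\nabla y_\eps-R_\eps|)\d x\le C\eps^2$. Defining $u_\eps$ by \eqref{eq:displacement} we have $\eps\nabla u_\eps = R_\eps^T(\nabla y_\eps - R_\eps)$, so $|\eps\nabla u_\eps| = |\nabla y_\eps - R_\eps|$ pointwise, giving \eqref{eq:estgp}. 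For the uniform $W^{1,p}$ bound: on $\{|\nabla u_\eps|\le 1/\eps\}$ one has $g_p(\eps|\nabla u_\eps|)\ge \tfrac12\eps^2|\nabla u_\eps|^2\ge \tfrac12\eps^2|\nabla u_\eps|^p$ (for $\eps|\nabla u_\eps|\le 1$), and on $\{|\nabla u_\eps|> 1/\eps\}$ one has $g_p(\eps|\nabla u_\eps|)\ge \tfrac1p\eps^p|\nabla u_\eps|^p + (\tfrac12-\tfrac1p)$, so in either case $\eps^p|\nabla u_\eps|^p \le C g_p(\eps|\nabla u_\eps|) + C\eps^p \mathbf{1}_{\{|\nabla u_\eps|>1/\eps\}}$; integrating, using \eqref{eq:estgp} and $|\{|\nabla u_\eps|>1/\eps\}|\le \eps^p\int|\nabla u_\eps|^p$ gives, after absorbing, $\eps^p\int_\Omega|\nabla u_\eps|^p\d x\le C\eps^2$, i.e.\ $\int_\Omega|\nabla u_\eps|^p\d x\le C\eps^{2-p}\le C$ since $p\le 2$. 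The zero-average condition and Poincaré then bound $\|u_\eps\|_{W^{1,p}}$ uniformly.

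For the last assertion \eqref{eq:estuniq}, suppose $(R'_\eps)$ is another sequence whose associated displacements $u'_\eps = \tfrac1\eps (R'_\eps)^T(y_\eps - R'_\eps\,\cdot)$ satisfy \eqref{eq:estgp}, i.e.\ $\int_\Omega g_p(|\nabla y_\eps - R'_\eps|)\d x\le C\eps^2$ as well. Then by the triangle inequality and convexity/monotonicity of $g_p$ (or directly: $|R_\eps - R'_\eps|\le |\nabla y_\eps - R_\eps| + |\nabla y_\eps - R'_\eps|$ pointwise), integrating over $\Omega$ and using that on the set where the right-hand side is $\le 2$ we have $g_p(t)\ge t^2/2\gtrsim t^2$, while controlling the measure of the complementary set as above, we get $|\Omega|\,|R_\eps - R'_\eps|^2 \le C\big(\int_\Omega g_p(|\nabla y_\eps - R_\eps|) + g_p(|\nabla y_\eps - R'_\eps|)\d x + \text{(small measure term)}\big)\le C\eps^2$, whence $|R_\eps - R'_\eps|\le C\eps$. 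The main obstacle in the whole proof is unquestionably the first step: turning the crude energy budget $\EE_\eps(y_\eps)\le C\eps^2$ into the quantitative estimate \eqref{eq:estdet}, because the force term is genuinely nonlinear in $\nabla y_\eps$ through the determinant and is not a continuous perturbation of the elastic energy in the $W^{1,p}$ topology; the argument must exploit crucially the sign condition \ref{hyp:pi2} to discard the term $\eps\int_\Omega\pi(y_\eps)\det\nabla y_\eps\d x\ge 0$, and then bootstrap from an $O(\eps)$ bound to the sharp $O(\eps^2)$ bound using the near-rigidity that the first bound already provides.
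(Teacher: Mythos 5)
Your overall skeleton (rigidity estimate for $R_\eps$, splitting $\{|\det\nabla y_\eps-1|\le1\}$ from its complement, exploiting $\pi\ge0$, Poincar\'e--Wirtinger to close the $W^{1,p}$ bound, triangle inequality for \eqref{eq:estuniq}) agrees with the paper, but the heart of the argument---getting the $\eps^2$-bound on $\int_\Omega W$---has two genuine gaps.

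First, the bound you invoke on the bad set, $W(x,\nabla y_\eps)\gtrsim|\det\nabla y_\eps-1|$ on $\{|\det\nabla y_\eps-1|>1\}$, is false in general. From \ref{hyp:W5} one only gets $W\gtrsim\dist^p(\nabla y_\eps;SO(n))\approx|\nabla y_\eps|^p$ for $|\nabla y_\eps|$ large, while $|\det\nabla y_\eps-1|$ can grow like $|\nabla y_\eps|^n$. Since $p\le2$, this comparison fails for every $n\ge3$ (e.g., $F=\lambda I$, $\lambda\to\infty$, gives $W\gtrsim\lambda^p$ but $|\det F-1|\sim\lambda^n$). The paper does \emph{not} try to control $\int_{\Omega_\eps^+}|\det\nabla y_\eps-1|$ at all: after replacing $\pi$ by the bounded, nonnegative truncation $\hat\pi$ and writing $\hat\pi(y_{R_\eps})-\hat\pi(y_\eps)\det\nabla y_\eps=(\hat\pi(y_{R_\eps})-\hat\pi(y_\eps))+\hat\pi(y_\eps)(1-\det\nabla y_\eps)$, one observes that on $\Omega_\eps^+$ (where $\det\nabla y_\eps>2$) the term $\hat\pi(y_\eps)(1-\det\nabla y_\eps)$ is \emph{nonpositive} by \ref{hyp:pi2} and can simply be discarded in the upper bound. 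This sign observation is exactly what makes the bad set harmless, and it is missing from your proposal.

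Second, comparing $\hat\pi(y_\eps)$ to $\hat\pi(x)$ via $\hat\pi(y_\eps)\le\hat\pi(x)+L|y_\eps-x|$ is the wrong reference: $|y_\eps-x|\approx|(R_\eps-I)x|$ is of order $1$ (unless $R_\eps\to I$, which is not known), so this produces a genuine $O(\eps)$ error that blows the $\eps^2$ budget. The paper instead uses the optimality assumption $I\in\RR$ (equation \eqref{eq:IinR}) to replace $\int_\Omega\hat\pi(x)$ by $\int_\Omega\hat\pi(y_{R_\eps})$ (a nonnegative swap, no cost), and then compares $\hat\pi(y_\eps)$ to $\hat\pi(y_{R_\eps})$, where $|y_\eps-y_{R_\eps}|=\eps|u_\eps|$ is small. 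That yields $\int_\Omega W\le C\eps^2+C\eps\|y_\eps-y_{R_\eps}\|_1=C\eps^2(1+\|u_\eps\|_1)$, which has precisely the structure needed for the Poincar\'e absorption you correctly anticipate at the end. Your proposed ``first get $O(\eps)$, then bootstrap'' detour cannot substitute for this: the crude bound $\int_\Omega W\le C\eps$ only gives $\|u_\eps\|_{W^{1,p}}\lesssim\eps^{1/p-1}$, which is unbounded for every $p<2$, so you never reach a regime where the bootstrap closes. The argument must instead, as in the paper, produce the self-improving estimate $\int_\Omega g_p(\eps|\nabla u_\eps|)\le C\eps^2(1+\|u_\eps\|_1)$ in one pass (with $R_\eps$ supplied unconditionally by Theorem~\ref{lemma:prigidity}) and then absorb $\|u_\eps\|_1$.
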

	
	\begin{proof}
		Let $\widehat{\mc E}_\eps$ be the auxiliary energy defined as in \eqref{eq:energy1} with $\pi$ replaced by the function $\hat\pi$ given by Lemma~\ref{lemma:hatpi1}. 
		Since $\pi\ge\hat\pi$ everywhere and $\pi\equiv\hat\pi$ on $\Omega$, we have that
		\begin{equation}\label{eq:Ehat}
			\widehat{\mc E}_\eps(y)\le \mc E_\eps(y) \qquad\text{for every } y\in\mathring{W}^{1,p}(\Omega;\R^n).
		\end{equation}
		For the sake of brevity we introduce the notation
		\begin{equation}\label{eq:omegapiumeno}
			\Omega_\eps^-:=\left\{x\in \Omega: \ |\det\nabla y_\eps(x)-1|\le 1\right\},\qquad \Omega_\eps^+:=\Omega\setminus \Omega_\eps^-.	
		\end{equation}
		Since $\widehat{\mc E}_\eps(y_\eps)\le\EE_\eps(y_\eps)\le C\eps^2$, we have that $y_\eps$ belongs to $Y^p$, hence in particular $\det\nabla y_\eps>0$ a.e.\ in $\Omega$. By \ref{hyp:W5} and Theorem~\ref{lemma:prigidity} we infer the existence of $R_\eps\in SO(n)$ such that
		\begin{equation}\label{eq:gpW}
			\int_\Omega g_p(|\nabla y_\eps(x)-R_\eps|)\d x\le C\int_\Omega W(x,\nabla y_\eps(x))\d x.
		\end{equation}
		By \eqref{eq:IinR} we deduce that
		\begin{align*}
			\int_\Omega W(x,\nabla y_\eps)\d x&=\widehat{\mc E}_\eps(y_\eps)+\eps\int_\Omega\left(\hat\pi(x)-\hat\pi(y_\eps)\det\nabla y_\eps\right)\d x\\
			&\le C\eps^2+\eps\int_\Omega\left(\hat\pi(y_{R_\eps})-\hat\pi(y_\eps)\det\nabla y_\eps\right)\d x\\
			&\le C\eps^2+\eps\int_\Omega|\hat\pi(y_\eps)-\hat\pi(y_{R_\eps})|\d x+\eps\int_\Omega\hat\pi (y_\eps)(1-\det\nabla y_\eps)\d x.
		\end{align*}
		Since $\hat\pi\geq0$ by construction, the integrand in the last integral above is nonpositive on $\Omega_\eps^+$. Thus, using the fact that $\hat\pi$ is Lipschitz continuous and bounded, and applying H\"older's inequality we deduce that 
		\begin{equation}\label{eq:West}
			\int_\Omega W(x,\nabla y_\eps)\d x\le C\eps^2+C\eps\|y_\eps-y_{R_\eps}\|_1+C\eps \|\det\nabla y_\eps-1\|_{L^2(\Omega_\eps^-)}.
		\end{equation}
		By \eqref{eq:coerc2} this implies
		\begin{equation*}
			\|\det\nabla y_\eps-1\|_{L^2(\Omega_\eps^-)}^2\le C\eps^2+C\eps\|y_\eps-y_{R_\eps}\|_1+C\eps \|\det\nabla y_\eps-1\|_{L^2(\Omega_\eps^-)},
		\end{equation*}
		which, in turn, by Young's inequality yields
		\begin{equation}\label{eq:det}
			\|\det\nabla y_\eps-1\|_{L^2(\Omega_\eps^-)}^2\le C\eps^2+C\eps\|y_\eps-y_{R_\eps}\|_1.
		\end{equation}
		By combining \eqref{eq:West} and \eqref{eq:det} we deduce
		\begin{equation}\label{eq:WL1}
			\int_\Omega W(x,\nabla y_\eps)\d x\le C\eps^2+C\eps\|y_\eps-y_{R_\eps}\|_1,
		\end{equation}
		and, as a consequence of \eqref{eq:displacement}, \eqref{eq:gpW}, and \eqref{eq:WL1}, we obtain
		\begin{equation}\label{eq:gpbound}
			\int_\Omega g_p(|\eps\nabla u_\eps|)\d x=\int_\Omega g_p(|\nabla y_\eps-R_\eps|)\d x\le C\eps^2+C\eps\|y_\eps-y_{R_\eps}\|_1=C\eps^2(1+\|u_\eps\|_1).
		\end{equation}
		Using the definition \eqref{eq:gp} of $g_p$ this implies that
		\begin{equation}\label{eq:L2bound}
			\int_{\{|\eps\nabla u_\eps|\le 1\}}|\eps\nabla u_\eps|^2\d x\le 2 \int_\Omega g_p(|\eps\nabla u_\eps|)\d x\le C\eps^2(1+\|u_\eps\|_1).
		\end{equation}
		By H\"older's inequality we obtain
		\begin{equation}\label{eq:Lpbound1}
			\int_{\{|\eps\nabla u_\eps|\le 1\}}|\eps\nabla u_\eps|^p\d x\le C\left(\int_{\{|\eps\nabla u_\eps|\le 1\}}|\eps\nabla u_\eps|^2\d x\right)^\frac{p}{2}\le C\eps^p(1+\|u_\eps\|_1)^\frac{p}{2}\le C\eps^p(1+\|u_\eps\|_1),
		\end{equation}
		where the last inequality follows from the fact that $t^\frac{p}{2}\le 1+t$ for $t\ge 0$.
		
		Again from \eqref{eq:gp} and recalling that $p\le 2$ we also have that
		\begin{equation}\label{eq:Lpbound2}
			\int_{\{|\eps\nabla u_\eps|> 1\}}|\eps\nabla u_\eps|^p\d x\le\int_\Omega g_p(|\eps\nabla u_\eps|)\d x\le C\eps^2(1+\|u_\eps\|_1)\le C\eps^p(1+\|u_\eps\|_1).
		\end{equation}
		By \eqref{eq:Lpbound1}, \eqref{eq:Lpbound2}, and the continuous embedding of $W^{1,p}(\Omega;\R^n)$ into $L^1(\Omega;\R^n)$ we deduce that
		\begin{equation*}
			\|\nabla u_\eps\|_p^p\le C+C\|u_\eps\|_{W^{1,p}}.
		\end{equation*}
		Since $u_\eps$ has zero average, Poincar\'e-Wirtinger inequality finally yields
		\begin{equation*}
			\|u_\eps\|_{W^{1,p}}^p\le C+C\|u_\eps\|_{W^{1,p}},
		\end{equation*}
		which implies $\|u_\eps\|_{W^{1,p}}\le C$. This inequality, combined with \eqref{eq:det} and \eqref{eq:gpbound}, provides \eqref{eq:estdet} and \eqref{eq:estgp}.
		
		Finally, if $(R_\eps')$ is a sequence of rotations whose corresponding rescaled displacements satisfy \eqref{eq:estgp}, then 
		\begin{equation*}
			|R_\eps-R_\eps'|\le C(\|\nabla y_\eps-R_\eps\|_p+\|\nabla y_\eps-R_\eps'\|_p)\le C\eps.
		\end{equation*}
		This concludes the proof.
	\end{proof}
	
	As an immediate corollary we obtain that the infimum of the energy $\mc E_\eps$ is of order $\eps^2$.
	
	\begin{cor}\label{cor:inf}
		Assume \ref{hyp:W1}--\ref{hyp:W5}, \ref{hyp:pi1}, and \ref{hyp:pi2}. Then 
		\begin{equation}\label{eq:infest}
			-C\eps^2\le\inf\limits_{\mathring{W}^{1,p}(\Omega;\R^n)}\EE_\eps\le 0  \quad \text{for every }\eps\in(0,1).
		\end{equation}
	\end{cor}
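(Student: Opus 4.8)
The plan is to prove the two inequalities in \eqref{eq:infest} separately. The bound $\inf_{\mathring W^{1,p}(\Omega;\R^n)}\EE_\eps\le0$ is immediate: I would evaluate $\EE_\eps$ on the identity deformation $y(x)=x$, which is admissible since it has zero average by \eqref{eq:baricenter} and $\det I=1>0$. Its elastic energy vanishes by \ref{hyp:W3} and the force integrand is $\pi(x)\det I-\pi(x)=0$, so $\EE_\eps(y)=0$ for every $\eps$. (Equivalently, one could test on any rigid motion $y_R$ with $R\in\RR$, using \ref{hyp:W2}--\ref{hyp:W3}.) The substantive part is the lower bound, and it rests on Lemma~\ref{lemma:est1}.

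For the lower bound it is enough to find a constant $C>0$, independent of $\eps$ and $y$, such that $\EE_\eps(y)\ge-C\eps^2$ whenever $\EE_\eps(y)\le0$ (otherwise there is nothing to prove). Fix such a $y$; finiteness of the energy forces $y\in Y^p$ and $\pi(y)\det\nabla y\in L^1(\Omega)$, and since $\EE_\eps(y)\le0<\eps^2$, Lemma~\ref{lemma:est1} applies and produces a rotation $R\in SO(n)$ for which the rescaled displacement $u:=\frac1\eps R^T(y-y_R)$ satisfies $\|u\|_{W^{1,p}}\le C$ (hence $\|u\|_1\le C$) and $\|\det\nabla y-1\|_{L^2(\Omega^-)}\le C\eps$, where $\Omega^-:=\{x\in\Omega:|\det\nabla y(x)-1|\le1\}$ and $\Omega^+:=\Omega\setminus\Omega^-$. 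Since $W\ge0$, it only remains to bound the force term $\eps\int_\Omega(\pi(y)\det\nabla y-\pi(x))\d x$ from below by $-C\eps^2$.

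To estimate the force term I would pass to the auxiliary function $\hat\pi$ of Lemma~\ref{lemma:hatpi1}: since $0\le\hat\pi\le\pi$, $\det\nabla y>0$, and $\hat\pi\equiv\pi$ on $\Omega$, one has $\pi(y)\det\nabla y-\pi(x)\ge\hat\pi(y)\det\nabla y-\hat\pi(x)$ pointwise, so that
\begin{equation*}
\int_\Omega\big(\pi(y)\det\nabla y-\pi(x)\big)\d x\ge\int_\Omega\big(\hat\pi(y)-\hat\pi(Rx)\big)\d x+\int_\Omega\big(\hat\pi(Rx)-\hat\pi(x)\big)\d x+\int_\Omega\hat\pi(y)\big(\det\nabla y-1\big)\d x.
\end{equation*}
The middle term is nonnegative because $I\in\RR$ and, by Lemma~\ref{lemma:hatpi1}, $\hat\pi$ has the same optimal rotations as $\pi$ (they agree near $\overline{\mc O}$, which contains $R\Omega$ for every $R\in SO(n)$). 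The first term is $\ge-C\eps$ by the global Lipschitz continuity of $\hat\pi$, together with $|y-Rx|=\eps|u|$ and $\|u\|_1\le C$. The last term is handled by splitting the domain: on $\Omega^+$ one has $\det\nabla y>2$ (because $\det\nabla y>0$), hence $\hat\pi(y)(\det\nabla y-1)\ge0$ there, whereas on $\Omega^-$ the integral is $\ge-C\|\det\nabla y-1\|_{L^1(\Omega^-)}\ge-C\|\det\nabla y-1\|_{L^2(\Omega^-)}\ge-C\eps$ by boundedness of $\hat\pi$ and H\"older's inequality. Adding up, $\int_\Omega(\pi(y)\det\nabla y-\pi(x))\d x\ge-C\eps$, hence $\EE_\eps(y)\ge-C\eps^2$, which yields the lower bound in \eqref{eq:infest}.

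The only genuinely delicate point — the one that is not routine — is that for a generic admissible $y$ the force term is merely of order $\eps$, so a naive estimate would only give $\inf\EE_\eps\ge-C\eps$. Upgrading this to order $\eps^2$ uses the quantitative content of Lemma~\ref{lemma:est1} (the $L^1$-closeness of $y$ to a rotation $R$ and the $L^2$-smallness of $\det\nabla y-1$ on $\Omega^-$), and the fact that the leading, $O(\eps)$, part of the force is nonnegative precisely because the comparison is with an \emph{optimal} rotation, i.e. because $I\in\RR$. The passage from $\pi$ to $\hat\pi$ is a minor technicality needed to work with a globally Lipschitz and bounded pressure, since $y$ may take values where $\pi$ is not known to be Lipschitz.
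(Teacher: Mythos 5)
Your proof is correct and follows essentially the same route as the paper: both rely on Lemma~\ref{lemma:est1} to obtain the $L^1$-closeness $\|y-y_R\|_1\le C\eps$ and the bound $\|\det\nabla y-1\|_{L^2(\Omega^-)}\le C\eps$, and both use the same decomposition of the pressure term together with nonnegativity of $\hat\pi$ on $\Omega^+$ and optimality of $I$. The paper phrases the lower bound via an almost-minimizing sequence (which requires first noting $\inf\EE_\eps\le0$ so that Lemma~\ref{lemma:est1} applies); your framing—bounding directly any $y$ with $\EE_\eps(y)\le0$—is a slightly cleaner packaging of the identical argument.
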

	
	\begin{proof}
		Let $(y_\eps)$ be a minimizing sequence satisfying
		\begin{equation*}
			\EE_\eps(y_\eps)\le\inf\limits_{\mathring{W}^{1,p}(\Omega;\R^n)}\EE_\eps +\eps^2.
		\end{equation*}
		Using the fact that $W$ is nonnegative and arguing as in the proof of Lemma~\ref{lemma:est1}, we deduce that
		\begin{align*}
			\EE_\eps(y_\eps)&\ge \eps\int_\Omega\left(\hat\pi (y_\eps)\det\nabla y_\eps-\hat\pi(y_{R_\eps})\right)\d x\ge -C\eps \|y_\eps-y_{R_\eps}\|_1-C\eps\|\det\nabla y_\eps-1\|_{L^2(\Omega_\eps^-)}\\
			&\ge -C\eps^2,
		\end{align*}
		where the last inequality follows from Lemma~\ref{lemma:est1}. This proves the first inequality in \eqref{eq:infest}.
		
		The other inequality in \eqref{eq:infest} follows trivially by the fact that the energy $\EE_\eps$ is zero on the identity map.
	\end{proof}
	
	We now have all the main ingredients to prove compactness of deformations with equibounded rescaled energies.
	
	\begin{prop}[\textbf{Compactness}]\label{prop:compactness}
		Assume \ref{hyp:W1}--\ref{hyp:W5}, \ref{hyp:pi1}, and \ref{hyp:pi2}. If $\EE_\eps(y_\eps)\le C\eps^2$ for every $\eps\in(0,1)$, then for any $R_\eps$, $u_\eps$ given by Lemma~\ref{lemma:est1} we have that, up to subsequences,
		\begin{itemize}
			\item $u_\eps\wto u_0$ weakly in $\mathring{W}^{1,p}(\Omega;\R^n)$ with $u_0\in \mathring{H}^{1}(\Omega;\R^n)$,\smallskip
			\item $R_\eps\to R_0$ with $R_0\in \RR$,
		\end{itemize}
		as $\eps\to 0$. Moreover, $R_0$ is independent of the choice of $R_\eps$ 
		and $u_0$ is independent up to infinitesimal rigid motions of the form $Ax$, with $A\in \R^{n\times n}_{\rm skew}$.
	\end{prop}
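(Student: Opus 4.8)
The plan is to read off the compactness from the estimates already established in Lemma~\ref{lemma:est1}, the only genuinely delicate point being the optimality of the limiting rotation. By Lemma~\ref{lemma:est1} the sequence $(u_\eps)$ is bounded in the reflexive space $\mathring W^{1,p}(\Omega;\R^n)$, so up to a subsequence $u_\eps\wto u_0$ weakly there, with $u_0\in\mathring W^{1,p}(\Omega;\R^n)$. To upgrade this to $u_0\in\mathring H^{1}(\Omega;\R^n)$ I would combine \eqref{eq:estgp} with \eqref{eq:propgp}: on the set $A_\eps:=\{|\eps\nabla u_\eps|\le1\}$ one has $\eps^2|\nabla u_\eps|^2\le2\,g_p(\eps|\nabla u_\eps|)$, whence the truncated gradients $\mathbf 1_{A_\eps}\nabla u_\eps$ are bounded in $L^2(\Omega;\R^{n\times n})$, while on $\Omega\setminus A_\eps$ the bound \eqref{eq:estgp} together with $p\le2$ gives $\int_{\Omega\setminus A_\eps}|\nabla u_\eps|^p\le C\eps^{2-p}\to0$. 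Passing to a further subsequence, $\mathbf 1_{A_\eps}\nabla u_\eps\wto v$ in $L^2$; since $L^{p'}(\Omega)\hookrightarrow L^2(\Omega)$ on a bounded domain, testing against $L^{p'}$ functions shows $\nabla u_\eps\wto v$ in $L^p$, so $v=\nabla u_0\in L^2$ and $u_0\in\mathring H^{1}(\Omega;\R^n)$.

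Next, since $SO(n)$ is compact, along a further subsequence $R_\eps\to R_0\in SO(n)$. To see $R_0\in\RR$ I would use the auxiliary function $\hat\pi$ of Lemma~\ref{lemma:hatpi1} (bounded, Lipschitz, $0\le\hat\pi\le\pi$, $\hat\pi\equiv\pi$ near $\overline{\mc O}$) and write $\det\nabla y_\eps=\det(I+\eps\nabla u_\eps)$. From $W\ge0$ and $\EE_\eps(y_\eps)\le C\eps^2$ one obtains $\int_\Omega(\pi(y_\eps)\det\nabla y_\eps-\pi(x))\d x\le C\eps$. For the reverse bound, since $\pi\ge\hat\pi$, $\det\nabla y_\eps>0$ and $\hat\pi=\pi$ on $\mc O$,
\begin{equation*}
\int_\Omega(\pi(y_\eps)\det\nabla y_\eps-\pi(x))\d x\ \ge\ \int_\Omega\hat\pi(y_\eps)(\det\nabla y_\eps-1)\d x+\int_\Omega\big(\hat\pi(y_\eps)-\hat\pi(R_\eps x)\big)\d x+\int_\Omega\big(\pi(R_\eps x)-\pi(x)\big)\d x.
\end{equation*}
The last integral is $\ge0$ because $I\in\RR$ by \eqref{eq:IinR}; the middle one is $\ge-L\eps\|u_\eps\|_1\ge-C\eps$ by the Lipschitz bound on $\hat\pi$ and \eqref{eq:displacement}; the first one is $\ge0$ on $\{|\det\nabla y_\eps-1|>1\}$ (there $\det\nabla y_\eps>0$ forces $\det\nabla y_\eps-1>1$, and $\hat\pi\ge0$) and is $\ge-\|\hat\pi\|_\infty|\Omega|^{1/2}\|\det\nabla y_\eps-1\|_{L^2(\{|\det\nabla y_\eps-1|\le1\})}\ge-C\eps$ by \eqref{eq:estdet}. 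Hence $0\le\int_\Omega(\pi(R_\eps x)-\pi(x))\d x\le C\eps\to0$, so $\int_\Omega\pi(R_\eps x)\d x\to\int_\Omega\pi(x)\d x$; by continuity of $R\mapsto\int_\Omega\pi(Rx)\d x$ and \eqref{eq:IinR}, $R_0$ minimizes this functional, i.e.\ $R_0\in\RR$.

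For the independence statements: if $(R'_\eps)$ is any other choice as in Lemma~\ref{lemma:est1}, then \eqref{eq:estuniq} gives $|R_\eps-R'_\eps|\le C\eps$, so $R'_\eps\to R_0$ as well, which proves independence of $R_0$. Writing $(R'_\eps)^TR_\eps=e^{\eps A_\eps}$ with $A_\eps\in\R^{n\times n}_{\rm skew}$ bounded (again by \eqref{eq:estuniq}), the definition \eqref{eq:displacement} gives $u'_\eps=\tfrac1\eps\big((R'_\eps)^TR_\eps-I\big)x+(R'_\eps)^TR_\eps u_\eps$; passing to a subsequence with $A_\eps\to A$, the first term converges uniformly on $\Omega$ to $Ax$ and $(R'_\eps)^TR_\eps\to I$, so $u'_\eps\wto u_0+Ax$ weakly in $\mathring W^{1,p}(\Omega;\R^n)$, where $Ax$ is an infinitesimal rigid motion with $\int_\Omega Ax\d x=0$ by \eqref{eq:baricenter}. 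This is the asserted independence of $u_0$.

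The only non-routine step is the lower bound in the second paragraph. The naive estimate $|\hat\pi(y_\eps)-\hat\pi(R_\eps x)|\le L\eps|u_\eps|$ must \emph{not} be multiplied by $\det\nabla y_\eps$, which is not controlled in $L^1$ on the bad set $\{|\det\nabla y_\eps-1|>1\}$; the trick is to first split off the factor $\det\nabla y_\eps-1$ against the bounded, nonnegative $\hat\pi(y_\eps)$, exploiting $\hat\pi\ge0$ there and the quantitative bound \eqref{eq:estdet} on the complementary set, and only afterwards invoke the Lipschitz bound against the finite measure $\d x$.
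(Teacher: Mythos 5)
Your proof is essentially the paper's argument: boundedness from Lemma~\ref{lemma:est1} gives weak $W^{1,p}$ convergence; a truncation of the gradient upgrades $u_0$ to $H^1$; the same decomposition of $\hat\pi(y_\eps)\det\nabla y_\eps-\hat\pi(x)$ (split $\det\nabla y_\eps-1$ against the bounded nonnegative $\hat\pi$, then Lipschitz against $\d x$) yields $R_0\in\RR$; and \eqref{eq:estuniq} gives uniqueness of $R_0$. For uniqueness of $u_0$ you supply a direct computation, whereas the paper cites \cite{MaorMora}; both are fine.

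One small slip: in upgrading $u_0$ to $H^1$ you use the truncation set $A_\eps=\{|\eps\nabla u_\eps|\le1\}$ and claim $\int_{\Omega\setminus A_\eps}|\nabla u_\eps|^p\le C\eps^{2-p}\to0$. This fails when $p=2$, since $\eps^{2-p}=1$. Of course when $p=2$ the whole thing is trivial ($g_2(t)=t^2/2$ on all of $[0,\infty)$, so \eqref{eq:estgp} gives $\|\nabla u_\eps\|_2\le C$ directly), but as written your estimate does not cover that case. The paper avoids this by taking $G_\eps=\{\eps^{1/2}|\nabla u_\eps|\le1\}$ and arguing that $\chi_{G_\eps}$ converges to $1$ boundedly in measure while $\chi_{G_\eps}\nabla u_\eps$ is bounded in $L^2$, which identifies the weak $L^2$ limit uniformly in $p\in(1,2]$ without relying on a vanishing $L^p$ contribution from the complement. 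Either patch (treat $p=2$ separately, or switch to $G_\eps$) closes the gap.
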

	
	\begin{proof}
		By Lemma~\ref{lemma:est1} the sequence $(u_\eps)$ is uniformly bounded in $W^{1,p}(\Omega;\R^n)$. Hence, up to subsequences, $u_\eps\wto u_0$ weakly in $\mathring{W}^{1,p}(\Omega;\R^n)$. We now show that $u_0$ belongs to $\mathring{H}^{1}(\Omega;\R^n)$.
		We first introduce the set 
		\begin{equation}\label{eq:goodset}
			G_\eps:=\left\{x\in \Omega: \ \eps^\frac 12|\nabla u_\eps(x)|\le 1\right\},
		\end{equation}
		and we observe that by Tchebichev inequality 
		\begin{equation}\label{eq:goodsetvanish}
			|\Omega\setminus G_\eps|\le C\eps^\frac p2.
		\end{equation}
		We claim that
		\begin{itemize}
			\item[(i)] $\chi_{G_\eps}\nabla u_\eps$ is bounded in $L^2(\Omega;\R^{n\times n})$;\smallskip
			\item[(ii)] $\nabla u_0\in L^2(\Omega;\R^{n\times n})$ and, up to subsequences, $\chi_{G_\eps}\nabla u_\eps\wto \nabla u_0$ weakly in $L^2(\Omega;\R^{n\times n})$.
		\end{itemize}
		Assertion (i) easily follows from \eqref{eq:estgp} arguing as in \eqref{eq:L2bound} and using that $G_\eps\subset \{|\eps\nabla u_\eps|\le 1\}$. To prove (ii) we first note that (i) ensures that $\chi_{G_\eps}\nabla u_\eps \wto v$ weakly in $L^2(\Omega;\R^{n\times n})$, up to subsequences, for some $v\in L^2(\Omega;\R^{n\times n})$. On the other hand, by \eqref{eq:goodsetvanish} we have that 
		$\chi_{G_\eps}$ converges to $1$ boundedly in measure. Since $\nabla u_\eps\wto \nabla u_0$ in $L^p(\Omega;\R^{n\times n})$, we conclude that $ \chi_{G_\eps}\nabla u_\eps\wto \nabla u_0$ in $L^p(\Omega;\R^{n\times n})$. Hence, $v=\nabla u_0$ and (ii) is proved. By Sobolev embedding we have that $u_0\in \mathring{H}^1(\Omega;\R^n)$.
		
		Since $SO(n)$ is a compact set, there exists $R_0\in SO(n)$ such that $R_\eps\to R_0$, up to subsequences. To prove that $R_0\in\RR$ we argue as in the proof of Lemma~\ref{lemma:est1} and deduce
		\begin{equation*}
			C\ge \frac{1}{\eps^2}\EE_\eps(y_\eps)\ge \frac{1}{\eps^2}\widehat{\EE}_\eps(y_\eps)\ge \frac 1\eps\int_\Omega(\hat\pi(y_\eps)\det\nabla y_\eps-\hat\pi(x))\d x\ge -c+\frac 1\eps\int_\Omega(\pi(R_\eps x)-\pi(x))\d x.
		\end{equation*}
		Note that in the last integral we used that $\hat\pi\equiv\pi$ on $\mc O$.
		
		By multiplying by $\eps$ and then letting $\eps\to 0$ we infer that
		\begin{equation*}
			\int_{\Omega}(\pi(R_0 x)-\pi(x))\d x\le 0.
		\end{equation*}
		This implies that $R_0\in\RR$ since by assumption the identity matrix is an optimal rotation.
		
		Uniqueness of $R_0$ is a straightforward consequence of \eqref{eq:estuniq}. Uniqueness (up to an infinitesimal rigid motion) of $u_0$ follows by arguing as in \cite[Theorem~5.1]{MaorMora}, recalling that displacements have zero average in our setting.
	\end{proof}
	
	The following proposition will be useful in both the liminf and the limsup inequalities to characterize the asymptotic behavior of the rescaled pressure potential. Note that, besides the presence of $\hat\pi$ in place of $\pi$, the integral at the left-hand side of \eqref{eq:neweq8} differs from the rescaled pressure potential in the total energy whenever $R_\eps$ is not an optimal rotation.
	
	\begin{prop}\label{prop:new}
		Let $\hat\pi$ be a function as in Lemma~\ref{lemma:hatpi1} and let $y_\eps\in \mathring{W}^{1,p}(\Omega;\R^n)$ satisfy \eqref{eq:estdet}. Assume there exist $R_\eps\in SO(n)$ converging to $R_0\in\RR$ such that the corresponding displacements $u_\eps$, defined as in \eqref{eq:displacement}, weakly converge in $\mathring{W}^{1,p}(\Omega;\R^n)$ to $u_0\in\mathring{H}^{1}(\Omega;\R^n)$. Then,
		\begin{align}
			\liminf_{\eps\to 0} \frac1\eps \int_\Omega \left(\hat\pi(y_\eps)\det\nabla y_\eps-\hat\pi(y_{R_\eps})\right)\d x 
			& \geq
			\lim_{\eps\to 0} \frac1\eps \int_{\{|\det\nabla y_\eps-1|\le 1\}} \left(\hat\pi(y_\eps)\det\nabla y_\eps-\hat\pi(y_{R_\eps})\right)\d x \nonumber \\
			& = \int_{\partial\Omega}\pi(R_0x)n_{\partial\Omega}(x)\cdot u_0(x)\d\mc H^{n-1}(x). \label{eq:neweq8}
		\end{align}
	\end{prop}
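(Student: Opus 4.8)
The plan is to split $\int_\Omega$ into the integral over $\Omega_\eps^-:=\{|\det\nabla y_\eps-1|\le1\}$ and over $\Omega_\eps^+:=\Omega\setminus\Omega_\eps^-$, to show that the second contributes asymptotically $\ge0$, and to compute the limit over $\Omega_\eps^-$ by passing to a smaller set where a linearization is legitimate. Throughout, $(u_\eps)$ is bounded in $\mathring{W}^{1,p}(\Omega;\R^n)$; since $u_\eps\to u_0$ strongly in $L^p$ by Rellich and $R_\eps\to R_0$, one has $y_\eps\to y_{R_0}$ in $L^p(\Omega;\R^n)$; and (recall that the deformations are orientation preserving) $\det\nabla y_\eps>0$ a.e.. On $\Omega_\eps^+$ one then has $\det\nabla y_\eps>2$, hence $\hat\pi(y_\eps)\det\nabla y_\eps\ge0$ and the integrand is $\ge-\|\hat\pi\|_\infty$. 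Writing $\nabla y_\eps=R_\eps(I+\eps\nabla u_\eps)$ and using \eqref{eq:sviluppodet}--\eqref{eq:estik}, one checks that $x\in\Omega_\eps^+$ forces $\eps|\nabla u_\eps(x)|$ to stay bounded away from $0$, so $|\Omega_\eps^+|\le C\eps^p$ by Chebyshev's inequality. Hence $\frac1\eps\int_{\Omega_\eps^+}(\hat\pi(y_\eps)\det\nabla y_\eps-\hat\pi(y_{R_\eps}))\d x\ge-C\eps^{p-1}\to0$, which (once the limit over $\Omega_\eps^-$ is shown to exist) yields the first inequality in \eqref{eq:neweq8}.

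For the limit over $\Omega_\eps^-$ I would introduce the smaller set $\tilde G_\eps:=\{x\in\Omega:\eps|\nabla u_\eps(x)|\le\delta_0,\ \eps|u_\eps(x)|\le\delta_1\}$, with small absolute constants $\delta_0,\delta_1>0$ chosen so that $\tilde G_\eps\subset\Omega_\eps^-$ and so that $y_\eps(x),y_{R_\eps}(x)$ lie, for $x\in\tilde G_\eps$, in the open set where $\hat\pi$ coincides with $\pi$ and is Lipschitz. Since $|\Omega\setminus\tilde G_\eps|\le C\eps^p$ and the integrand is bounded on $\Omega_\eps^-$ (there $0\le\det\nabla y_\eps\le2$), the limit over $\Omega_\eps^-$ equals, if it exists, the one over $\tilde G_\eps$. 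On $\tilde G_\eps$ we have $\det\nabla y_\eps=1+\eps\div u_\eps+r_\eps$ with $|r_\eps|\le C(\eps|\nabla u_\eps|)^2$ by \eqref{eq:sviluppodet}--\eqref{eq:estik}, and $\frac1\eps\int_{\tilde G_\eps}|\hat\pi(y_\eps)r_\eps|\d x\le\frac C\eps\int_{\tilde G_\eps}(\eps|\nabla u_\eps|)^p\d x\le C\eps^{p-1}\to0$; thus
\begin{equation*}
\frac1\eps\int_{\tilde G_\eps}\big(\hat\pi(y_\eps)\det\nabla y_\eps-\hat\pi(y_{R_\eps})\big)\d x=\int_{\tilde G_\eps}\hat\pi(y_\eps)\,\div u_\eps\,\d x+\frac1\eps\int_{\tilde G_\eps}\big(\hat\pi(y_\eps)-\hat\pi(y_{R_\eps})\big)\d x+o(1).
\end{equation*}
As $\chi_{\tilde G_\eps}\hat\pi(y_\eps)\to\pi(R_0\,\cdot)$ strongly in $L^{p'}(\Omega)$ (from $y_\eps\to y_{R_0}$ in $L^p$, boundedness and continuity of $\hat\pi$, and $|\Omega\setminus\tilde G_\eps|\to0$) and $\div u_\eps\wto\div u_0$ weakly in $L^p(\Omega)$, the first term tends to $\int_\Omega\pi(R_0x)\,\div u_0\,\d x$. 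Everything therefore reduces to
\begin{equation}\label{eq:keylim}
\frac1\eps\int_{\tilde G_\eps}\big(\hat\pi(y_\eps)-\hat\pi(y_{R_\eps})\big)\d x\ \longrightarrow\ \int_\Omega\nabla\pi(R_0x)\cdot R_0u_0\,\d x ,
\end{equation}
since then the limit over $\Omega_\eps^-$ equals $\int_\Omega\div\!\big(\pi(R_0\,\cdot)u_0\big)\d x=\int_{\partial\Omega}\pi(R_0x)\,n_{\partial\Omega}\cdot u_0\,\d\mc H^{n-1}$ by the divergence theorem ($\pi(R_0\,\cdot)\in W^{1,\infty}(\Omega)$ and $u_0\in\mathring{H}^1(\Omega;\R^n)$), which is \eqref{eq:neweq8}.

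The main obstacle is \eqref{eq:keylim}: since $\pi$ (hence $\hat\pi$) is only Lipschitz, one cannot Taylor-expand $\hat\pi(y_\eps)$ about $y_{R_\eps}$ with a continuous remainder. Denoting by $\nabla\hat\pi\in L^{p'}(\R^n)$ the a.e.\ gradient (which has compact support), one has $\int_\Omega\nabla\hat\pi(R_\eps x)\cdot R_\eps u_\eps\,\d x\to\int_\Omega\nabla\pi(R_0x)\cdot R_0u_0\,\d x$ by the strong $L^{p'}$ convergence $\nabla\hat\pi(R_\eps\,\cdot)\to\nabla\pi(R_0\,\cdot)$ (using $\nabla\hat\pi=\nabla\pi$ near $\overline{\mc O}$) and the strong $L^p$ convergence $R_\eps u_\eps\to R_0u_0$, while $\int_{\Omega\setminus\tilde G_\eps}\nabla\hat\pi(R_\eps x)\cdot R_\eps u_\eps\,\d x=O(\eps^{p-1})$; hence \eqref{eq:keylim} is equivalent to
\begin{equation}\label{eq:keylim2}
\int_{\tilde G_\eps}\Big(\tfrac1\eps\big(\hat\pi(y_\eps)-\hat\pi(y_{R_\eps})\big)-\nabla\hat\pi(R_\eps x)\cdot R_\eps u_\eps\Big)\d x\ \longrightarrow\ 0 .
\end{equation}

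I would prove \eqref{eq:keylim2} by mollifying: with $\hat\pi_\delta:=\hat\pi*\rho_\delta\in C_c^\infty$ and $\phi_\delta:=\hat\pi-\hat\pi_\delta$ one has $\|\phi_\delta\|_\infty\to0$, $\|\nabla\phi_\delta\|_{L^{p'}(\R^n)}\to0$, $\|\nabla\phi_\delta\|_\infty\le 2\,\mathrm{Lip}(\hat\pi)$. For the smooth part the fundamental theorem of calculus gives $\frac1\eps(\hat\pi_\delta(y_\eps)-\hat\pi_\delta(y_{R_\eps}))=\int_0^1\nabla\hat\pi_\delta(R_\eps x+s\eps R_\eps u_\eps)\cdot R_\eps u_\eps\,\d s$, and the corresponding contribution to \eqref{eq:keylim2} tends to $0$ as $\eps\to0$ for each fixed $\delta$, by uniform continuity of $\nabla\hat\pi_\delta$, smallness of $\eps u_\eps$ on $\tilde G_\eps$, equi-integrability of $(u_\eps)$, and the Vitali convergence theorem. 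For the remainder $\phi_\delta$ it suffices to bound its contribution $E_\eps^\delta$ to \eqref{eq:keylim2} uniformly in $\eps$ by a quantity vanishing as $\delta\to0$; a diagonal choice $\delta=\delta(\eps)$ then gives \eqref{eq:keylim2}. Now $\int_{\tilde G_\eps}\nabla\phi_\delta(R_\eps x)\cdot R_\eps u_\eps\,\d x=O(\|\nabla\phi_\delta\|_{L^{p'}})$ uniformly, so the delicate point is $\frac1\eps\int_{\tilde G_\eps}(\phi_\delta(y_\eps)-\phi_\delta(y_{R_\eps}))\d x$, where the pointwise Lipschitz bound alone gives only $O(1)$ and the cancellation must be exploited. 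I would do so by replacing $u_\eps$ with a Lipschitz truncation $\bar u_\eps$ having $\mathrm{Lip}(\bar u_\eps)\le\delta_0/\eps$ and $|\{\bar u_\eps\ne u_\eps\}|\le C\eps^p$: then $T_\eps(x,s):=R_\eps x+s\eps R_\eps\bar u_\eps(x)$ is Lipschitz on $\Omega\times(0,1)$ with $\det\nabla_x T_\eps\ge c>0$ everywhere (for $\delta_0$ small), so by the coarea formula $\phi_\delta$ is differentiable at $T_\eps(x,s)$ for a.e.\ $(x,s)$, and the chain rule yields $\phi_\delta(\bar y_\eps)-\phi_\delta(y_{R_\eps})=\eps\int_0^1\nabla\phi_\delta(T_\eps(x,s))\cdot R_\eps\bar u_\eps\,\d s$ a.e.; moreover $x\mapsto x+s\eps\bar u_\eps(x)$ is bi-Lipschitz (its differential being $I$ plus a matrix of norm $<1$), hence injective, so the area formula gives $\int_\Omega|\nabla\phi_\delta(T_\eps(x,s))|^{p'}\det\nabla_x T_\eps\,\d x\le\|\nabla\phi_\delta\|_{L^{p'}(\R^n)}^{p'}$ for every $s$. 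Combining this with H\"older's inequality, $\det\nabla_x T_\eps\ge c$ on $\tilde G_\eps$, and the $O(\eps^{p-1})$ error coming from $\{\bar u_\eps\ne u_\eps\}$, one obtains $\big|\frac1\eps\int_{\tilde G_\eps}(\phi_\delta(y_\eps)-\phi_\delta(y_{R_\eps}))\d x\big|\le C\|\nabla\phi_\delta\|_{L^{p'}}+C\eps^{p-1}$, whence $\limsup_{\eps\to0}|E_\eps^\delta|\le C\|\nabla\phi_\delta\|_{L^{p'}}\to0$ as $\delta\to0$. This establishes \eqref{eq:keylim2}, hence \eqref{eq:keylim}, hence the proposition.
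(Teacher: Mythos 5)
Your proof is correct, and the outer scaffolding matches the paper's: split the rescaled pressure potential into a ``determinant'' piece and a ``difference-quotient'' piece, linearize each on a good set where $\eps\nabla u_\eps$ is controlled, discard the complement using \eqref{eq:estdet} and Chebyshev, and recombine via the divergence theorem. The genuinely different ingredient is the treatment of the difference-quotient limit $\lim_{\eps\to0}\frac1\eps\int_\Omega\big(\hat\pi(y_\eps)-\hat\pi(y_{R_\eps})\big)\d x$. The paper argues pointwise: it shows the integrand converges a.e.\ to $\nabla\pi(R_0x)\cdot R_0u_0(x)$ by invoking Rademacher's theorem at the countable family of base points $R_\eps x$, together with the a.e.\ convergence $\nabla\hat\pi(R_\eps\cdot)\to\nabla\hat\pi(R_0\cdot)$ and $u_\eps\to u_0$, dominates the quotient by $C|u_\eps|$ through the Lipschitz bound, and concludes with the generalized dominated convergence theorem since $u_\eps\to u_0$ strongly in $L^1$. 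You instead split $\hat\pi=\hat\pi_\delta+\phi_\delta$ by mollification, handle the smooth part by the fundamental theorem of calculus and Vitali for fixed $\delta$ as $\eps\to0$, and estimate the singular part $\phi_\delta$ uniformly in $\eps$ via a Lipschitz truncation $\bar u_\eps$ of $u_\eps$, the coarea formula (to pull back the null non-differentiability set of $\phi_\delta$ by the submersion $T_\eps$), and the area formula (to pass $\|\nabla\phi_\delta(T_\eps(\cdot,s))\|_{L^{p'}}$ back to $\|\nabla\phi_\delta\|_{L^{p'}(\R^n)}$), letting $\delta\to0$ last. Your route is technically heavier, requiring the Lipschitz truncation lemma, injectivity of $x\mapsto x+s\eps\bar u_\eps(x)$, and the chain rule for Lipschitz compositions along a.e.\ segment, but the payoff is that the cancellation in the singular part is captured by an integral estimate uniform in $\eps$, so you never need to manipulate pointwise $o$-remainders of a merely Lipschitz function at a moving base point. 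Two small remarks: both your argument and the paper's implicitly use $\det\nabla y_\eps>0$ a.e.\ (you, to infer $\det\nabla y_\eps>2$ on $\Omega_\eps^+$; the paper, to drop the $\Omega_\eps^+$ part of the determinant term by sign), an assumption not spelled out in the proposition's hypotheses but present in every application; and your good set $\{\eps|\nabla u_\eps|\le\delta_0,\ \eps|u_\eps|\le\delta_1\}$ is larger than the paper's $G_\eps=\{\sqrt{\eps}\,|\nabla u_\eps|\le 1\}$, giving the sharper bound $|\Omega\setminus\tilde G_\eps|\le C\eps^p$, which is perfectly adequate here.
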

	
	\begin{proof}
		We write
		\begin{align}\label{eq:I}
			\frac1\eps \int_\Omega \left(\hat\pi(y_\eps)\det\nabla y_\eps-\hat\pi(y_{R_\eps})\right)\d x
			& =	\frac 1\eps\int_\Omega\hat\pi(y_\eps)(\det\nabla y_\eps-1)\d x+\frac 1\eps\int_\Omega(\hat\pi(y_\eps)-\hat\pi(y_{R_\eps}))\d x \nonumber \\
			& =:I_\eps+II_\eps.
		\end{align}
		
		We start by considering $I_\eps$. 
		Let $\Omega^-_\eps$ and $G_\eps$ be defined as in \eqref{eq:omegapiumeno} and \eqref{eq:goodset}. 
		Since $(u_\eps)$ is bounded in $W^{1,p}(\Omega;\R^n)$, property \eqref{eq:goodsetvanish} still holds.
		Moreover, by \eqref{eq:sviluppodet} 
		\begin{equation*}
			\det\nabla y_\eps(x)=\det(I+\eps\nabla u_\eps(x))=1+\sum_{k=1}^n\eps^k\iota_k(\nabla u_\eps(x))\qquad\text{for a.e.\ }x\in \Omega.
		\end{equation*} 
		Since by \eqref{eq:estik} we have that for $k=1,\dots,n$ 
		\begin{equation*}
			|\eps^k\iota_k(\nabla u_\eps(x))|\le C\eps^k|\nabla u_\eps(x)|^k\le C\eps^\frac k2\le C\eps^\frac 12\qquad\text{for a.e.\ }x\in G_\eps,
		\end{equation*}
		we deduce that $G_\eps\subset \Omega^-_\eps$ for $\eps$ small enough. Therefore, using the nonnegativity of $\hat\pi$ and \eqref{eq:sviluppodet} again, we obtain
		\begin{align*}
			I_\eps & \ge \frac 1\eps\int_{\Omega_\eps^-}\hat\pi(y_\eps)(\det\nabla y_\eps-1)\d x \\
			& = \int_{G_\eps}\hat\pi (y_\eps)\div u_\eps\d x+\sum_{k=2}^{n}\eps^{k-1}\int_{G_\eps}\hat\pi(y_\eps)\iota_k(\nabla u_\eps)\d x
			+\frac1\eps\int_{\Omega^-_\eps\setminus G_\eps}\hat\pi(y_\eps)(\det\nabla y_\eps-1)\d x \\
			&=:J^1_\eps+J^2_\eps+J^3_\eps.
		\end{align*}
		We first show that 
		\begin{equation}\label{eq:J1eps}
			\lim\limits_{\eps\to 0}J_\eps^1=\int_{\Omega}\pi(R_0x)\div u_0(x)\d x.
		\end{equation}
		Indeed, since $\hat\pi=\pi$ on $\mathcal O$, we may write
		\begin{align}
			\left| J^1_\eps-\int_{\Omega}\pi(R_0x)\div u_0(x)\d x\right|\le & \int_{G_\eps}|\hat\pi(R_\eps x+\eps R_\eps u_\eps(x))-\hat\pi(R_0 x)||\div u_\eps|\d x \nonumber \\
			&+\left|\int_{G_\eps}\pi(R_0 x)\div u_\eps\d x-\int_{\Omega}\pi(R_0 x)\div u_0\d x\right|. \label{eq:j1epart}
		\end{align}
		Using the Lipschitz continuity of $\hat{\pi}$ and the definition of $G_\eps$, the first integral at the right-hand side can be bounded as follows:
		\begin{align*}
			\int_{G_\eps}|\hat\pi(R_\eps x+\eps R_\eps u_\eps(x))-\hat\pi(R_0 x)||\div u_\eps|\d x
			&\le C|R_\eps-R_0|\|\nabla u_\eps\|_p+C\eps\int_{G_\eps}|u_\eps||\nabla u_\eps|\d x\\
			&\le C|R_\eps-R_0|+C\eps^\frac 12 \|u_\eps\|_p\\
			&\le C|R_\eps-R_0|+C\eps^\frac 12,
		\end{align*}
		where the last term goes to zero, as $\eps\to 0$. 
		Since $\chi_{G_\eps}$ converges to $1$ boundedly in measure, we have that $\chi_{G_\eps}\div u_\eps\wto \div u_0$ weakly in $L^p(\Omega)$, 
		hence the second term in \eqref{eq:j1epart} goes to zero, as well. This proves \eqref{eq:J1eps}.
		
		We now prove that both $J^2_\eps$ and $J^3_\eps$ converge to $0$, as $\eps\to 0$. By \eqref{eq:estik} and \eqref{eq:displacement}, since $\hat\pi$ is bounded, we obtain
		\begin{equation*}
			|J^2_\eps| \le C\sum_{k=2}^n\eps^{k-1}\int_{G_\eps} |\nabla u_\eps|^k\d x\ 
			=C \sum_{k=2}^n \eps^{k-1}\int_{G_\eps}|\nabla u_\eps|^{k-p}|\nabla u_\eps|^p\d x.
		\end{equation*}
		Since $|\nabla u_\eps|\le \eps^{-1/2}$ on $G_\eps$, we have that
		\begin{align*}
			|J^2_\eps|&\le C\sum_{k=2}^n\eps^\frac{k+p-2}{2}\|\nabla u_\eps\|_p^p\le C\eps^\frac p2\to 0.
		\end{align*}
		To bound $J^3_\eps$ we use \eqref{eq:estdet} and deduce
		\begin{equation*}
			|J^3_\eps| \le \frac{C}\eps {\|\det\nabla y_\eps-1\|_{L^2(\Omega_\eps^-)}}|\Omega\setminus G_\eps|^\frac 12
			 \le C |\Omega\setminus G_\eps|^\frac 12,
		\end{equation*}
		which vanishes by \eqref{eq:goodsetvanish}.
		
		By combining the previous inequalities we conclude that
		\begin{equation}\label{eq:liminf2}
			\liminf\limits_{\eps\to 0}I_\eps\ge \lim\limits_{\eps\to 0} \frac 1\eps\int_{\Omega_\eps^-}\hat\pi(y_\eps)(\det\nabla y_\eps-1)\d x = \int_{\Omega}\pi(R_0x)\div u_0(x)\d x.
		\end{equation}
		We now claim that 
		\begin{equation}\label{eq:liminf3}
			\lim\limits_{\eps\to 0}II_\eps= \lim\limits_{\eps\to 0} \frac 1\eps\int_{\Omega_\eps^-}(\hat\pi(y_\eps)-\hat\pi(y_{R_\eps}))\d x=\int_{\Omega}\nabla\pi(R_0 x)\cdot R_0u_0(x)\d x.
		\end{equation}
		Assuming this is true, the thesis follows by \eqref{eq:I}, \eqref{eq:liminf2}, \eqref{eq:liminf3}, and the Divergence Theorem,
		since
		\begin{equation*}
			\pi(R_0x)\div u_0(x)+\nabla\pi(R_0 x)\cdot R_0u_0(x)=\div (\pi(R_0 x)u_0(x)).
		\end{equation*}
		
		To conclude we only need to prove \eqref{eq:liminf3}. We can write the integrand in $II_\eps$ as
		\begin{equation}\label{eq:integrand}
			\frac 1\eps(\hat\pi(y_\eps)-\hat\pi(y_{R_\eps}))=\frac1\eps (\hat\pi(R_\eps x+\eps R_\eps u_\eps(x))-\hat\pi(R_\eps x))
		\end{equation}
		and owing to the Lipschitz continuity of $\hat{\pi}$ we have
		\begin{equation}\label{eq:boundLip}
			\frac 1\eps |\hat\pi(y_\eps)-\hat\pi(y_{R_\eps})| \leq C|u_\eps(x)| \qquad \text{for a.e.\ } x\in\Omega.
		\end{equation}
		Since $(u_\eps)$ is bounded in $L^p(\Omega;\R^n)$ and $|\Omega\setminus\Omega_\eps^-|\leq |\Omega\setminus G_\eps|\to0$ by \eqref{eq:goodsetvanish},
		we deduce that
		\begin{equation*}
			\lim\limits_{\eps\to 0} \frac 1\eps\int_{\Omega\setminus\Omega_\eps^-}(\hat\pi(y_\eps)-\hat\pi(y_{R_\eps}))\d x=0.
		\end{equation*}
		Hence, proving \eqref{eq:liminf3} is equivalent to show that
		\begin{equation}\label{eq:liminf3bis}
			\lim\limits_{\eps\to 0}II_\eps=\int_{\Omega}\nabla\pi(R_0 x)\cdot R_0u_0(x)\d x.
		\end{equation}
		On the other hand, $u_\eps\to u_0$ strongly in $L^1(\Omega;\R^n)$ by compact embedding. Thus, by \eqref{eq:boundLip} and the Generalised Dominated Convergence Theorem,  \eqref{eq:liminf3bis} is proved if we show that the integrand \eqref{eq:integrand} converges a.e.\ to $\nabla\pi(R_0 x)\cdot R_0u_0(x)$.
		
		To this aim, we first note that, up to subsequences,
		\begin{equation*}
			\lim\limits_{\eps\to 0}\nabla\hat\pi (R_\eps x)=\nabla\hat\pi (R_0 x)\quad\text{for a.e.\ }x\in \Omega.
		\end{equation*}
		Indeed, the convergence is actually in $L^1(\mc O;\R^n)$. This can be easily proved by approximating $\nabla\hat\pi$ with functions in
		$C^0(\overline{\mc O};\R^n)$. 
		Now, by Rademacher Theorem (we point out that we are working with a countable sequence of rotations $R_\eps$) for almost every $x\in \Omega$ we have
		\begin{equation*}
			\frac{\hat\pi(R_\eps x+\eps R_\eps u_\eps(x))-\hat\pi(R_\eps x)}{\eps}=\nabla\hat\pi (R_\eps x)\cdot R_\eps u_\eps(x)+\frac1\eps o(\eps|u_\eps(x)|).
		\end{equation*}
		Since $u_\eps\to u_0$ a.e., up to subsequences, and $\hat\pi\equiv\pi$ on $\mc O$, we deduce the desired convergence.
		This concludes the proof.
	\end{proof}
	
	With the result of Proposition~\ref{prop:new} at hand, we are now in a position to state and prove the liminf and the limsup inequalities for the energy functionals $\frac 1{\eps^2}\mc E_\eps$. 
	
	\begin{prop}[\textbf{Liminf inequality}]\label{prop:gammaliminf}
		Assume \ref{hyp:W1}--\ref{hyp:W5}, \ref{hyp:pi1}, and \ref{hyp:pi2}. For every $\eps\in(0,1)$ let $y_\eps\in \mathring{W}^{1,p}(\Omega;\R^n)$ be such that there exist $R_\eps\in SO(n)$ converging to $R_0\in\RR$ and the corresponding displacements $u_\eps$, defined as in \eqref{eq:displacement}, weakly converge in $\mathring{W}^{1,p}(\Omega;\R^n)$ to $u_0\in\mathring{H}^{1}(\Omega;\R^n)$. Then
		\begin{equation*}
			\EE_0(u_0,R_0)\le \liminf\limits_{\eps\to 0}\frac{1}{\eps^2}\EE_\eps(y_\eps),
		\end{equation*}
		where $\EE_0\colon \mathring{H}^{1}(\Omega;\R^n)\times\RR\to \R$ is defined by
		\begin{equation}\label{eq:E0}
			\EE_0(u_0,R_0):=
			\frac 12\int_\Omega Q(x, e(u_0)(x))\d x+\int_{\partial\Omega}\pi(R_0x)n_{\partial\Omega}(x)\cdot u_0(x)\d\mc H^{n-1}(x).
		\end{equation}
		The density $Q(x,\cdot)$ is the quadratic form given by 
		\begin{equation*}
			Q(x,F)=D^2_F W(x,I)F:F \qquad \text{for } F\in\R^{n\times n},
		\end{equation*}
		and $e(u_0)$ denotes the symmetric gradient of $u_0$.
	\end{prop}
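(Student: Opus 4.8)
The plan is to split $\frac1{\eps^2}\EE_\eps$ into its bulk and pressure parts and bound the two separately, the pressure part being essentially the content of Proposition~\ref{prop:new}. Since the inequality is trivial when $\liminf_{\eps\to0}\frac1{\eps^2}\EE_\eps(y_\eps)=+\infty$, I may pass to a subsequence (not relabeled) realizing the liminf; then $\EE_\eps(y_\eps)\le C\eps^2$, so Lemma~\ref{lemma:est1} applies: in particular $y_\eps$ satisfies \eqref{eq:estdet} and $(u_\eps)$ is bounded in $W^{1,p}(\Omega;\R^n)$. Replacing $R_\eps$ by the rotation provided by Lemma~\ref{lemma:est1} changes $u_0$ only by an infinitesimal rigid motion $x\mapsto Ax$, $A\in\R^{n\times n}_{\rm skew}$ (arguing as in Proposition~\ref{prop:compactness}), which leaves $e(u_0)$ unchanged and, by \eqref{eq:EL2} and $R_0\in\RR$, leaves the boundary term of $\EE_0$ unchanged; hence I may also assume \eqref{eq:estgp}. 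Splitting
\[
\frac1{\eps^2}\EE_\eps(y_\eps)=\frac1{\eps^2}\int_\Omega W(x,\nabla y_\eps)\d x+\frac1\eps\int_\Omega\big(\pi(y_\eps)\det\nabla y_\eps-\pi(x)\big)\d x=:a_\eps+b_\eps,
\]
it is enough, by superadditivity of the liminf, to bound $\liminf a_\eps$ and $\liminf b_\eps$ from below by the two terms in \eqref{eq:E0}.

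For the pressure term $b_\eps$ I use the function $\hat\pi$ of Lemma~\ref{lemma:hatpi1}: since $0\le\hat\pi\le\pi$ on $\R^n$, $\det\nabla y_\eps>0$, and $\pi\equiv\hat\pi$ on $\mc O$ (so that $\pi(x)=\hat\pi(x)$ on $\Omega$ and $\pi(R_\eps x)=\hat\pi(y_{R_\eps}(x))$), adding and subtracting $\hat\pi(y_{R_\eps})$ gives
\[
b_\eps\ge\frac1\eps\int_\Omega\big(\hat\pi(y_\eps)\det\nabla y_\eps-\hat\pi(y_{R_\eps})\big)\d x+\frac1\eps\int_\Omega\big(\pi(R_\eps x)-\pi(x)\big)\d x\ge\frac1\eps\int_\Omega\big(\hat\pi(y_\eps)\det\nabla y_\eps-\hat\pi(y_{R_\eps})\big)\d x,
\]
the last step because $I\in\RR$ forces $\int_\Omega\pi(x)\d x=\min_{R\in SO(n)}\int_\Omega\pi(Rx)\d x\le\int_\Omega\pi(R_\eps x)\d x$. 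Since $y_\eps$ satisfies \eqref{eq:estdet} and $R_\eps\to R_0\in\RR$, $u_\eps\wto u_0$, Proposition~\ref{prop:new} then yields $\liminf_{\eps\to0}b_\eps\ge\int_{\partial\Omega}\pi(R_0x)n_{\partial\Omega}(x)\cdot u_0(x)\d\mc H^{n-1}(x)$.

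For the bulk term, frame indifference \ref{hyp:W2} gives $W(x,\nabla y_\eps)=W(x,I+\eps\nabla u_\eps)$. On the good set $G_\eps:=\{x\in\Omega:\eps^{1/2}|\nabla u_\eps(x)|\le1\}$ the matrix $I+\eps\nabla u_\eps(x)$ lies in the $C^2$-neighborhood of $SO(n)$ from \ref{hyp:W4}, and a second-order Taylor expansion of $W(x,\cdot)$ at $I$, using $W(x,I)=0$ (\ref{hyp:W3}) and $D_FW(x,I)=0$ (as $W(x,\cdot)\ge W(x,I)=0$), gives $\frac1{\eps^2}W(x,I+\eps\nabla u_\eps)=\frac12 Q(x,\nabla u_\eps)+r_\eps(x)$ with $|r_\eps(x)|\le\frac12\sigma_\eps|\nabla u_\eps(x)|^2$ on $G_\eps$ and $\sigma_\eps\to0$ (by the regularity in \ref{hyp:W4}). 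Discarding the nonnegative contribution of $W$ on $\Omega\setminus G_\eps$,
\[
a_\eps\ge\frac12\int_\Omega Q\big(x,\chi_{G_\eps}\nabla u_\eps\big)\d x-\frac{\sigma_\eps}2\int_{G_\eps}|\nabla u_\eps|^2\d x.
\]
As in the proof of Proposition~\ref{prop:compactness}, $\chi_{G_\eps}\nabla u_\eps$ is bounded in $L^2(\Omega;\R^{n\times n})$ and converges there weakly to $\nabla u_0$; hence the last term vanishes, and by weak lower semicontinuity on $L^2$ of the nonnegative convex quadratic functional $F\mapsto\int_\Omega Q(x,F(x))\d x$, together with $Q(x,\nabla u_0)=Q(x,e(u_0))$ (again from \ref{hyp:W2}, since $Q(x,\cdot)$ vanishes on skew-symmetric matrices), one gets $\liminf_{\eps\to0}a_\eps\ge\frac12\int_\Omega Q(x,e(u_0))\d x$. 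Adding the two estimates proves the inequality.

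The genuinely delicate points — which distinguish this from the dead-load case — are that the pressure term is not a continuous perturbation of the elastic energy, so $\int_\Omega\pi(y_\eps)\det\nabla y_\eps\d x$ cannot be passed to the limit directly (this is exactly what Proposition~\ref{prop:new} provides, its hypothesis \eqref{eq:estdet} being the essential input), and the discarding of the nonnegative term $\int_\Omega(\pi(R_\eps x)-\pi(x))\d x$, which relies on $I\in\RR$; on the bulk side, the fact that $|\eps\nabla u_\eps|$ is controlled only on $G_\eps$ forces the restriction to $G_\eps$, the use of $W\ge0$ elsewhere, and the $L^2$-compactness of $\chi_{G_\eps}\nabla u_\eps$ established in Proposition~\ref{prop:compactness}.
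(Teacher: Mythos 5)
Your proof is correct and follows essentially the same route as the paper's: both pass to a subsequence realizing the liminf, replace $R_\eps$ by the rotations of Lemma~\ref{lemma:est1} (using the invariance of $\EE_0(\cdot,R_0)$ under infinitesimal rigid motions), split the renormalized energy into bulk plus pressure parts, apply Proposition~\ref{prop:new} to the pressure part, and discard the nonnegative remainder $\frac1\eps\int_\Omega(\pi(R_\eps x)-\pi(x))\d x$. The only cosmetic difference is that you spell out the Taylor-expansion and weak-lower-semicontinuity argument for the bulk term on the good set $G_\eps$, whereas the paper cites the analogous step in \cite[Proof of Theorem~2.4]{AgoDMDS}.
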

	
	\begin{rmk}\label{rmk:invariant}
		For any optimal rotation $R_0\in\RR$ the functional $\EE_0(\cdot, R_0)$ is invariant under perturbations by infinitesimal rigid motions. Indeed, if $u_0'(x)=u_0(x)+Ax$ with $A\in \R^{n\times n}_{\rm skew}$, 
		then clearly $e(u_0')=e(u_0)$ and by \eqref{eq:EL2}
		\begin{equation*}
			\int_{\partial\Omega}\pi(R_0x)n_{\partial\Omega}(x)\cdot u'_0(x)\d\mc H^{n-1}(x)=
			\int_{\partial\Omega}\pi(R_0x)n_{\partial\Omega}(x)\cdot u_0(x)\d\mc H^{n-1}(x).
		\end{equation*}
	\end{rmk}
	
	\begin{proof}[Proof of Proposition~\ref{prop:gammaliminf}]
		Without loss of generality we can assume 
		\begin{equation*}
			\liminf\limits_{\eps\to 0}\frac{1}{\eps^2}\EE_\eps(y_\eps)<+\infty,
		\end{equation*}	
		so that $y_\eps\in Y^p$ and, up to subsequence, $\EE_\eps(y_\eps)\leq C\eps^2$. 
		By Lemma~\ref{lemma:est1} and Proposition~\ref{prop:compactness} there exist a (possibly different) sequence $(R_\eps')\subset SO(n)$ such that, up to subsequences, $R'_\eps\to R_0$, the corresponding displacements $u_\eps'$ satisfy \eqref{eq:estgp} and, up to subsequences, weakly converge to $u_0+Ax$ for some $A\in \R^{n\times n}_{\rm skew}$. 
		However, by Remark~\ref{rmk:invariant} we can assume, without loss of generality, that $R_\eps=R_\eps'$ and so, $u_\eps=u_\eps'$ and $A=0$.

		Let $\widehat{\mc E}_\eps$ be the auxiliary energy defined as in \eqref{eq:energy1} with $\pi$ replaced by the function $\hat\pi$ given by Lemma~\ref{lemma:hatpi1}. By the properties of $\hat\pi$ we have
		\begin{align}
			\frac{1}{\eps^2}\EE_\eps(y_\eps)\ge \frac{1}{\eps^2}\widehat\EE_\eps(y_\eps) = & \ \frac{1}{\eps^2}\int_\Omega W(x,\nabla y_\eps)\d x+\frac 1\eps\int_{\Omega}		(\hat\pi(y_\eps)\det\nabla y_\eps-\hat\pi(y_{R_\eps}))\d x \nonumber \\
			& + \frac 1\eps\int_{\Omega} (\hat\pi(y_{R_\eps})-\hat\pi(x))\d x. \label{eq:terms}
		\end{align}	
		Arguing as in \cite[Proof of Theorem~2.4]{AgoDMDS} one can prove that
		\begin{equation*}
			\liminf\limits_{\eps\to 0}\frac{1}{\eps^2}\int_\Omega W(x,\nabla y_\eps)\d x\ge 	\frac 12\int_\Omega Q(x,e(u_0)(x))\d x.
		\end{equation*}
		Since condition \eqref{eq:estdet} is satisfied by Lemma~\ref{lemma:est1}, we can apply Proposition~\ref{prop:new} and we obtain
		\begin{equation*}
			\liminf_{\eps\to 0} \frac1\eps \int_\Omega \left(\hat\pi(y_\eps)\det\nabla y_\eps-\hat\pi(y_{R_\eps})\right)\d x 
			\geq \int_{\partial\Omega}\pi(R_0x)n_{\partial\Omega}(x)\cdot u_0(x)\d\mc H^{n-1}(x).
		\end{equation*}
		Finally, assumption \eqref{eq:IinR} guarantees that the last term in \eqref{eq:terms} is nonnegative. This proves the desired inequality.
	\end{proof}
	
	\begin{prop}[\textbf{Limsup inequality}]\label{prop:gammalimsup}
		Assume \ref{hyp:W1}--\ref{hyp:W5}, \ref{hyp:pi1}, and \ref{hyp:pi2}.
		For every $(u_0,R_0)\in \mathring{H}^{1}(\Omega;\R^n)\times\RR$ there exist $(u_\eps,R_\eps)\in \mathring{W}^{1,p}(\Omega;\R^n)\times SO(n)$ such that $u_\eps\wto u_0$ weakly in $\mathring{W}^{1,p}(\Omega;\R^n)$, $R_\eps\to R_0$ and, setting $y_\eps(x):=R_\eps(x+\eps u_\eps(x))$, there holds
		\begin{equation*}	
			\limsup\limits_{\eps\to 0}\frac{1}{\eps^2}\EE_\eps(y_\eps)\le \EE_0(u_0,R_0),
		\end{equation*}
		where $\EE_0$ is the functional defined in \eqref{eq:E0}.
	\end{prop}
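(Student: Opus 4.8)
The plan is to construct the recovery sequence in two steps: first for $u_0$ smooth up to the boundary, where the naive choice $R_\eps\equiv R_0$, $u_\eps\equiv u_0$ works, and then for a general $u_0\in\mathring{H}^1(\Omega;\R^n)$ by density and a diagonal argument. This two-step structure is forced by the low regularity of $W$: the Taylor expansion of $W(x,I+\eps\nabla u_0)$ at the identity is legitimate only when $\nabla u_0\in L^\infty$, and similarly the pressure term is under control only when the image $y_\eps(\Omega)$ stays inside the region where the truncated pressure $\hat\pi$ of Lemma~\ref{lemma:hatpi1} coincides with $\pi$.

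For $u_0\in C^\infty(\overline{\mc O};\R^n)\cap\mathring{H}^1(\Omega;\R^n)$ I would set $R_\eps:=R_0$, $u_\eps:=u_0$ and $y_\eps(x):=R_0(x+\eps u_0(x))$, so that $u_\eps\wto u_0$ and $R_\eps\to R_0$ trivially. Since $\|\nabla u_0\|_\infty<\infty$, for $\eps$ small $\det\nabla y_\eps=\det(I+\eps\nabla u_0)>0$ a.e.\ (hence $y_\eps\in Y^p$), and by \eqref{eq:sviluppodet}--\eqref{eq:estik} one has $|\det\nabla y_\eps-1|\le C\eps$ uniformly, so that $\{|\det\nabla y_\eps-1|\le1\}=\Omega$ for small $\eps$ and $y_\eps$ satisfies \eqref{eq:estdet}. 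For the elastic term I would use frame indifference \ref{hyp:W2} to rewrite $W(x,\nabla y_\eps)=W(x,I+\eps\nabla u_0)$ and then a Taylor expansion at $I$, uniform in $x$ by \ref{hyp:W4} and with $W(x,I)=0$, $D_FW(x,I)=0$ (the latter since $I$ minimizes $W(x,\cdot)$), to get $\frac1{\eps^2}\int_\Omega W(x,\nabla y_\eps)\d x\to\frac12\int_\Omega Q(x,\nabla u_0)\d x=\frac12\int_\Omega Q(x,e(u_0))\d x$, the last identity being the standard fact that $Q(x,\cdot)$ vanishes on $\R^{n\times n}_{\rm skew}$. For the pressure term I would observe that for $\eps$ small $y_\eps(\Omega)$ is contained in the neighbourhood of $\overline{\mc O}$ where $\hat\pi=\pi$, so that $\widehat{\EE}_\eps(y_\eps)=\EE_\eps(y_\eps)$, and then split $\frac1{\eps^2}\widehat{\EE}_\eps(y_\eps)$ into the elastic term above, the quantity $\frac1\eps\int_\Omega(\hat\pi(y_\eps)\det\nabla y_\eps-\hat\pi(y_{R_\eps}))\d x$, and the remainder $\frac1\eps\int_\Omega(\hat\pi(y_{R_\eps})-\hat\pi(x))\d x=\frac1\eps\int_\Omega(\pi(R_0x)-\pi(x))\d x$. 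This remainder \emph{vanishes}, since $R_0,I\in\RR$ imply $\int_\Omega\pi(R_0x)\d x=\int_\Omega\pi(x)\d x$; moreover, since \eqref{eq:estdet} holds, $R_\eps\to R_0\in\RR$, $u_\eps\wto u_0\in\mathring{H}^1(\Omega;\R^n)$, Proposition~\ref{prop:new} applies — and, because $\{|\det\nabla y_\eps-1|\le1\}=\Omega$ for small $\eps$, its $\liminf$ is actually a limit — giving $\frac1\eps\int_\Omega(\hat\pi(y_\eps)\det\nabla y_\eps-\hat\pi(y_{R_\eps}))\d x\to\int_{\partial\Omega}\pi(R_0x)n_{\partial\Omega}(x)\cdot u_0(x)\d\mc H^{n-1}(x)$. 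Hence $\frac1{\eps^2}\EE_\eps(y_\eps)\to\EE_0(u_0,R_0)$, which is stronger than the required $\limsup$ bound.

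For a general $u_0\in\mathring{H}^1(\Omega;\R^n)$ I would approximate it in $H^1$ by functions $u_0^k\in C^\infty(\overline{\mc O};\R^n)$ with zero mean (density of $C^\infty(\overline\Omega)$ followed by subtraction of the average). The functional $\EE_0(\cdot,R_0)$ is continuous on $\mathring{H}^1(\Omega;\R^n)$ — the quadratic part because $|Q(x,F)|\le C|F|^2$ by \ref{hyp:W4}, the boundary part because the trace is continuous from $H^1(\Omega;\R^n)$ to $L^2(\partial\Omega;\R^n)$ and $\pi(R_0\cdot)n_{\partial\Omega}\in L^\infty(\partial\Omega;\R^n)$ by \ref{hyp:pi1} — so $\EE_0(u_0^k,R_0)\to\EE_0(u_0,R_0)$. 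Using the Step~1 sequences $(y_\eps^k)$ for the $u_0^k$, I would then pick $\eps_k\downarrow0$ so that $y_\eps^k\in Y^p$ and $|\frac1{\eps^2}\EE_\eps(y_\eps^k)-\EE_0(u_0^k,R_0)|\le 1/k$ for all $\eps<\eps_k$, and set $u_\eps:=u_0^{k(\eps)}$, $R_\eps:=R_0$, $y_\eps:=y_\eps^{k(\eps)}$, with $k(\eps)\to\infty$ as $\eps\to0$; then $u_\eps\to u_0$ in $H^1$, hence weakly in $\mathring{W}^{1,p}(\Omega;\R^n)$ since $p\le2$, and $\frac1{\eps^2}\EE_\eps(y_\eps)\to\EE_0(u_0,R_0)$.

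I expect the genuinely delicate point — the asymptotics of the pressure potential along the recovery sequence — to be already packaged in Proposition~\ref{prop:new}; granting that, what remains is bookkeeping, the two points deserving attention being the swap of $\hat\pi$ for $\pi$ along $y_\eps$ for $\eps$ small, and the use of $I,R_0\in\RR$ to cancel the otherwise divergent term $\frac1\eps\int_\Omega(\pi(R_0x)-\pi(x))\d x$. The diagonalization itself is routine, since everything takes place on bounded subsets of $\mathring{W}^{1,p}(\Omega;\R^n)$.
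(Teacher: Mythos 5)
Your proposal is correct and uses the same key ingredients as the paper: the choice $R_\eps=R_0$, the cancellation $\int_\Omega\pi(R_0x)\d x=\int_\Omega\pi(x)\d x$ coming from $I,R_0\in\RR$, the elastic Taylor expansion as in \cite{AgoDMDS}, and Proposition~\ref{prop:new} for the pressure term (with the observation that $\{|\det\nabla y_\eps-1|\le1\}=\Omega$ upgrades the liminf to a limit). The organizational difference is genuine but modest: the paper proves the limsup bound in a single step by mollifying $u_0$ to $u_\eps\in\mathring W^{1,\infty}$ with the growth control $\eps^{1/2}\|u_\eps\|_{W^{1,\infty}}\le1$ and $u_\eps\to u_0$ strongly in $H^1$, which simultaneously yields $|\det\nabla y_\eps-1|\le C\eps^{1/2}$ and the required convergence without ever invoking density and diagonalization. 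Your two-step version (constant sequence for smooth data, then approximation) is a standard and perfectly sound alternative; it buys a cleaner Step~1 at the cost of an extra diagonal extraction and the need to check $H^1$-continuity of $\EE_0(\cdot,R_0)$, both of which you handle correctly. One small point worth spelling out in the diagonalization: the recovery sequences of Step~1 need the estimate $|\det\nabla y_\eps^k-1|\le C_k\eps$ with a constant depending on $\|\nabla u_0^k\|_\infty$, so the thresholds $\eps_k$ must be chosen small enough in terms of $C_k$ to keep $\{|\det\nabla y_\eps^k-1|\le1\}=\Omega$; this is exactly the dependence that the paper's intrinsic bound $\eps^{1/2}\|u_\eps\|_{W^{1,\infty}}\le1$ absorbs automatically.
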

	
	\begin{proof}
		Let $(u_0,R_0)\in \mathring{H}^{1}(\Omega;\R^n)\times\RR$.
		By mollification there exists $(u_\eps)\subset\mathring{W}^{1,\infty}(\Omega;\R^n)$ such that
		\begin{equation}\label{eq:strong}
			u_\eps\to u_0\quad\text{strongly in }H^1(\Omega;\R^n)\qquad\text{ and }\qquad\eps^\frac 12\|u_\eps\|_{W^{1,\infty}}\le 1.
		\end{equation}
		We define $R_\eps:=R_0$, so that $y_\eps(x)=R_0(x+\eps u_\eps(x))$.
		
		We first observe that $y_\eps\in Y^p$ for $\eps$ small enough. Indeed, by \eqref{eq:baricenter} it has zero-average and by \eqref{eq:sviluppodet} it satisfies
		\begin{equation}\label{eq:detyyy}
			\det\nabla y_\eps(x)=\det(I+\eps\nabla u_\eps(x))=1+\sum_{k=1}^n\eps^k\iota_k(\nabla u_\eps(x))\qquad\text{for a.e.\ }x\in \Omega.
		\end{equation} 
		Since by \eqref{eq:estik} we have that for $k=1,\dots,n$ 
		\begin{equation*}
			|\eps^k\iota_k(\nabla u_\eps(x))|\le C\eps^k|\nabla u_\eps(x)|^k\le C\eps^\frac k2\le C\eps^\frac 12\qquad\text{ for a.e.\ }x\in \Omega,
		\end{equation*}
		for $\eps$ small enough we obtain
		\begin{equation}\label{eq:small1}
			|\det\nabla y_\eps(x)-1|\leq C\eps^\frac 12\le \frac 12\qquad\text{ for a.e.\ }x\in \Omega,
		\end{equation}
		and thus, $\det\nabla y_\eps>0$ a.e.\ in $\Omega$.
		
		By \eqref{eq:strong} we have that for $\eps$ small enough the set $y_\eps(\Omega)$ is contained in the neighborhood of $\mc O$ where $\pi$ and $\hat\pi$ coincide.
		Therefore, using also that $R_0\in \RR$, we can write
		\begin{equation*}
			\frac{1}{\eps^2}\EE_\eps(y_\eps)=\frac{1}{\eps^2}\int_{\Omega}W(x,I{+}\eps\nabla u_\eps(x))\d x+\frac 1\eps\int_{\Omega}\left(\hat \pi(y_\eps)\det \nabla y_\eps{-}\hat\pi(y_{R_0})\right)\d x.
		\end{equation*} 
		Arguing as in \cite[Proof of Theorem~2.4]{AgoDMDS}, one can show that
		\begin{equation*}
			\limsup\limits_{\eps\to 0}\frac{1}{\eps^2}\int_{\Omega}W(x,I+\eps\nabla u_\eps(x))\d x\le \frac 12\int_\Omega Q(x, e(u_0)(x))\d x.
		\end{equation*}
		On the other hand, by \eqref{eq:detyyy} we have that
		\begin{equation*}
			|\det\nabla y_\eps(x)-1|\leq \eps|\nabla u_\eps(x)| +C\eps \qquad\text{for a.e.\ }x\in \Omega,
		\end{equation*} 
		hence condition \eqref{eq:estdet} is satisfied and we can apply Proposition~\ref{prop:new}. By \eqref{eq:small1} we deduce 
		\begin{equation*}
			\lim\limits_{\eps\to 0}\frac 1\eps\int_{\Omega}\left(\hat \pi(y_\eps)\det \nabla y_\eps){-}\hat\pi(y_{R_0})\right)\d x=\int_{\partial\Omega}\pi(R_0x)n_{\partial\Omega}(x)\cdot u_0(x)\d\mc H^{n-1}(x).
		\end{equation*}
		This concludes the proof.
	\end{proof}
	
	\begin{rmk}\label{rmk:injectiveinf}
		If we include a.e.\ injectivity in the definition of the space $Y^p$ of admissible deformations (see Remark~\ref{rmk:injective}), the limsup inequality can be proved by means of the same recovery sequence. Indeed, by \cite[Theorem~5.5-1(b)]{Ciarlet} the deformations $y_\eps$ are a.e.\ injective owing to \eqref{eq:strong}.
	\end{rmk}
	
	Combining together the previous propositions, we can prove the main result of this section. It ensures that almost minimizers of the nonlinear energy strongly converge to minimizers of the limiting energy.
	
	\begin{thm}[\textbf{Convergence of almost minimizers}]\label{thm:convmin}
		Assume \ref{hyp:W1}--\ref{hyp:W5}, \ref{hyp:pi1}, and \ref{hyp:pi2}.
		If $(y_\eps)$ is a sequence of almost minimizers for the energies $\EE_\eps$, that is, 
		\begin{equation}\label{eq:almmin}
			\EE_\eps(y_\eps)\le \inf\limits_{\mathring{W}^{1,p}(\Omega;\R^n)}\EE_\eps+o(\eps^2), 
		\end{equation}
		then there exist $R_\eps\in SO(n)$ such that, up to passing to a subsequence, we have
		\begin{itemize}
			\item $u_\eps\to u_0$ strongly in $\mathring{W}^{1,p}(\Omega;\R^n)$ with $u_0\in \mathring{H}^{1}(\Omega;\R^n)$,\smallskip
			\item $R_\eps\to R_0$ with $R_0\in \RR$,
		\end{itemize}
		as $\eps\to0$.	
		Furthermore, the pair $(u_0,R_0)$ is a minimizer of $\EE_0$ on $\mathring{H}^{1}(\Omega;\R^n)\times \RR$ and
		\begin{equation}\label{eq:minEE_0}
			\lim\limits_{\eps\to 0} \frac{1}{\eps^2} \Big(\inf\limits_{\mathring{W}^{1,p}(\Omega;\R^n)}\EE_\eps\Big) =\min
			\big\{ \EE_0(u,R) : \ (u,R)\in\mathring{H}^{1}(\Omega;\R^n)\times \RR \big\}.
		\end{equation}
	\end{thm}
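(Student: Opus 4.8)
The plan is to run the standard compactness plus $\Gamma$-convergence machinery and then upgrade weak convergence to strong convergence, the key extra tool being the mixed-growth rigidity estimate of Theorem~\ref{lemma:rigidity2}. First, since $\inf\EE_\eps\le 0$ by Corollary~\ref{cor:inf}, the almost-minimality \eqref{eq:almmin} gives $\EE_\eps(y_\eps)\le C\eps^2$ for $\eps$ small, so Lemma~\ref{lemma:est1} and Proposition~\ref{prop:compactness} apply and provide rotations $R_\eps$ and displacements $u_\eps$ with $y_\eps=R_\eps(x+\eps u_\eps)$ such that, up to subsequences, $u_\eps\wto u_0$ weakly in $\mathring W^{1,p}(\Omega;\R^n)$ with $u_0\in\mathring H^1(\Omega;\R^n)$, and $R_\eps\to R_0\in\RR$. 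The liminf inequality (Proposition~\ref{prop:gammaliminf}) gives $\EE_0(u_0,R_0)\le\liminf\frac1{\eps^2}\EE_\eps(y_\eps)$, while for any $(u,R)\in\mathring H^1(\Omega;\R^n)\times\RR$ the recovery sequence of Proposition~\ref{prop:gammalimsup}, together with $\inf\EE_\eps\le\EE_\eps(\cdot)$, gives $\limsup\frac1{\eps^2}\inf\EE_\eps\le\EE_0(u,R)$. A standard chain of these inequalities, combined with \eqref{eq:almmin} and the infimum over $(u,R)$, then shows that $(u_0,R_0)$ minimizes $\EE_0$, that \eqref{eq:minEE_0} holds, and that $\frac1{\eps^2}\EE_\eps(y_\eps)\to\EE_0(u_0,R_0)$; since $\widehat\EE_\eps\le\EE_\eps$ and the proof of Proposition~\ref{prop:gammaliminf} supplies the matching lower bound, also $\frac1{\eps^2}\widehat\EE_\eps(y_\eps)\to\EE_0(u_0,R_0)$.

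The heart of the matter is the strong convergence, and the first point is to show that the elastic energy is exactly of order $\eps^2$. Splitting $\frac1{\eps^2}\widehat\EE_\eps(y_\eps)$ as in \eqref{eq:terms} into $\frac1{\eps^2}\int_\Omega W(x,\nabla y_\eps)\d x$, the rescaled pressure term $\frac1\eps\int_\Omega(\hat\pi(y_\eps)\det\nabla y_\eps-\hat\pi(y_{R_\eps}))\d x$, and $\frac1\eps\int_\Omega(\pi(R_\eps x)-\pi(x))\d x$, the first and third terms are nonnegative (the third by \eqref{eq:IinR}) and the $\liminf$ of the second is $\ge\int_{\partial\Omega}\pi(R_0x)n_{\partial\Omega}\cdot u_0\,\d\mc H^{n-1}$ by Proposition~\ref{prop:new} (whose hypothesis \eqref{eq:estdet} holds by Lemma~\ref{lemma:est1}); combined with the lower bound $\liminf\frac1{\eps^2}\int_\Omega W(x,\nabla y_\eps)\d x\ge\frac12\int_\Omega Q(x,e(u_0))\d x$ already used in Proposition~\ref{prop:gammaliminf}, a squeezing argument forces $\frac1{\eps^2}\int_\Omega W(x,\nabla y_\eps)\d x\to\frac12\int_\Omega Q(x,e(u_0))\d x$. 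On the good set $G_\eps=\{\eps^{1/2}|\nabla u_\eps|\le1\}$, where $\dist(\nabla y_\eps;SO(n))\le|\nabla y_\eps-R_\eps|=\eps|\nabla u_\eps|\le\eps^{1/2}$, frame indifference and a second-order Taylor expansion of $W$ at the identity (as in \cite[Proof of Theorem~2.4]{AgoDMDS}) give $\frac1{\eps^2}\int_{G_\eps}W(x,\nabla y_\eps)\d x=\frac12\int_{G_\eps}Q(x,e(u_\eps))\d x+o(1)$, the error being infinitesimal by the $C^2$ regularity and the bound on $\int_{G_\eps}|\nabla u_\eps|^2\d x$ from the proof of Proposition~\ref{prop:compactness}. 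Since $\frac1{\eps^2}\int_{\Omega\setminus G_\eps}W\ge0$, this yields $\limsup\int_{G_\eps}Q(x,e(u_\eps))\d x\le\int_\Omega Q(x,e(u_0))\d x$; as $\chi_{G_\eps}e(u_\eps)\wto e(u_0)$ weakly in $L^2$ (again from the proof of Proposition~\ref{prop:compactness}) and $v\mapsto\int_\Omega Q(x,v)\d x$ is convex and nonnegative, lower semicontinuity gives the reverse inequality, hence $\int_{G_\eps}Q(x,e(u_\eps))\d x\to\int_\Omega Q(x,e(u_0))\d x$. Expanding $\int_\Omega Q(x,\chi_{G_\eps}e(u_\eps)-e(u_0))\d x$ and invoking the coercivity of $Q(x,\cdot)$ on $\R^{n\times n}_{\rm sym}$ (a consequence of \ref{hyp:W5}), I conclude $\chi_{G_\eps}e(u_\eps)\to e(u_0)$ strongly in $L^2(\Omega;\R^{n\times n})$.

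It remains to control $\nabla u_\eps$ on the bad set and to pass from the symmetric gradient to the full gradient, which is where Theorem~\ref{lemma:rigidity2} enters. Decomposing $\dist(\nabla y_\eps;SO(n))$ into the parts where it is $\le1$ and $>1$, assumption \ref{hyp:W5} with \eqref{eq:propgp} and $\int_\Omega W(x,\nabla y_\eps)\d x\le C\eps^2$ produces an $L^2$-piece of norm $O(\eps)$ and an $L^p$-piece of norm $O(\eps^{2/p})$; Theorem~\ref{lemma:rigidity2} with exponents $p$ and $2$ then yields $\widetilde R_\eps\in SO(n)$ with $\nabla y_\eps-\widetilde R_\eps=g_1^\eps+g_2^\eps$, $\|g_1^\eps\|_p\le C\eps^{2/p}$, $\|g_2^\eps\|_2\le C\eps$. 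Since $\int_\Omega|\nabla y_\eps-R_\eps|\d x$ and $\int_\Omega|\nabla y_\eps-\widetilde R_\eps|\d x$ are both $O(\eps)$, one gets $|R_\eps-\widetilde R_\eps|\le C\eps$, so that $\nabla u_\eps=\frac1\eps R_\eps^T(\widetilde R_\eps-R_\eps)+\frac1\eps R_\eps^Tg_1^\eps+\frac1\eps R_\eps^Tg_2^\eps=:c_\eps+h_\eps^1+h_\eps^2$, with $c_\eps$ a bounded constant matrix, $h_\eps^1\to0$ in $L^p$ and $h_\eps^2$ bounded in $L^2$. Because $|\Omega\setminus G_\eps|\le C\eps^{p/2}$ by \eqref{eq:goodsetvanish} and $\frac1p>\frac12$, each of $c_\eps$, $h_\eps^1$, $h_\eps^2$ has vanishing $L^p$-norm on $\Omega\setminus G_\eps$, so $\chi_{\Omega\setminus G_\eps}\nabla u_\eps\to0$ in $L^p$; together with the previous step this gives $e(u_\eps)\to e(u_0)$ strongly in $L^p$. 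Finally, writing $v_\eps=u_\eps-u_0$, we have $e(v_\eps)\to0$ in $L^p$ and $v_\eps\to0$ in $L^p$ by the compact Sobolev embedding, so Korn's second inequality gives $\nabla v_\eps\to0$ in $L^p$, i.e.\ $u_\eps\to u_0$ strongly in $\mathring W^{1,p}(\Omega;\R^n)$.

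I expect the main obstacle to lie in the last two steps: on the one hand, extracting the $\eps^2$-bound — in fact the $\eps^2$-convergence — of the elastic energy from the mere $\Gamma$-convergence of the total energy, which, because the pressure term is not a continuous perturbation, requires carefully isolating the three contributions and ruling out that the pressure term absorbs elastic energy in the limit; on the other hand, controlling $\nabla u_\eps$ on the region where it is large, where the weak $p$-coercivity \ref{hyp:W5} is by itself insufficient and the mixed-growth rigidity estimate of Theorem~\ref{lemma:rigidity2} is essential.
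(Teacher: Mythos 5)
Your treatment of the first part of the theorem (the $\eps^2$ bound on the infimum, identification of $(u_0,R_0)$ as a minimizer of $\EE_0$, formula \eqref{eq:minEE_0}, and the squeezing argument giving $\frac1{\eps^2}\int_\Omega W(x,\nabla y_\eps)\,\d x\to\frac12\int_\Omega Q(x,e(u_0))\,\d x$) mirrors the paper's argument and is correct. The derivation of $\chi_{G_\eps}e(u_\eps)\to e(u_0)$ strongly in $L^2$ is exactly the paper's claim (a), and your argument for it is sound.

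Where you genuinely diverge from the paper is in controlling $\nabla u_\eps$ on $\Omega\setminus G_\eps$. The paper first proves equi-integrability of $\frac{1}{\eps^p}\dist^p(\nabla y_\eps;SO(n))$ (claim (b), imported from \cite{AgoDMDS}) and then uses it to cut the distance at an \emph{adaptive} threshold $M_\eta$ — this produces an $L^p$-piece of arbitrarily small $L^p$-norm and an $L^\alpha$-piece, $\alpha>p$ arbitrary, and Theorem~\ref{lemma:rigidity2} then yields equi-integrability of $|\nabla u_\eps|^p$ (claim (c)), from which the thesis follows by Vitali and Korn. You instead cut the distance at the \emph{fixed} threshold $1$, obtaining directly from \ref{hyp:W5} and \eqref{eq:propgp} an $L^p$-piece of norm $O(\eps^{2/p})$ and an $L^2$-piece of norm $O(\eps)$, and you apply Theorem~\ref{lemma:rigidity2} once with $p_1=p$, $p_2=2$. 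This is a shorter route that bypasses claims (b) and (c) entirely, replacing equi-integrability by the explicit rates $\eps^{2/p}$ and $\eps$ together with $|\Omega\setminus G_\eps|\le C\eps^{p/2}$, and the exponent bookkeeping checks out when $p<2$.

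There is, however, a genuine gap at $p=2$, which is allowed by \ref{hyp:W5} and covered by the theorem. First, Theorem~\ref{lemma:rigidity2} requires strictly $p_1<p_2$, so it cannot be invoked with $p_1=p=2=p_2$; for $p=2$ your decomposition of $\dist(\nabla y_\eps;SO(n))$ into $f_1\in L^p=L^2$ and $f_2\in L^2$ is just the ordinary $L^2$ rigidity and gives nothing new. Second, and independently, your H\"older estimate $\|h_\eps^2\|_{L^p(\Omega\setminus G_\eps)}^p\le\|h_\eps^2\|_2^p\,|\Omega\setminus G_\eps|^{1-\frac p2}$ degenerates at $p=2$ since the exponent $1-\frac p2$ vanishes — you yourself invoke $\tfrac1p>\tfrac12$, i.e.\ $p<2$. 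In the $p=2$ case one really does need equi-integrability of $|\nabla u_\eps|^2$, which is precisely what the paper's claims (b) and (c) deliver by taking $\alpha>2$ in the $E^{\eps,\eta}$-splitting. You should either restrict your streamlined argument to $p\in(1,2)$ and handle $p=2$ separately by the paper's route, or adopt the adaptive-threshold argument throughout.
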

	
	\begin{proof}
		Let $(y_\eps)$ be a sequence of almost minimizers. By Corollary~\ref{cor:inf} we have that 
		\begin{equation*}
			\inf\limits_{\mathring{W}^{1,p}(\Omega;\R^n)}\EE_\eps \le 0, 
		\end{equation*}
		hence by Proposition~\ref{prop:compactness} there exist $u_0\in \mathring{H}^{1}(\Omega;\R^n)$ and $R_0\in \RR$ such that, up to a subsequence,
		\begin{equation*}
			u_\eps\wto u_0 \quad\text{weakly in }\mathring{W}^{1,p}(\Omega;\R^n)\qquad\text{ and }\qquad R_\eps\to R_0.
		\end{equation*}
		We now show that $(u_0,R_0)$ is a minimizer of $\EE_0$. To this aim let $(v,S)\in \mathring{H}^{1}(\Omega;\R^n)\times \RR$ and let $(v_\eps,S_\eps)$ be a recovery sequence for $(v,S)$, as in Proposition~\ref{prop:gammalimsup}. Let $z_\eps(x):=S_\eps(x+\eps v_\eps(x))$. By Proposition~\ref{prop:gammaliminf} we have
		\begin{align}\label{eq:vS}
			\EE_0(u_0,R_0)&\le \liminf\limits_{\eps\to 0}\frac{1}{\eps^2}\EE_\eps(y_\eps)\le
			\liminf\limits_{\eps\to 0}\Big(\inf\limits_{\mathring{W}^{1,p}(\Omega;\R^n)}\frac{1}{\eps^2}\EE_\eps\Big) \le
			\limsup\limits_{\eps\to 0}\Big(\inf\limits_{\mathring{W}^{1,p}(\Omega;\R^n)}\frac{1}{\eps^2}\EE_\eps\Big) \nonumber \\
			& \le \limsup\limits_{\eps\to 0}\frac{1}{\eps^2}\EE_\eps(z_\eps)\le\EE_0(v,S).
		\end{align}
		This implies that $\EE_0$ is minimized at $(u_0,R_0)$ and, as a consequence, \eqref{eq:minEE_0} hold.
		
		To conclude, it remains to prove that $u_\eps\to u_0$ strongly in $W^{1,p}(\Omega;\R^n)$.
		We adapt the argument in \cite[Theorem~2.5]{AgoDMDS} to our framework. We claim that the following properties hold:
		\begin{itemize}
			\item[(a)] $\chi_{G_\eps}e(u_\eps)\to e(u_0)$ strongly in $L^2(\Omega;\R^{n\times n}_{\rm sym})$, where the set $G_\eps$ is defined as in \eqref{eq:goodset};
			\item[(b)] the sequence $\Big(\frac{1}{\eps^p}\dist^p(\nabla y_\eps;SO(n))\Big)$ is equi-integrable;
			\item[(c)] the sequence $(|\nabla u_\eps|^p)$ is equi-integrable.
		\end{itemize}
		The thesis follows from (a) and (b), by using Vitali's convergence theorem together with Korn's second inequality, see \cite[proof of Theorem~2.5]{AgoDMDS} for more details.
		
		We now prove (a). By choosing $(v,S)=(u_0,R_0)$ in \eqref{eq:vS} we deduce
		\begin{equation*}
			\lim\limits_{\eps\to 0}\frac{1}{\eps^2}\EE_\eps(y_\eps)=\EE_0(u_0,R_0).
		\end{equation*}
		By \eqref{eq:terms} and assumption \eqref{eq:IinR} we have
		\begin{equation*}
			\frac{1}{\eps^2}\EE_\eps(y_\eps)\ge \frac{1}{\eps^2}\int_{\Omega} W(x,\nabla y_\eps)\d x+\frac1\eps \int_\Omega \left(\hat\pi(y_\eps)\det\nabla y_\eps-\hat\pi(y_{R_\eps})\right)\d x.
		\end{equation*}
		Therefore, letting $\eps \to 0$ and applying Proposition~\ref{prop:new} yield
		\begin{align*}
			\limsup\limits_{\eps\to 0}\frac{1}{\eps^2}\int_{\Omega} W(x,\nabla y_\eps)\d x&\le \EE_0(u_0,R_0)-\int_{\partial\Omega}\pi(R_0x)n_{\partial\Omega}(x)\cdot u_0(x)\d\mc H^{n-1}(x)\\
			&=\frac 12\int_\Omega Q(x, e(u_0))\d x.
		\end{align*}
		On the other hand, by Taylor expansion of $W$ around $I$ and by the weak convergence of $\chi_{G_\eps}e(u_\eps)$ to $e(u_0)$ in $L^2(\Omega;\R^{n\times n}_{\rm sym})$ (see
		property (ii) in the proof of Proposition~\ref{prop:compactness}) we obtain
		\begin{align*}
			\limsup\limits_{\eps\to 0}\frac{1}{\eps^2}\int_{\Omega} W(x,\nabla y_\eps)\d x&\ge 	\limsup\limits_{\eps\to 0}\frac 12\int_\Omega Q(x,\chi_{G_\eps}e(u_\eps))\d x\\
			&\ge \liminf\limits_{\eps\to 0}\frac 12\int_\Omega Q(x,\chi_{G_\eps}e(u_\eps))\d x\\
			&\ge \frac 12\int_\Omega Q(x,e(u_0))\d x,
		\end{align*}
		see, e.g., \cite{AgoDMDS,DMNegPerc,MaorMora}.
		Combining the previous inequalities yields
		\begin{equation}\label{eq:limit}
			\lim\limits_{\eps\to 0}\frac 12\int_\Omega Q(x,\chi_{G_\eps}e(u_\eps))\d x=\frac 12\int_\Omega Q(x,e(u_0))\d x.
		\end{equation}
		Since $\chi_{G_\eps}e(u_\eps)\wto e(u_0)$ weakly in $L^2(\Omega;\R^{n\times n}_{\rm sym})$
		and the quadratic form $Q(x,\cdot)$ is coercive on $\R^{n\times n}_{\rm sym}$ by \ref{hyp:W2}, \ref{hyp:W4}, and \ref{hyp:W5}, equation \eqref{eq:limit} proves claim (a).
		
		To show claim (b) one can repeat verbatim the proof in \cite[Theorem~2.5]{AgoDMDS}.
		
		We now prove claim (c). Given $\alpha>p$ and $\eta>0$, we deduce by (b) that there exists $M_\eta>0$ such that, setting 
		\begin{equation*}
			f_1^{\eps,\eta}(x):=\dist(\nabla y_\eps(x);SO(n))\chi_{E^{\eps,\eta}}(x),\qquad f_2^{\eps,\eta}(x):=\dist(\nabla y_\eps(x);SO(n))\chi_{\Omega\setminus E^{\eps,\eta}}(x),
		\end{equation*}
		where
		\begin{equation*}
			E^{\eps,\eta}:=\left\{x\in\Omega: \displaystyle \frac{1}{\eps^p}\dist^p(\nabla y_\eps(x);SO(n))\ge M_\eta\right\},
		\end{equation*}
		we have that
		\begin{equation}\label{eq:serve}
			\left\|\frac{f_1^{\eps,\eta}}{\eps}\right\|_p^p\le \eta\qquad\text{ and }\qquad \left\|\frac{f_2^{\eps,\eta}}{\eps}\right\|_\alpha^\alpha\le |\Omega|M_\eta^\frac\alpha p.
		\end{equation}
		Theorem~\ref{lemma:rigidity2} now ensures the existence of $\widetilde{R}_{\eps,\eta}\in SO(n)$ and of $g_1^{\eps,\eta}, g_2^{\eps,\eta}$ such that
		\begin{equation}\label{eq:serve2}
			\nabla y_\eps=\widetilde{R}_{\eps,\eta}+g_1^{\eps,\eta}+g_2^{\eps,\eta}\qquad\text{ and }\qquad \|g_1^{\eps,\eta}\|_p\le C\|f_1^{\eps,\eta}\|_p,\ \|g_2^{\eps,\eta}\|_\alpha\le C\|f_2^{\eps,\eta}\|_\alpha.
		\end{equation} 
		Since $\nabla y_\eps=R_\eps+\eps R_\eps\nabla u_\eps$, we deduce that
		\begin{equation}\label{eq:equality}
			\frac{\widetilde{R}_{\eps,\eta}-R_\eps}{\eps}=R_\eps\nabla u_\eps-\frac{g_1^{\eps,\eta}}{\eps}-\frac{g_2^{\eps,\eta}}{\eps},
		\end{equation}
		hence
		\begin{equation}\label{eq:estimateeta}
			\frac{|\widetilde{R}_{\eps,\eta}-R_\eps|^p}{\eps^p}\le C\left(\|\nabla u_\eps\|_p^p+\left\|\frac{g_1^{\eps,\eta}}{\eps}\right\|_p^p+\left\|\frac{g_2^{\eps,\eta}}{\eps}\right\|_p^p\right)\le C(1+\eta+M_\eta),
		\end{equation}
		where the last inequality follows from H\"older's inequality, \eqref{eq:serve}, and \eqref{eq:serve2}.
		
		On the other hand, by \eqref{eq:equality} we can write
		\begin{equation*}
			\nabla u_\eps=R_\eps^T\left(	\frac{\widetilde{R}_{\eps,\eta}-R_\eps}{\eps}+\frac{g_1^{\eps,\eta}}{\eps}+\frac{g_2^{\eps,\eta}}{\eps}\right).
		\end{equation*}
		Thus, by \eqref{eq:serve}, \eqref{eq:serve2}, and \eqref{eq:estimateeta} we have that
		for every measurable set $A\subset\Omega$ 
		\begin{align*}
			\int_A|\nabla u_\eps|^p\d x&\le C\left(\frac{|\widetilde{R}_{\eps,\eta}-R_\eps|^p}{\eps^p}|A|+\int_{A}\left|\frac{g_1^{\eps,\eta}}{\eps}\right|^p\d x +\int_{A}\left|\frac{g_2^{\eps,\eta}}{\eps}\right|^p\d x\right)\\
			&\le C\left((1+\eta+M_\eta)|A|+\eta+\left\|\frac{f_2^{\eps,\eta}}{\eps}\right\|_\alpha^p|A|^{1-\frac p\alpha}\right)\\
			&\le C \left((1+\eta+M_\eta)|A|+\eta+M_\eta|A|^{1-\frac p\alpha}\right).
		\end{align*}
		Now, for every $\delta>0$ we can choose first $\eta=\eta(\delta)$ and then $\omega=\omega(\delta,\eta)$ in such a way that the right-hand side above is less than $\delta$
		for every measurable set $A\subset\Omega$ with $|A|<\omega$. This proves claim (c) and concludes the proof of the theorem.
	\end{proof}

	\section{Pressure loads of arbitrary sign}\label{sec:general}
	
	Here we extend the results of the previous section to pressure loads whose intensity $\pi$ is not necessarily nonnegative (and still satisfies \ref{hyp:pi1}). To deal with the negative part of $\pi$ we need to assume an additional bound from below for $W(\cdot,F)$ in terms of $\det F$:\smallskip
	\begin{enumerate}[label=\textup{(W6)}]
		\item\label{hyp:W6} $W(x,F)\ge c_2 g_q(|\det F-1|)$ for a.e.\ $x\in\Omega$ and for every $F\in \R^{n\times n}$, for some $q\in [1,2]$,\smallskip
	\end{enumerate}
	where $g_q$ is defined as in \eqref{eq:gp} and $c_2>0$ is a constant independent of $x$.
	According to the value of $q$ in \ref{hyp:W6}, we assume $\pi$ to satisfy the following condition:\smallskip
	\begin{enumerate}[label=\textup{($\pi$3)}]
		\item\label{hyp:pi3} if $q=1$, $\pi^-$ is bounded;
		if $q\in(1,2]$, $\pi^-(y)\le C(1+|y|^\frac{p}{q'})$ for every $y\in \R^n$.\smallskip
	\end{enumerate}
	We note that the growth condition in \ref{hyp:pi3} is at most linear, since $p,q\in (1,2]$ implies $p/q'\in(0,1]$.
	
	In the current framework the energy $\EE_\eps$ is defined as in \eqref{eq:energy1} with the set of admissible deformations $Y^p$ replaced by
	\begin{equation}\label{eq:Xpq}
		Y^p_q:=\left\{y\in Y^p: \ \det\nabla y\in L^q(\Omega) \right\}.
	\end{equation}
	Owing to \ref{hyp:pi3} the energy is well defined on $Y^p_q$: indeed, if $y\in Y^p_q$, then the composition $\pi^-\circ y$ belongs to $L^{q'}(\Omega)$ and thus, $\pi(y)\det\nabla y$ is integrable. Clearly, all rigid motions $y_R$ with $R\in SO(n)$ are still admissible deformations. 
	As observed in Remark~\ref{rmk:injective}, also in this setting the a.e.\ injectivity condition can be included in the definition of $Y^p_q$ without altering the results of this section.
	
	As in the previous section we need a Lipschitz continuous function that extends $\pi$ outside a neighborhood of $\overline{\mc O}$, is below $\pi$ everywhere, and satisfies the same growth condition \ref{hyp:pi3} as $\pi$.

	\begin{lemma}\label{lemma:hatpi2}
		Assume conditions \ref{hyp:pi1} and \ref{hyp:pi3}. Then, there exists a Lipschitz continuous function $\hat\pi\colon \R^n\to \R$ such that $\hat\pi$ coincides with $\pi$ in a neighborhood of $\overline{\mc O}$, $\hat\pi(y)\le\pi(y)$ for all $y\in \R^n$, and $\hat\pi$ has the following property: if $q=1$, $\hat\pi$ is bounded; if $q\in(1,2]$, $|\hat\pi(y)|\le C(1+|y|^\frac{p}{q'})$ for every $y\in \R^n$.
	\end{lemma}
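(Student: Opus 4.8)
The plan is to repeat the construction in the proof of Lemma~\ref{lemma:hatpi1}, replacing the constant $0$ and the positive-part truncation by a \emph{Lipschitz minorant} of $\pi$ that already obeys the prescribed growth. Set $\alpha:=p/q'\in[0,1]$ (with the convention $\alpha=0$ when $q=1$), and let $C_*>0$ be a constant with $\pi^-(y)\le C_*(1+|y|^\alpha)$ for every $y\in\R^n$; such a $C_*$ exists by \ref{hyp:pi3}, and in particular $\pi(y)\ge -C_*(1+|y|^\alpha)$ for all $y$. Assume first $0\notin\Omega$, so that $\mc O$ is an open annulus, and fix $0<r_1<r_2$, $0<\delta<r_1$ and the Lipschitz constant $L$ of $\pi$ on $\overline{B_{r_2+\delta}}\setminus B_{r_1-\delta}$ exactly as in the proof of Lemma~\ref{lemma:hatpi1}; fix also $\rho_0\in(0,r_1-\delta)$ and define
\[
\psi(y):=-C_*\bigl(1+(\rho_0\vee|y|)^\alpha\bigr)\qquad\text{for }y\in\R^n .
\]
Since $t\mapsto t^\alpha$ is Lipschitz on $[\rho_0,+\infty)$, the function $\psi$ is globally Lipschitz; since $\rho_0\vee|y|\ge|y|$, we get $\psi(y)\le -C_*(1+|y|^\alpha)\le\pi(y)$ for every $y$; and clearly $|\psi(y)|\le C(1+|y|^\alpha)$, i.e.\ $\psi$ is bounded if $q=1$ and has the growth of \ref{hyp:pi3} if $q\in(1,2]$.

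Next I would introduce $\widetilde\pi$ on the closed annulus $\overline{B_{r_2+\delta}}\setminus B_{r_1-\delta}$ through the very same radial formula used in the proof of Lemma~\ref{lemma:hatpi1}, but with a steeper penalty slope, namely
\[
K:=L\vee\frac{M+\Psi}{\delta},\qquad M:=\max_{\partial B_{r_1}\cup\partial B_{r_2}}\pi,\qquad \Psi:=\max\{|\psi(y)|:\ y\in\partial B_{r_1-\delta}\cup\partial B_{r_2+\delta}\}.
\]
As in the cited proof, $\widetilde\pi$ is Lipschitz on its domain, it coincides with $\pi$ on the open neighborhood $\{r_1<|y|<r_2\}\supset\overline{\mc O}$ of $\overline{\mc O}$, and the bound $\widetilde\pi\le\pi$ holds on the whole annulus: the only non-trivial check is in the two buffer layers, and there the computation $\widetilde\pi(y)\le\pi(y)+L(r_1-|y|)-K(r_1-|y|)\le\pi(y)$ from Lemma~\ref{lemma:hatpi1} applies verbatim because $K\ge L$. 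Moreover, on the two spheres $\partial B_{r_1-\delta}$ and $\partial B_{r_2+\delta}$ one has $\widetilde\pi\le M-K\delta\le-\Psi\le\psi$, by the choice of $K$.

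Finally I would glue the two objects, setting $\hat\pi:=\widetilde\pi\vee\psi$ on $\overline{B_{r_2+\delta}}\setminus B_{r_1-\delta}$ and $\hat\pi:=\psi$ on the (closed) complementary set $\overline{B_{r_1-\delta}}\cup(\R^n\setminus B_{r_2+\delta})$. The two definitions agree on the interface spheres, where $\widetilde\pi\le\psi$ forces $\widetilde\pi\vee\psi=\psi$; hence $\hat\pi$ is continuous, and being Lipschitz on each of these two closed sets whose union is $\R^n$, it is globally Lipschitz. By construction $\hat\pi\le\pi$ everywhere (since $\widetilde\pi\le\pi$ and $\psi\le\pi$), $\hat\pi=\pi$ on $\{r_1<|y|<r_2\}$ (there $\widetilde\pi=\pi\ge\psi$), and $\hat\pi$ is bounded on the compact annulus while $\hat\pi=\psi$ outside it, which yields the stated global bounds (bounded if $q=1$, of growth $C(1+|y|^{p/q'})$ if $q\in(1,2]$). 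The case $0\in\Omega$ is analogous: one performs only the outer radial extension, as in Lemma~\ref{lemma:hatpi1}, and takes $\psi(y):=-C_*(1+|y|^\alpha)$, which is Lipschitz on $\{|y|\ge r_2+\delta\}$; no modification near the origin is needed since there $\hat\pi$ coincides with $\pi$.

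The single genuinely new point compared with Lemma~\ref{lemma:hatpi1} — and the one I expect to require some care — is that the natural floor $-\pi^-$ is neither Lipschitz nor bounded when $q\in(1,2]$: one must therefore exhibit a Lipschitz minorant with the same sublinear growth (the function $\psi$, the cap $\rho_0\vee|y|$ repairing the non-Lipschitzianity of $|y|^\alpha$ at the origin) and then enlarge the radial penalty slope $K$ enough to pull $\widetilde\pi$ all the way down to $\psi$ across the buffer layers. Everything else is a routine transcription of the proof of Lemma~\ref{lemma:hatpi1}.
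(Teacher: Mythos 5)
Your proof is correct, but it takes a genuinely different route from the paper. Your strategy is to redo the radial-buffer construction of Lemma~\ref{lemma:hatpi1} from scratch, replacing the constant floor $0$ (and the truncation $\vee\,0$) with a global Lipschitz minorant $\psi(y)=-C_*(1+(\rho_0\vee|y|)^\alpha)$ of $\pi$ having the prescribed growth, and enlarging the slope $K$ so that $\widetilde\pi$ falls below $\psi$ on the interface spheres; the gluing $\widetilde\pi\vee\psi$ inside the buffered annulus, $\psi$ outside, then works, and your verification of the Lipschitz, minorant, local identity, and growth properties is sound (in particular, $M+\Psi$ may be nonpositive, but then $K=L$ still gives $K\delta\ge M+\Psi$, so the interface estimate holds anyway). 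The paper instead avoids repeating the construction: it introduces a single global Lipschitz function $h(y)=\bigl(\overline{C}(1+|y|^{p/q'})\bigr)\vee(1+\overline{C})$ with $\pi\ge -h$, applies Lemma~\ref{lemma:hatpi1} as a black box to the \emph{nonnegative} function $\Pi:=\pi+h$, and then sets $\hat\pi:=\widehat\Pi-h$; all four required properties follow in one line from those of $\widehat\Pi$ and the boundedness of $\widehat\Pi$. The reduction is shorter and requires no re-examination of the buffer-layer estimates, whereas your direct construction is self-contained and makes the geometric picture (a radial penalty pulling $\pi$ down to a sublinear floor) more explicit. Both are valid; the tradeoff is brevity and reuse versus transparency.
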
 
	
	\begin{proof}
		We consider only the case $q\in(1,2]$, being the case $q=1$ analogous and even simpler. 
		Let $\overline{C}>0$ be a constant for which \ref{hyp:pi3} is satisfied and let 
		\begin{equation*}
			h(y):=\big(\overline{C}(1+|y|^\frac{p}{q'})\big)\vee(1+\overline{C}).
		\end{equation*}
		Since $p/q'\in (0,1]$, the function $h$ is Lipschitz continuous in the whole of $\R^n$ and $\pi\ge -h$ by \ref{hyp:pi3}. Hence we can apply 
		Lemma~\ref{lemma:hatpi1} to the function $\Pi:=\pi+h$. This provides us with a function $\widehat\Pi$. It is now easy to check that the function
		$\hat{\pi}:=\widehat\Pi-h$ has all the required properties.	
	\end{proof}
	
	Under this new set of assumptions, the results of Section~\ref{sec:nonnegative} can be modified as follows.
	
	\begin{lemma}\label{lemma:est2}
		Assume \ref{hyp:W1}--\ref{hyp:W6}, \ref{hyp:pi1}, and \ref{hyp:pi3}. Then there exists $\eps_0\in (0,1)$ such that, if $\EE_\eps(y_\eps)\le C\eps^2$ for every $\eps\in(0,\eps_0)$, then there holds
		\begin{equation}\label{eq:estdet2}
			\int_{\Omega}g_q(|\det\nabla y_\eps(x)-1|)\d x\le C\eps^2 \quad \text{for every }\eps\in(0,\eps_0). 
		\end{equation}
		Furthermore, there exist constant rotations $R_\eps\in SO(n)$ such that the rescaled displacements $u_\eps$ defined by \eqref{eq:displacement} satisfy
		\begin{equation}\label{eq:estgp2}
			\int_{\Omega}g_p(\eps|\nabla u_\eps(x)|)\d x\le C\eps^2 \quad \text{for every }\eps\in(0,\eps_0)
		\end{equation}
		and are uniformly bounded in $W^{1,p}(\Omega;\R^n)$.
		
		If, moreover, $(R'_\eps)\subset SO(n)$ is another sequence for which the rescaled displacements, defined as in \eqref{eq:displacement}, satisfy \eqref{eq:estgp2}, then
		\begin{equation*}
			|R_\eps-R'_\eps|\le C\eps
		\end{equation*} 
		for every $\eps\in (0,\eps_0)$.
		Finally, one has 
		\begin{equation*}
			-C\eps^2\le\inf\limits_{\mathring{W}^{1,p}(\Omega;\R^n)}\EE_\eps\le 0 \quad \text{for every }\eps\in(0,\eps_0).
		\end{equation*}
	\end{lemma}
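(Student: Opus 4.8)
The plan is to retrace the proof of Lemma~\ref{lemma:est1}, replacing at each point where the nonnegativity or the boundedness of $\hat\pi$ was used the growth condition \ref{hyp:pi3} together with the new coercivity bound \ref{hyp:W6}; as a by-product a smallness requirement on $\eps$ will appear, which is why the statement only asserts the estimates for $\eps\in(0,\eps_0)$. Throughout, $\hat\pi$ is the function supplied by Lemma~\ref{lemma:hatpi2}, which coincides with $\pi$ on $\mc O$, satisfies $\widehat{\mc E}_\eps\le\mc E_\eps$ on $Y^p_q$, and obeys the same growth bound as $\pi$; I keep the notation $\Omega^\pm_\eps$ of \eqref{eq:omegapiumeno} and set $E:=\int_\Omega W(x,\nabla y_\eps)\,\d x$. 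Exactly as in Lemma~\ref{lemma:est1}, from $\widehat{\mc E}_\eps(y_\eps)\le\mc E_\eps(y_\eps)\le C\eps^2$ and \eqref{eq:IinR} one obtains, via \ref{hyp:W5} and Theorem~\ref{lemma:prigidity}, a rotation $R_\eps\in SO(n)$ with $\int_\Omega g_p(|\nabla y_\eps-R_\eps|)\,\d x\le CE$ and
\[
E\le C\eps^2+\eps\int_\Omega\big(\hat\pi(y_{R_\eps})-\hat\pi(y_\eps)\det\nabla y_\eps\big)\,\d x .
\]
Splitting $\hat\pi(y_{R_\eps})-\hat\pi(y_\eps)\det\nabla y_\eps=(\hat\pi(y_{R_\eps})-\hat\pi(y_\eps))+\hat\pi(y_\eps)(1-\det\nabla y_\eps)$, the first summand contributes at most $C\eps^2\|u_\eps\|_1$ by the Lipschitz continuity of $\hat\pi$, and the second is split over $\Omega^-_\eps$ and $\Omega^+_\eps$.

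The crux is the term over $\Omega^+_\eps$, on which $\det\nabla y_\eps>2$, so that $\hat\pi(y_\eps)(1-\det\nabla y_\eps)$ has no sign: writing $\hat\pi=\hat\pi^+-\hat\pi^-$, the $\hat\pi^+$ part is nonpositive and may be discarded, while the $\hat\pi^-$ part must be controlled. Here I would use \ref{hyp:W6} to bound $\|\det\nabla y_\eps-1\|^q_{L^q(\Omega^+_\eps)}\le CE$ (and, as in \eqref{eq:coerc2}, $\|\det\nabla y_\eps-1\|^2_{L^2(\Omega^-_\eps)}\le CE$), and then H\"older's inequality with exponents $q,q'$ combined with \ref{hyp:pi3}: the key algebraic point is that $\hat\pi^-(y_\eps)\le C(1+|y_\eps|^{p/q'})$ and $|y_\eps|\le C+\eps|u_\eps|$ give $\|\hat\pi^-(y_\eps)\|_{q'}\le C(1+\eps^{p/q'}\|u_\eps\|_p^{p/q'})$, the exponent $p/q'$ being precisely the one that renders the $q'$-th power integrable against $\|u_\eps\|_p^p$ (and similarly $\|\hat\pi(y_\eps)\|_{L^2(\Omega^-_\eps)}\le C(1+\eps^{p/q'}\|u_\eps\|_p^{p/q'})$, using $q'\ge2$). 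Writing $N:=\|u_\eps\|_{W^{1,p}}$ and absorbing, by repeated use of Young's inequality, all factors that are superlinear in $E$, one reaches a feedback inequality of the form
\[
E\le C\eps^2+C\eps^2 N+C\eps^2\big(\eps^{p}+\eps^{q'+p-2}\big)N^p
\]
(the case $q=1$ is analogous and simpler, with $q'=\infty$ and $\hat\pi^-$ bounded, giving $E\le C\eps^2+C\eps^2N+C\eps E$). Since $\int_\Omega g_p(|\eps\nabla u_\eps|)\,\d x\le CE$, estimating separately on $\{|\eps\nabla u_\eps|\le1\}$ (by H\"older, recalling $g_p(t)\ge\tfrac12 t^2$ there) and on its complement, and then invoking Poincar\'e--Wirtinger exactly as in Lemma~\ref{lemma:est1}, this becomes
\[
N^p\le C\big(1+N+N^{p/2}\big)+C\,\sigma(\eps)\,N^p ,\qquad \sigma(\eps)\to0\ \text{ as }\eps\to0.
\]

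All exponents of $N$ on the right-hand side other than the last are strictly below $p$ — here $1<p\le2$, $1\le q\le2$, hence $p/q'\le1$ and $q'\ge2$, are used repeatedly in the bookkeeping — so, after fixing $\eta$ small in Young's inequality, there is a threshold $\eps_0\in(0,1)$ such that for $\eps<\eps_0$ the term $\sigma(\eps)N^p$ is absorbed into the left-hand side, yielding $\|u_\eps\|_{W^{1,p}}\le C$; this absorption step is exactly what forces $\eps$ to be small, unlike in Lemma~\ref{lemma:est1}. Inserting this bound back gives $E\le C\eps^2$, whence \eqref{eq:estdet2} follows from \ref{hyp:W6} and \eqref{eq:estgp2} from $\int_\Omega g_p(|\eps\nabla u_\eps|)\,\d x\le CE$; the estimate $|R_\eps-R'_\eps|\le C\eps$ is derived from \eqref{eq:estgp2} for $R_\eps$ and $R'_\eps$ exactly as in Lemma~\ref{lemma:est1}, via $\|\nabla y_\eps-R_\eps\|_p,\ \|\nabla y_\eps-R'_\eps\|_p\le C\eps$. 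Finally, for the infimum bound I would argue as in Corollary~\ref{cor:inf}: the upper bound is immediate since $\mc E_\eps$ vanishes on the identity, so any minimizing sequence satisfies $\mc E_\eps(y_\eps)\le\eps^2$ and the estimates just proved apply; then $\mc E_\eps(y_\eps)\ge\eps\int_\Omega(\hat\pi(y_\eps)\det\nabla y_\eps-\hat\pi(y_{R_\eps}))\,\d x\ge -C\eps^2$, using the now available bounds $\|u_\eps\|_1\le C$, $\|\det\nabla y_\eps-1\|_{L^2(\Omega^-_\eps)}\le C\eps$, $\|\det\nabla y_\eps-1\|_{L^q(\Omega^+_\eps)}\le C\eps^{2/q}$, and the $L^2$, respectively $L^{q'}$, bounds on $\hat\pi(y_\eps)$, respectively $\hat\pi^-(y_\eps)$. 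The main obstacle is precisely the loss of sign on $\Omega^+_\eps$: one does not get the clean bound $E\le C\eps^2(1+\|u_\eps\|_1)$ of the dead-load or nonnegative-pressure cases, but only one polluted by a term $C\sigma(\eps)\|u_\eps\|_{W^{1,p}}^p$, absorbable only for small $\eps$, and verifying that the powers of $N$ produced along the way are genuinely subcritical (via $p\le2$ and $q\le2$) is the delicate point.
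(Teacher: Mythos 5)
Your proposal is correct and follows the same overall structure as the paper's proof: use Lemma~\ref{lemma:hatpi2} and the rigidity estimate, split the pressure term over $\Omega^\pm_\eps$, invoke \ref{hyp:W6} and the $p/q'$-growth of $\hat\pi$ to control the $\Omega^+_\eps$ contribution, and absorb for small $\eps$ to close a feedback inequality for $\|u_\eps\|_{W^{1,p}}$. The only substantive deviation is cosmetic: you apply H\"older first and then Young on $\eps\,E^{1/q}$ and $\eps\,E^{1/2}$, whereas the paper applies Young pointwise to $|y_\eps-y_{R_\eps}|^{p/q'}\,|\det\nabla y_\eps-1|$ and then uses \ref{hyp:W6} to absorb the resulting $(\tfrac{c_2}{2}+C\eps)\int_\Omega g_q(|\det\nabla y_\eps-1|)\,\d x$ directly into $\int_\Omega W$; the two routes give the same structure of the estimate, and your decomposition $\hat\pi=\hat\pi^+-\hat\pi^-$ on $\Omega^+_\eps$ is unnecessary but harmless, since the paper simply keeps $|\hat\pi|$. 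A minor inaccuracy: the exponent $\eps^p$ in your intermediate inequality should read $\eps^{2p/q'}$ (coming from the $\Omega^-_\eps$ contribution after squaring $\eps^{p/q'}N^{p/q'}$ and using $N^{2p/q'}\le 1+N^p$), which is weaker when $q'>2$; this does not affect the conclusion since all that matters is that the coefficient of $N^p$ is $o(1)$. Likewise, after dividing by $\eps^p$, an extra term $\sigma(\eps)^{p/2}N^{p^2/2}$ appears; since $p^2/2<p$ for $p<2$ (and $p^2/2=p$ with vanishing coefficient when $p=2$), it is absorbed the same way, so your summary of the feedback inequality is essentially right.
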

	
	\begin{proof}
		The choice of $\eps_0$ will be made throughout the proof. We follow the lines of the proof of Lemma~\ref{lemma:est1}. 
		By Lemma~\ref{lemma:hatpi2} inequality \eqref{eq:Ehat} still holds. By \ref{hyp:W5} and Theorem~\ref{lemma:prigidity} there exists a sequence $(R_\eps)\subset SO(n)$ such that
		\begin{equation*}
			\int_\Omega g_p(|\nabla y_\eps(x)-R_\eps|)\d x\le C\int_\Omega W(x,\nabla y_\eps(x))\d x.
		\end{equation*}
		and
		\begin{equation}\label{eq:West2}
			\int_\Omega W(x,\nabla y_\eps)\d x\le C\eps^2+C\eps\|y_\eps-y_{R_\eps}\|_1+\eps\int_\Omega |\hat\pi (y_\eps)| |\det\nabla y_\eps-1|\d x.
		\end{equation}
		We denote by $P_\eps$ the last term in the above inequality. In the following, $c_2$ is the constant in condition \ref{hyp:W6}.
		If $q=1$, the function $\hat\pi$ is bounded and thus, recalling the definition \eqref{eq:omegapiumeno} of the sets $\Omega_\eps^\pm$, 
		we have
		\begin{align*}
			P_\eps & \le C\eps\|\det\nabla y_\eps-1\|_{L^2(\Omega_\eps^-)}+C\eps\|\det\nabla y_\eps-1\|_{L^1(\Omega_\eps^+)} \\
			&\le C\eps^2 +\frac{c_2}4 \|\det\nabla y_\eps-1\|^2_{L^2(\Omega_\eps^-)}+ C\eps\|\det\nabla y_\eps-1\|_{L^1(\Omega_\eps^+)} \\
			&\le C\eps^2  + \Big(\frac{c_2}2+C\eps\Big) \int_{\Omega}g_1(|\det\nabla y_\eps-1|)\d x,
		\end{align*}
		where we used Cauchy's inequality and \eqref{eq:propgp}.
		If instead $q\in(1,2]$, the function $\hat\pi$ is Lipschitz continuous and satisfies a $p/q'$-growth condition with $p/q'\leq1$, so that we obtain
		\begin{equation*}
			P_\eps \le C\eps \int_{\Omega_\eps^-}(1+|y_\eps-y_{R_\eps}|)|\det\nabla y_\eps-1|\d x+ C\eps\int_{\Omega_\eps^+}(1+|y_\eps-y_{R_\eps}|^\frac{p}{q'})|\det\nabla y_\eps-1|\d x.
			\end{equation*}
		Using that $|\det\nabla y_\eps-1|\leq 1$ on $\Omega_\eps^-$ and applying H\"older's inequality, from the previous equation we deduce
		\begin{align*}
			P_\eps &\le C\eps\|\det\nabla y_\eps-1\|_{L^2(\Omega_\eps^-)}+C\eps\|y_\eps-y_{R_\eps}\|_{W^{1,p}} +C\eps\|\det\nabla y_\eps-1\|_{L^q(\Omega_\eps^+)}\\
			&\qquad +C\eps\int_{\Omega_\eps^+}|y_\eps-y_{R_\eps}|^p\d x+C\eps\int_{\Omega_\eps^+}|\det\nabla y_\eps-1|^q\d x \\
			&\le C\eps^2 + \frac{c_2}4 \|\det\nabla y_\eps-1\|^2_{L^2(\Omega_\eps^-)} +C\eps^{q'} + \Big(\frac{c_2}4+C\eps\Big) \|\det\nabla y_\eps-1\|^q_{L^q(\Omega_\eps^+)} \\
			&\qquad +C\eps \|y_\eps-y_{R_\eps}\|_{W^{1,p}}+C\eps\|y_\eps-y_{R_\eps}\|_{W^{1,p}}^p \\
			&\le C\eps^2 + \Big(\frac{c_2}2+C\eps\Big) \int_{\Omega}g_q(|\det\nabla y_\eps-1|)\d x 
			+C\eps \|y_\eps-y_{R_\eps}\|_{W^{1,p}}+C\eps\|y_\eps-y_{R_\eps}\|_{W^{1,p}}^p,
		\end{align*}
		where we used Young's inequality, \eqref{eq:propgp}, and the fact that $q'\geq2$.
		Combining \eqref{eq:West2} with the previous bounds on $P_\eps$, we obtain that 
		\begin{equation}\label{eq:almostfinalest}
			\begin{aligned}
				\int_\Omega W(x,\nabla y_\eps)\d x & \le  C\eps^2+C\eps\|y_\eps-y_{R_\eps}\|_{W^{1,p}}+C\eps\|y_\eps-y_{R_\eps}\|_{W^{1,p}}^p  \\
				& \qquad + \Big(\frac{c_2}2+C\eps\Big) \int_{\Omega}g_q(|\det\nabla y_\eps-1|)\d x 
			\end{aligned}
		\end{equation}
		in both cases $q=1$ and $q\in (1,2]$. 
		
		Now, if $\eps_0\le c_2/(4\overline{C})$, where $\overline{C}$ is a constant for which \eqref{eq:almostfinalest} is true, then by \ref{hyp:W6} we deduce that
		\begin{equation}\label{claim}
			\int_{\Omega}g_q(|\det\nabla y_\eps-1|)\d x\le C\eps^2+C\eps\|y_\eps-y_{R_\eps}\|_{W^{1,p}}+C\eps\|y_\eps-y_{R_\eps}\|_{W^{1,p}}^p
		\end{equation}
		for every $\eps\in(0,\eps_0)$.
		Combining \eqref{claim} and \eqref{eq:almostfinalest} yields
		\begin{equation}\label{eq:estW2}
			\begin{aligned}
				\int_\Omega W(x,\nabla y_\eps)\d x&\le C\eps^2+C\eps\|y_\eps-y_{R_\eps}\|_{W^{1,p}}+C\eps\|y_\eps-y_{R_\eps}\|_{W^{1,p}}^p\\
				&= C\eps^2(1+\|u_\eps\|_{W^{1,p}}+\eps^{p-1}\|u_\eps\|_{W^{1,p}}^p).
			\end{aligned}
		\end{equation}
		Arguing as in \eqref{eq:L2bound}--\eqref{eq:Lpbound2} and using Poincar\'e-Wirtinger inequality we deduce that
		\begin{equation*}
			\| u_\eps\|_{W^{1,p}}^p\le C(1+\|u_\eps\|_{W^{1,p}}+\eps^{p-1}\|u_\eps\|_{W^{1,p}}^p).
		\end{equation*}
		Up to choosing $\eps_0$ smaller, if needed, we obtain
		\begin{equation*}
			\| u_\eps\|_{W^{1,p}}^p\le C(1+\|u_\eps\|_{W^{1,p}}),
		\end{equation*}
		which implies $\|u_\eps\|_{W^{1,p}}\le C$.
		
		Inequality \eqref{eq:estdet2} now follows easily from \eqref{claim}, while \eqref{eq:estgp2} is a consequence of \eqref{eq:estW2} and \ref{hyp:W5}. The last two statements of the lemma can be proved arguing exactly as in the proof of Lemma~\ref{lemma:est1} and Corollary~\ref{cor:inf}.
	\end{proof}
	
	The proof of the following compactness result is completely analogous to that of Proposition~\ref{prop:compactness}.
	
	\begin{prop}[\textbf{Compactness}]\label{prop:compactness2}
		Assume \ref{hyp:W1}--\ref{hyp:W6}, \ref{hyp:pi1}, and \ref{hyp:pi3}. If $\EE_\eps(y_\eps)\le C\eps^2$ for $\eps\in (0,\eps_0)$, then for any $R_\eps$, $u_\eps$ given by Lemma~\ref{lemma:est2}, we have that, up to subsequences,
		\begin{itemize}
			\item $u_\eps\wto u_0$ weakly in $\mathring{W}^{1,p}(\Omega;\R^n)$ with $u_0\in \mathring{H}^{1}(\Omega;\R^n)$,\smallskip
			\item $R_\eps\to R_0$ with $R_0\in \RR$,
		\end{itemize}
		as $\eps\to 0$. Moreover, $R_0$ is independent of the choice of $R_\eps$
		and $u_0$ is independent up to infinitesimal rigid motions of the form $Ax$, with $A\in \R^{n\times n}_{\rm skew}$.
	\end{prop}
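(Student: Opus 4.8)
The plan is to follow, essentially line by line, the proof of Proposition~\ref{prop:compactness}, using Lemma~\ref{lemma:est2} in place of Lemma~\ref{lemma:est1} and the auxiliary function $\hat\pi$ provided by Lemma~\ref{lemma:hatpi2} in place of the one from Lemma~\ref{lemma:hatpi1}. First I would apply Lemma~\ref{lemma:est2}: it furnishes rotations $R_\eps\in SO(n)$ and displacements $u_\eps\in\mathring{W}^{1,p}(\Omega;\R^n)$ given by \eqref{eq:displacement} that are uniformly bounded in $W^{1,p}(\Omega;\R^n)$ and satisfy \eqref{eq:estgp2} and \eqref{eq:estdet2}. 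Extracting a subsequence, $u_\eps\wto u_0$ weakly in $\mathring{W}^{1,p}(\Omega;\R^n)$, and by compactness of $SO(n)$ also $R_\eps\to R_0\in SO(n)$ along a further subsequence. That $u_0\in\mathring{H}^{1}(\Omega;\R^n)$ is proved verbatim as in Proposition~\ref{prop:compactness}: introducing $G_\eps:=\{x\in\Omega:\eps^{1/2}|\nabla u_\eps(x)|\le1\}$, estimate \eqref{eq:estgp2} (in the role of \eqref{eq:estgp}) shows that $\chi_{G_\eps}\nabla u_\eps$ is bounded in $L^2(\Omega;\R^{n\times n})$, Tchebichev's inequality gives $|\Omega\setminus G_\eps|\le C\eps^{p/2}$ so that $\chi_{G_\eps}\to1$ boundedly in measure, and matching the weak $L^2$ limit against $\nabla u_\eps\wto\nabla u_0$ in $L^p(\Omega;\R^{n\times n})$ identifies the two; Sobolev embedding then yields $u_0\in\mathring{H}^1(\Omega;\R^n)$.

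The only step requiring a genuine modification is the identification $R_0\in\RR$, since the function $\hat\pi$ of Lemma~\ref{lemma:hatpi2} is no longer nonnegative and the sign argument used in Proposition~\ref{prop:compactness} is unavailable. Denoting by $\widehat{\EE}_\eps$ the auxiliary energy built from $\hat\pi$, the inequality $\widehat{\EE}_\eps\le\EE_\eps$ (which still holds by Lemma~\ref{lemma:hatpi2}), the nonnegativity of $W$, and the fact that $\hat\pi\equiv\pi$ on $\mc O$ give
\begin{equation*}
	C\ge\frac1{\eps^2}\widehat{\EE}_\eps(y_\eps)\ge\frac1\eps\int_\Omega\big(\hat\pi(y_\eps)\det\nabla y_\eps-\hat\pi(x)\big)\d x\ge-C+\frac1\eps\int_\Omega\big(\pi(R_\eps x)-\pi(x)\big)\d x.
\end{equation*}
The last inequality is the point that differs from Proposition~\ref{prop:compactness}: writing pointwise $\hat\pi(y_\eps)\det\nabla y_\eps-\hat\pi(x)=(\hat\pi(y_\eps)\det\nabla y_\eps-\hat\pi(y_{R_\eps}))+(\pi(R_\eps x)-\pi(x))$ (using $\hat\pi\equiv\pi$ on $\mc O$, so that $\hat\pi(y_{R_\eps})=\pi(R_\eps x)$ and $\hat\pi(x)=\pi(x)$), one needs the first summand, after dividing by $\eps$ and integrating, to be bounded below by $-C$. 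In Proposition~\ref{prop:compactness} this rests on $\hat\pi\ge0$; here one splits instead $\hat\pi(y_\eps)\det\nabla y_\eps-\hat\pi(y_{R_\eps})=\hat\pi(y_\eps)(\det\nabla y_\eps-1)+(\hat\pi(y_\eps)-\hat\pi(y_{R_\eps}))$. The Lipschitz continuity of $\hat\pi$ controls the second term in $L^1(\Omega)$ by $C\|y_\eps-y_{R_\eps}\|_1=C\eps\|u_\eps\|_1\le C\eps$, while $\frac1\eps\int_\Omega\hat\pi(y_\eps)(\det\nabla y_\eps-1)\d x$ is bounded in absolute value by $\eps^{-2}P_\eps$, with $P_\eps$ the quantity introduced in the proof of Lemma~\ref{lemma:est2}, there shown to satisfy $P_\eps\le C\eps^2$ thanks to \eqref{eq:estdet2} and the $W^{1,p}$-bound on $u_\eps$. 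Hence the quantity $\frac1\eps\int_\Omega(\hat\pi(y_\eps)\det\nabla y_\eps-\hat\pi(y_{R_\eps}))\d x$ is bounded uniformly in $\eps$, which is precisely the $-C$ above; multiplying the chain by $\eps$ and letting $\eps\to0$ yields $\int_\Omega\big(\pi(R_0x)-\pi(x)\big)\d x\le0$, and since $I\in\RR$ by \eqref{eq:IinR} this forces $R_0\in\RR$.

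The remaining assertions are obtained exactly as in Proposition~\ref{prop:compactness}: uniqueness of $R_0$ follows from the bound $|R_\eps-R'_\eps|\le C\eps$ of Lemma~\ref{lemma:est2}, and uniqueness of $u_0$ up to an infinitesimal rigid motion $x\mapsto Ax$ with $A\in\R^{n\times n}_{\rm skew}$ follows by the argument of \cite[Theorem~5.1]{MaorMora}, using that the admissible displacements here have zero average. I do not expect a real obstacle in carrying this out: the only novelty relative to the nonnegative case, namely the loss of positivity of $\hat\pi$, has already been absorbed into Lemma~\ref{lemma:est2} via assumptions \ref{hyp:W6} and \ref{hyp:pi3}, so the compactness argument simply inherits the bounds \eqref{eq:estdet2}, \eqref{eq:estgp2}, and the $W^{1,p}$-bound that it needs.
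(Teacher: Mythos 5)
Your proposal is correct and follows essentially the same approach the paper intends: the paper states the proof is ``completely analogous'' to Proposition~\ref{prop:compactness}, and you supply the one nontrivial modification needed (replacing the sign argument that used $\hat\pi\ge 0$ by the bound $|\tfrac1\eps\int_\Omega\hat\pi(y_\eps)(\det\nabla y_\eps-1)\,\mathrm{d}x|\le\eps^{-2}P_\eps\le C$, which indeed follows from the estimates established in the proof of Lemma~\ref{lemma:est2} together with the $W^{1,p}$-bound on $u_\eps$). The remaining steps---the $\mathring{H}^1$ regularity of $u_0$ via $G_\eps$, the uniqueness of $R_0$ from \eqref{eq:estuniq}, and the uniqueness of $u_0$ modulo infinitesimal rigid motions---carry over verbatim, as you note.
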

	
	The next proposition is the analog of Proposition~\ref{prop:new}. However, in the present setting, owing to the assumptions \ref{hyp:W6} and \ref{hyp:pi3},
	we can improve the result and show convergence on the whole of $\Omega$.
	
	\begin{prop}\label{prop:new2}
		Let $\hat\pi$ be a function as in Lemma~\ref{lemma:hatpi2} and let $y_\eps\in \mathring{W}^{1,p}(\Omega;\R^n)$ satisfy \eqref{eq:estdet2}. Assume there exist $R_\eps\in SO(n)$ converging to $R_0\in\RR$ such that the corresponding displacements $u_\eps$, defined as in \eqref{eq:displacement}, weakly converge in $\mathring{W}^{1,p}(\Omega;\R^n)$ to $u_0\in\mathring{H}^{1}(\Omega;\R^n)$. Then,
		\begin{equation*}
			\lim_{\eps\to 0} \frac1\eps \int_\Omega \left(\hat\pi(y_\eps)\det\nabla y_\eps-\hat\pi(y_{R_\eps})\right)\d x 
			= \int_{\partial\Omega}\pi(R_0x)n_{\partial\Omega}(x)\cdot u_0(x)\d\mc H^{n-1}(x).
		\end{equation*}
	\end{prop}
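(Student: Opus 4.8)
The plan is to follow the proof of Proposition~\ref{prop:new}, using the sharper determinant bound \eqref{eq:estdet2} and the growth property of $\hat\pi$ from Lemma~\ref{lemma:hatpi2} to control the contribution of the \emph{bad set}, which in the nonnegative case was discarded by a sign argument. As in \eqref{eq:I} I would split
\begin{equation*}
	\frac1\eps \int_\Omega \left(\hat\pi(y_\eps)\det\nabla y_\eps-\hat\pi(y_{R_\eps})\right)\d x = I_\eps+II_\eps ,
\end{equation*}
with $I_\eps=\frac1\eps\int_\Omega\hat\pi(y_\eps)(\det\nabla y_\eps-1)\d x$ and $II_\eps=\frac1\eps\int_\Omega(\hat\pi(y_\eps)-\hat\pi(y_{R_\eps}))\d x$. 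The term $II_\eps$ is treated exactly as in Proposition~\ref{prop:new}: that argument uses only the global Lipschitz continuity of $\hat\pi$ (guaranteed by Lemma~\ref{lemma:hatpi2}), the pointwise bound $\frac1\eps|\hat\pi(y_\eps)-\hat\pi(y_{R_\eps})|\le C|u_\eps|$ combined with $u_\eps\to u_0$ in $L^1(\Omega;\R^n)$, and Rademacher's Theorem along the countable sequence $(R_\eps)$; it gives $\lim_{\eps\to0}II_\eps=\int_\Omega\nabla\pi(R_0x)\cdot R_0u_0(x)\d x$.

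For $I_\eps$ I would split over the good set $G_\eps$ of \eqref{eq:goodset}, writing $I_\eps=A_\eps+B_\eps$ with $A_\eps=\frac1\eps\int_{G_\eps}\hat\pi(y_\eps)(\det\nabla y_\eps-1)\d x$ and $B_\eps=\frac1\eps\int_{\Omega\setminus G_\eps}\hat\pi(y_\eps)(\det\nabla y_\eps-1)\d x$; since $(u_\eps)$ is bounded in $W^{1,p}(\Omega;\R^n)$, \eqref{eq:goodsetvanish} still holds and $G_\eps\subset\Omega_\eps^-$ for $\eps$ small. The term $A_\eps$ is handled as the terms $J^1_\eps+J^2_\eps$ in the proof of Proposition~\ref{prop:new}: expanding $\det\nabla y_\eps-1=\eps\div u_\eps+\sum_{k=2}^n\eps^k\iota_k(\nabla u_\eps)$ on $G_\eps$, the leading term converges to $\int_\Omega\pi(R_0x)\div u_0(x)\d x$ (using $\hat\pi\equiv\pi$ on $\mc O$, $R_\eps\to R_0$, $|\nabla u_\eps|\le\eps^{-1/2}$ on $G_\eps$, and $\chi_{G_\eps}\div u_\eps\wto\div u_0$ in $L^p(\Omega)$). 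For the higher-order terms, since $\hat\pi$ need no longer be bounded, I would use in place of the boundedness of $\hat\pi$ the decomposition $\hat\pi(y_\eps)=\hat\pi(R_\eps x)+(\hat\pi(y_\eps)-\hat\pi(R_\eps x))$, with $|\hat\pi(R_\eps x)|=|\pi(R_\eps x)|\le C$ on $\Omega$ and $|\hat\pi(y_\eps)-\hat\pi(R_\eps x)|\le C\eps|u_\eps|$; together with $|\nabla u_\eps|\le\eps^{-1/2}$ on $G_\eps$ this yields $\int_{G_\eps}|\hat\pi(y_\eps)||\nabla u_\eps|^k\le C\eps^{-(k-p)/2}$ for $k\ge 2$, hence $\big|A_\eps-\int_{G_\eps}\hat\pi(y_\eps)\div u_\eps\d x\big|\le C\eps^{p/2}\to0$ and $A_\eps\to\int_\Omega\pi(R_0x)\div u_0(x)\d x$.

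The genuinely new point, where the sign argument of Proposition~\ref{prop:new} is unavailable, is $B_\eps\to0$. Here I would split $\Omega\setminus G_\eps=(\Omega_\eps^-\setminus G_\eps)\cup\Omega_\eps^+$. From \eqref{eq:estdet2}, \eqref{eq:propgp}, and H\"older's inequality one obtains
\begin{equation*}
	\|\det\nabla y_\eps-1\|_{L^q(\Omega_\eps^-)}\le C\eps, \qquad \|\det\nabla y_\eps-1\|_{L^q(\Omega_\eps^+)}\le C\eps^{2/q}, \qquad |\Omega_\eps^+|\le C\eps^2 .
\end{equation*}
On the other hand, the growth bound $|\hat\pi(y)|\le C(1+|y|^{p/q'})$ gives $|\hat\pi(y_\eps)|^{q'}\le C(1+|u_\eps|^p)$, which is equi-integrable because $(u_\eps)$ is bounded in $W^{1,p}(\Omega;\R^n)$ and hence in $L^{p^*}(\Omega;\R^n)$ with $p^*>p$; since $|\Omega\setminus G_\eps|\to0$ by \eqref{eq:goodsetvanish}, this forces $\|\hat\pi(y_\eps)\|_{L^{q'}(\Omega\setminus G_\eps)}\to0$. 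Pairing the two families of bounds by H\"older's inequality and using $2/q\ge1$ gives
\begin{equation*}
	|B_\eps|\le\frac1\eps\,\|\hat\pi(y_\eps)\|_{L^{q'}(\Omega\setminus G_\eps)}\Big(\|\det\nabla y_\eps-1\|_{L^q(\Omega_\eps^-)}+\|\det\nabla y_\eps-1\|_{L^q(\Omega_\eps^+)}\Big)\longrightarrow0 ,
\end{equation*}
with the obvious simplifications when $q=1$, where $\hat\pi$ is bounded and $\det\nabla y_\eps-1$ is controlled directly in $L^1$ by \eqref{eq:estdet2}. Finally, combining $I_\eps=A_\eps+B_\eps\to\int_\Omega\pi(R_0x)\div u_0(x)\d x$ with the limit of $II_\eps$ and applying the Divergence Theorem to $\pi(R_0\,\cdot\,)u_0\in W^{1,1}(\Omega;\R^n)$ — legitimate since $\pi(R_0\,\cdot\,)$ is Lipschitz on $\overline\Omega$, $u_0\in H^1(\Omega;\R^n)$, and $\partial\Omega$ is Lipschitz — produces the claimed boundary integral. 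The main obstacle is precisely the estimate of $B_\eps$: it is there that the extra coercivity \ref{hyp:W6} (through \eqref{eq:estdet2}) and the growth assumption \ref{hyp:pi3} enter and must be balanced through the conjugate exponents $q$ and $q'$.
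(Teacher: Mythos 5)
Your proposal is correct and, for the terms $II_\eps$ and $A_\eps$, coincides with the paper's treatment of $II_\eps$ and $\tilde J^1_\eps+\tilde J^2_\eps$ (including the observation that boundedness of $\hat\pi$ must be replaced by $\hat\pi(y_\eps)=\hat\pi(R_\eps x)+O(\eps|u_\eps|)$ with $\pi$ bounded on $\overline{\mc O}$). The genuine divergence is in the bad-set term $B_\eps$, which the paper splits into $\tilde J^3_\eps$ (over $\Omega_\eps^-\setminus G_\eps$) and $\tilde J^4_\eps$ (over $\Omega_\eps^+$), handling the latter with an explicit case distinction $q=1$ versus $q\in(1,2]$ and a direct exponent count $\tfrac2q+\tfrac{p}{q'}-1>0$. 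You instead observe that the $p/q'$-growth of $\hat\pi$ gives $|\hat\pi(y_\eps)|^{q'}\le C(1+|u_\eps|^p)$, which is equi-integrable by the Sobolev embedding $W^{1,p}\hookrightarrow L^{p^*}$ with $p^*>p$, so that $\|\hat\pi(y_\eps)\|_{L^{q'}(\Omega\setminus G_\eps)}\to0$ once $|\Omega\setminus G_\eps|\to0$; pairing this against the bounds $\tfrac1\eps\|\det\nabla y_\eps-1\|_{L^q(\Omega_\eps^-)}\le C$ and $\tfrac1\eps\|\det\nabla y_\eps-1\|_{L^q(\Omega_\eps^+)}\le C\eps^{2/q-1}$, both from \eqref{eq:estdet2} and \eqref{eq:propgp}, kills $B_\eps$ in one stroke and covers $q=1$ and $q\in(1,2]$ uniformly. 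Your route is slightly more abstract and unifies the two cases; the paper's is more explicit and yields concrete vanishing rates. Both ultimately rest on the same two new ingredients, namely \ref{hyp:W6} through \eqref{eq:estdet2} and \ref{hyp:pi3} through Lemma~\ref{lemma:hatpi2}, balanced via the conjugacy of $q$ and $q'$, as you correctly identify.
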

	
	\begin{proof}
		We follow the lines of the proof of Proposition~\ref{prop:new}. 
		The only difference is in the analysis of the term $I_\eps$ in \eqref{eq:I}, which now can be written as
		\begin{align*}
			I_\eps&= \int_{G_\eps}\hat\pi (y_\eps)\div u_\eps\d x+\sum_{k=2}^n \eps^{k-1}\int_{G_\eps}\hat\pi (y_\eps)\iota_k(\nabla u_\eps)\d x\\
			&\quad\, +\frac1\eps\int_{\Omega_\eps^-\setminus G_\eps}\hat\pi(y_\eps)(\det\nabla y_\eps-1) \d x+\frac1\eps \int_{\Omega_\eps^+}\hat\pi (y_\eps)(\det\nabla y_\eps-1)\d x \\
			&=:\tilde{J}^1_\eps+\tilde{J}^2_\eps+\tilde{J}^3_\eps+\tilde{J}^4_\eps,
		\end{align*}
		where the sets $\Omega_\eps^{\pm}$ and $G_\eps$ are defined as in \eqref{eq:omegapiumeno} and \eqref{eq:goodset}. Here we recall that $G_\eps\subset \Omega^-_\eps$ for $\eps$ small enough.
		The first integral $\tilde{J}^1_\eps$ can be handled exactly as in the proof of Proposition~\ref{prop:new}. 
		To conclude it is enough to show that the remaining terms are infinitesimal, as $\eps\to0$. 
		
		By \eqref{eq:estik} and the Lipschitz continuity of $\hat\pi$ we obtain
		\begin{align*}
			|\tilde J^2_\eps|&\le C\sum_{k=2}^n\eps^{k-1}\int_{G_\eps}(1+\eps|u_\eps|)|\nabla u_\eps|^k\d x\\
			&=C \sum_{k=2}^n\left(\eps^{k-1}\int_{G_\eps}|\nabla u_\eps|^{k-p}|\nabla u_\eps|^p\d x+\eps^k\int_{G_\eps}|u_\eps||\nabla u_\eps|^k\d x\right).
		\end{align*}
		Since $|\nabla u_\eps|\le \eps^{-1/2}$ on $G_\eps$, we have that
		\begin{align*}
			|\tilde{J}^2_\eps|&\le C\sum_{k=2}^n\left(\eps^\frac{k+p-2}{2}\|\nabla u_\eps\|_p^p+\eps^\frac k2 \|u_\eps\|_p\right)\le C \sum_{k=2}^n\left(\eps^\frac{k+p-2}{2}+\eps^\frac k2\right)\le C\eps^\frac p2\to 0.
		\end{align*}
		
		Using  \eqref{eq:estdet2}, the boundedness of $(u_\eps)$ in $W^{1,p}(\Omega;\R^n)$, and recalling that $|\det\nabla y_\eps{-}1|\leq 1$ on $\Omega_\eps^-$, we deduce that
		\begin{align*}
			|\tilde{J}^3_\eps|&\le \frac{C}\eps \int_{\Omega_\eps^-\setminus G_\eps}(1+\eps|u_\eps|)|\det\nabla y_\eps{-}1| \d x\\
			& \le  \frac{C}\eps \|\det\nabla y_\eps{-}1\|_{L^2(\Omega_\eps^-)}|\Omega\setminus G_\eps|^\frac 12+ C\int_{\Omega_\eps^-\setminus G_\eps}|u_\eps|\d x\\
			&\le C |\Omega\setminus G_\eps|^\frac 12+C\|u_\eps\|_p|\Omega\setminus G_\eps|^\frac{1}{p'}\le C |\Omega\setminus G_\eps|^\frac 12+C|\Omega\setminus G_\eps|^\frac{1}{p'},
		\end{align*}
		where the last term goes to zero, as $\eps\to 0$, by \eqref{eq:goodsetvanish}.
		
		To deal with $\tilde{J}^4_\eps$ we consider the two cases $q=1$ and $q\in(1,2]$, separately. If $q=1$, the function $\hat\pi$ is bounded and so, we have
		\begin{equation*}
			|\tilde{J}^4_\eps|\le \frac{C}\eps\|\det\nabla y_\eps-1\|_{L^1(\Omega_\eps^+)}\le C\eps,
		\end{equation*}
		where in the last inequality we used \eqref{eq:estdet2}.
		
		If instead $q\in (1,2]$, by using the $p/q'$-growth of $\hat\pi$ we have
		\begin{align*}
			|\tilde{J}^4_\eps|&\le \frac{C}\eps \int_{\Omega_\eps^+}(1+(\eps|u_\eps|)^\frac{p}{q'}) |\det\nabla y_\eps-1| \d x\\
			&\le  \frac{C}\eps \|\det\nabla y_\eps-1\|_{L^q(\Omega_\eps^+)} |\Omega_\eps^+|^\frac{1}{q'}+C\eps^{\frac{p}{q'}-1}\|\det\nabla y_\eps-1\|_{L^q(\Omega_\eps^+)}\|u_\eps\|_p^\frac{p}{q'}.
		\end{align*}
		By \eqref{eq:estdet2} and the boundedness of $(u_\eps)$ in $W^{1,p}(\Omega;\R^n)$ we deduce
		\begin{align*}
			|\widetilde{J}^4_\eps|&\le C \eps^{\frac{2}{q}-1}|\Omega^+_\eps|^\frac{1}{q'}+ C \eps^{\frac{2}{q}+\frac{p}{q'}-1}\|u_\eps\|_p^\frac{p}{q'}\le C |\Omega^+_\eps|^\frac{1}{q'}+ C \eps^{\frac{2}{q}+\frac{p}{q'}-1}.
		\end{align*}
		It is easy to verify that $\frac{2}{q}+\frac{p}{q'}-1>0$. Moreover, since $G_\eps\subset\Omega_\eps^-$ for $\eps$ small enough, we have that $|\Omega_\eps^+|\to0$ by \eqref{eq:goodsetvanish}. Therefore, we conclude that in both cases $\widetilde{J}^4_\eps$ is infinitesimal, as $\eps\to 0$.
	\end{proof}
	
	The next proposition collects both the liminf and the limsup inequalities, which thus provide a full $\Gamma$-convergence result.
	
	\begin{prop}[\textbf{Liminf and limsup inequalities}]\label{prop:gammaliminf2}
		Assume \ref{hyp:W1}--\ref{hyp:W6}, \ref{hyp:pi1}, and \ref{hyp:pi3}. For every $\eps\in(0,\eps_0)$ let $y_\eps\in \mathring{W}^{1,p}(\Omega;\R^n)$ be such that there exist $R_\eps\in SO(n)$ converging to $R_0\in\RR$ and the corresponding displacements $u_\eps$, defined as in \eqref{eq:displacement}, weakly converge in $\mathring{W}^{1,p}(\Omega;\R^n)$ to $u_0\in\mathring{H}^{1}(\Omega;\R^n)$. Then		
		\begin{equation*}
			\EE_0(u_0,R_0)\le \liminf\limits_{\eps\to 0}\frac{1}{\eps^2}\EE_\eps(y_\eps),
		\end{equation*}
		where $\EE_0$ is the functional defined in \eqref{eq:E0}.
		
		On the other hand, for every $(u_0,R_0)\in \mathring{H}^{1}(\Omega;\R^n)\times\RR$ there exist $(u_\eps,R_\eps)\in \mathring{W}^{1,p}(\Omega;\R^n)\times SO(n)$ such that $u_\eps\wto u_0$ weakly in $\mathring{W}^{1,p}(\Omega;\R^n)$, $R_\eps\to R_0$ and, setting $y_\eps(x):=R_\eps(x+\eps u_\eps(x))$, there holds
		\begin{equation*}	
			\limsup\limits_{\eps\to 0}\frac{1}{\eps^2}\EE_\eps(y_\eps)\le \EE_0(u_0,R_0).
		\end{equation*}
	\end{prop}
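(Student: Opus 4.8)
The plan is to mimic closely the proofs of Proposition~\ref{prop:gammaliminf} and Proposition~\ref{prop:gammalimsup}, systematically replacing Lemma~\ref{lemma:est1}, Proposition~\ref{prop:compactness}, Proposition~\ref{prop:new}, and Lemma~\ref{lemma:hatpi1} by their counterparts Lemma~\ref{lemma:est2}, Proposition~\ref{prop:compactness2}, Proposition~\ref{prop:new2}, and Lemma~\ref{lemma:hatpi2}, which are available under the present assumptions \ref{hyp:W1}--\ref{hyp:W6}, \ref{hyp:pi1}, and \ref{hyp:pi3}. A preliminary remark is that all integrals are well defined on the new admissible class $Y^p_q$ of \eqref{eq:Xpq}: for $y\in Y^p_q$ the $p/q'$-growth of $\hat\pi$ from Lemma~\ref{lemma:hatpi2} and H\"older's inequality give $\hat\pi(y)\det\nabla y\in L^1(\Omega)$, and likewise for $\pi$ by \ref{hyp:pi3}.

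For the liminf inequality I would first assume, without loss of generality, that $\liminf_{\eps\to0}\frac1{\eps^2}\EE_\eps(y_\eps)<+\infty$, so that along a subsequence $y_\eps\in Y^p_q$ and $\EE_\eps(y_\eps)\le C\eps^2$. By Lemma~\ref{lemma:est2} and Proposition~\ref{prop:compactness2} the hypotheses of Proposition~\ref{prop:new2} are met, and, invoking Remark~\ref{rmk:invariant} exactly as in the proof of Proposition~\ref{prop:gammaliminf}, I may further assume that the rotations provided by Lemma~\ref{lemma:est2} coincide with $R_\eps$ and that the associated displacements converge weakly to $u_0$ itself. Then I introduce the auxiliary energy $\widehat\EE_\eps$ obtained from \eqref{eq:energy1} by replacing $\pi$ with the function $\hat\pi$ of Lemma~\ref{lemma:hatpi2}; since $\hat\pi\le\pi$ everywhere and $\hat\pi\equiv\pi$ on $\mc O$, one has $\frac1{\eps^2}\EE_\eps(y_\eps)\ge\frac1{\eps^2}\widehat\EE_\eps(y_\eps)$, and the right-hand side splits into the three terms of \eqref{eq:terms}. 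The elastic term is handled by the lower-semicontinuity argument of \cite[Proof of Theorem~2.4]{AgoDMDS}, giving $\liminf\frac1{\eps^2}\int_\Omega W(x,\nabla y_\eps)\,\d x\ge\frac12\int_\Omega Q(x,e(u_0))\,\d x$; the pressure term converges, by Proposition~\ref{prop:new2} (whose hypothesis \eqref{eq:estdet2} holds by Lemma~\ref{lemma:est2}), to $\int_{\partial\Omega}\pi(R_0x)n_{\partial\Omega}(x)\cdot u_0(x)\,\d\mc H^{n-1}(x)$; and the last term $\frac1\eps\int_\Omega(\hat\pi(y_{R_\eps})-\hat\pi(x))\,\d x=\frac1\eps\int_\Omega(\pi(R_\eps x)-\pi(x))\,\d x$ is nonnegative because $\hat\pi\equiv\pi$ on $\mc O$ and $I\in\RR$. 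Summing the three contributions yields $\EE_0(u_0,R_0)\le\liminf_{\eps\to0}\frac1{\eps^2}\EE_\eps(y_\eps)$.

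For the limsup inequality, given $(u_0,R_0)\in\mathring{H}^{1}(\Omega;\R^n)\times\RR$, I would choose by mollification $u_\eps\in\mathring{W}^{1,\infty}(\Omega;\R^n)$ with $u_\eps\to u_0$ strongly in $H^1(\Omega;\R^n)$ and $\eps^{1/2}\|u_\eps\|_{W^{1,\infty}}\le1$, set $R_\eps:=R_0$ and $y_\eps(x):=R_0(x+\eps u_\eps(x))$. Using \eqref{eq:baricenter}, \eqref{eq:sviluppodet}, and \eqref{eq:estik} exactly as in Proposition~\ref{prop:gammalimsup}, $y_\eps$ has zero average and, for $\eps$ small, $|\det\nabla y_\eps-1|\le C\eps^{1/2}\le\frac12$; in particular $\det\nabla y_\eps>0$ a.e.\ and $\det\nabla y_\eps\in L^\infty(\Omega)\subset L^q(\Omega)$, so $y_\eps\in Y^p_q$, and moreover $y_\eps(\Omega)$ lies in the neighborhood of $\overline{\mc O}$ on which $\pi\equiv\hat\pi$. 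Since $R_0\in\RR$ and $I\in\RR$, we have $\int_\Omega\hat\pi(y_{R_0})\,\d x=\int_\Omega\pi(R_0x)\,\d x=\int_\Omega\pi(x)\,\d x=\int_\Omega\hat\pi(x)\,\d x$, whence $\frac1{\eps^2}\EE_\eps(y_\eps)=\frac1{\eps^2}\int_\Omega W(x,I+\eps\nabla u_\eps)\,\d x+\frac1\eps\int_\Omega(\hat\pi(y_\eps)\det\nabla y_\eps-\hat\pi(y_{R_0}))\,\d x$. The first term has the correct $\limsup$ bound by \cite[Proof of Theorem~2.4]{AgoDMDS}; the bound $|\det\nabla y_\eps-1|\le\eps|\nabla u_\eps|+C\eps$ shows that \eqref{eq:estdet2} holds, so Proposition~\ref{prop:new2} applies and gives the convergence of the second term to $\int_{\partial\Omega}\pi(R_0x)n_{\partial\Omega}(x)\cdot u_0(x)\,\d\mc H^{n-1}(x)$. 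Adding the two bounds yields $\limsup_{\eps\to0}\frac1{\eps^2}\EE_\eps(y_\eps)\le\EE_0(u_0,R_0)$.

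The main obstacle in this circle of ideas, as throughout the paper, is that the pressure potential is not a continuous perturbation of the elastic energy, since determinants need not converge when deformations converge only in $W^{1,p}$ with $p\le2$; however, that difficulty has already been isolated and resolved in Proposition~\ref{prop:new2}, whose hypothesis \eqref{eq:estdet2} is precisely the refined determinant estimate produced by the new coercivity assumption \ref{hyp:W6} together with the growth control \ref{hyp:pi3} on $\pi^-$, via Lemma~\ref{lemma:est2}. Consequently, the genuinely new work here reduces to verifying admissibility ($y\in Y^p_q$, i.e.\ $\det\nabla y\in L^q(\Omega)$) for the objects involved and that all displayed integrals make sense on $Y^p_q$; for the recovery sequence this is immediate from the $L^\infty$ bound on $\det\nabla y_\eps$, and for general competitors from the $p/q'$-growth of $\hat\pi$ and H\"older's inequality.
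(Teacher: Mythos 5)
Your proposal is correct and follows essentially the same route as the paper, which simply cites the proofs of Propositions~\ref{prop:gammaliminf} and~\ref{prop:gammalimsup} with Proposition~\ref{prop:new2} in place of Proposition~\ref{prop:new}, together with the remark that the recovery deformations lie in $Y^p_q$ because $\det\nabla y_\eps\in L^\infty(\Omega)$. Your additional checks (well-definedness of the pressure integral on $Y^p_q$ via the $p/q'$-growth of $\hat\pi$, and the cancellation $\int_\Omega\hat\pi(y_{R_0})\,\d x=\int_\Omega\hat\pi(x)\,\d x$ from $R_0,I\in\RR$) are sound and merely make explicit what the paper leaves to the reader.
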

	
	\begin{proof}
		The proof is the same of Propositions~\ref{prop:gammaliminf} and~\ref{prop:gammalimsup}, once we have at our disposal Proposition~\ref{prop:new2}.
		The only additional remark is that the deformations $y_\eps$ in the recovery sequence are admissible since $\det\nabla y_\eps\in L^\infty(\Omega)$ by construction (see \eqref{eq:Xpq} for the definition of~$Y^p_q$). 
	\end{proof}
	
	Combining the previous results and arguing as in Theorem~\ref{thm:convmin}, one can infer the following convergence result for almost minimizers.
	
	\begin{thm}[\textbf{Convergence of almost minimizers}]\label{thm:convmin2}
		Assume \ref{hyp:W1}--\ref{hyp:W6}, \ref{hyp:pi1}, and \ref{hyp:pi3}.
		If $(y_\eps)$ is a sequence of almost minimizers for the energies $\EE_\eps$, that is, 
		\begin{equation*}
			\EE_\eps(y_\eps)\le \inf\limits_{\mathring{W}^{1,p}(\Omega;\R^n)}\EE_\eps +o(\eps^2), 
		\end{equation*}
		then there exist $R_\eps\in SO(n)$ such that, up to passing to a subsequence, we have
		\begin{itemize}
			\item $u_\eps\to u_0$ strongly in $\mathring{W}^{1,p}(\Omega;\R^n)$ with $u_0\in \mathring{H}^{1}(\Omega;\R^n)$;\smallskip
			\item $R_\eps\to R_0$ with $R_0\in \RR$.
		\end{itemize}
		Furthermore, the pair $(u_0,R_0)$ is a minimizer of $\EE_0$ on $\mathring{H}^{1}(\Omega;\R^n)\times \RR$ and
		\begin{equation*}
			\lim\limits_{\eps\to 0} \frac{1}{\eps^2} \Big(\inf\limits_{\mathring{W}^{1,p}(\Omega;\R^n)}\EE_\eps\Big) =\min
			\big\{ \EE_0(u,R) : \ (u,R)\in\mathring{H}^{1}(\Omega;\R^n)\times \RR \big\}.
		\end{equation*}
	\end{thm}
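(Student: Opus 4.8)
The plan is to follow the proof of Theorem~\ref{thm:convmin} essentially line by line, replacing Lemma~\ref{lemma:est1}, Proposition~\ref{prop:compactness}, Proposition~\ref{prop:new}, and Propositions~\ref{prop:gammaliminf}--\ref{prop:gammalimsup} by their Section~\ref{sec:general} counterparts, namely Lemma~\ref{lemma:est2}, Proposition~\ref{prop:compactness2}, Proposition~\ref{prop:new2}, and Proposition~\ref{prop:gammaliminf2}. First, by the last assertion of Lemma~\ref{lemma:est2} we have $\inf_{\mathring W^{1,p}(\Omega;\R^n)}\EE_\eps\le 0$ for $\eps\in(0,\eps_0)$, so that an almost-minimizing sequence satisfies $\EE_\eps(y_\eps)\le o(\eps^2)\le C\eps^2$. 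Lemma~\ref{lemma:est2} then produces rotations $R_\eps\in SO(n)$ and rescaled displacements $u_\eps$, defined by \eqref{eq:displacement}, that are bounded in $W^{1,p}(\Omega;\R^n)$ and satisfy \eqref{eq:estdet2} and \eqref{eq:estgp2}; Proposition~\ref{prop:compactness2} gives, up to a subsequence, $u_\eps\wto u_0$ in $\mathring W^{1,p}(\Omega;\R^n)$ with $u_0\in\mathring H^1(\Omega;\R^n)$ and $R_\eps\to R_0\in\RR$.

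Next, to identify $(u_0,R_0)$ as a minimizer of $\EE_0$ and to obtain the convergence of the rescaled infima, I would run the usual $\Gamma$-convergence chain of inequalities: for an arbitrary competitor $(v,S)\in\mathring H^1(\Omega;\R^n)\times\RR$, let $(v_\eps,S_\eps)$ be a recovery sequence as in Proposition~\ref{prop:gammaliminf2}, set $z_\eps(x):=S_\eps(x+\eps v_\eps(x))$ (which is admissible since $\det\nabla z_\eps\in L^\infty(\Omega)$, see \eqref{eq:Xpq}), and combine the liminf inequality applied to $(y_\eps)$ with $\EE_\eps(y_\eps)\le\inf\EE_\eps+o(\eps^2)\le\EE_\eps(z_\eps)+o(\eps^2)$ and the limsup inequality, exactly as in \eqref{eq:vS}. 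This yields $\EE_0(u_0,R_0)\le\EE_0(v,S)$ for every $(v,S)$, hence minimality, together with $\frac1{\eps^2}\inf_{\mathring W^{1,p}(\Omega;\R^n)}\EE_\eps\to\min\EE_0$.

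Finally, for the strong convergence $u_\eps\to u_0$ in $W^{1,p}(\Omega;\R^n)$ I would reproduce claims (a), (b), (c) from the proof of Theorem~\ref{thm:convmin}. Taking $(v,S)=(u_0,R_0)$ in the chain above gives $\frac1{\eps^2}\EE_\eps(y_\eps)\to\EE_0(u_0,R_0)$; decomposing $\EE_\eps(y_\eps)\ge\widehat\EE_\eps(y_\eps)$ as in \eqref{eq:terms} and using Proposition~\ref{prop:new2} (which here gives a genuine limit on all of $\Omega$, slightly simplifying the argument) together with the nonnegativity of $\frac1\eps\int_\Omega(\hat\pi(y_{R_\eps})-\hat\pi(x))\d x$ guaranteed by \eqref{eq:IinR}, I get $\limsup_{\eps\to0}\frac1{\eps^2}\int_\Omega W(x,\nabla y_\eps)\d x\le\frac12\int_\Omega Q(x,e(u_0))\d x$. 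The Taylor expansion of $W$ around $I$ and the weak convergence $\chi_{G_\eps}e(u_\eps)\wto e(u_0)$ in $L^2$ (property (ii) in the proof of Proposition~\ref{prop:compactness}) give the matching lower bound, so that $\int_\Omega Q(x,\chi_{G_\eps}e(u_\eps))\d x\to\int_\Omega Q(x,e(u_0))\d x$, and coercivity of $Q(x,\cdot)$ on $\R^{n\times n}_{\rm sym}$ upgrades this to claim (a), i.e.\ strong $L^2$ convergence of $\chi_{G_\eps}e(u_\eps)$ to $e(u_0)$. Claim (b), equi-integrability of $\big(\frac1{\eps^p}\dist^p(\nabla y_\eps;SO(n))\big)$, follows verbatim from \cite[Theorem~2.5]{AgoDMDS}. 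For claim (c), equi-integrability of $(|\nabla u_\eps|^p)$, I would split $\dist(\nabla y_\eps;SO(n))=f_1^{\eps,\eta}+f_2^{\eps,\eta}$ as in the proof of Theorem~\ref{thm:convmin}, apply the generalized rigidity estimate Theorem~\ref{lemma:rigidity2} with exponents $p<\alpha$, estimate $|\widetilde R_{\eps,\eta}-R_\eps|/\eps$, and deduce the uniform smallness of $\int_A|\nabla u_\eps|^p\d x$ on sets $A$ of small measure. Then Vitali's convergence theorem combined with Korn's second inequality turns claims (a) and (b)--(c) into $u_\eps\to u_0$ strongly in $W^{1,p}(\Omega;\R^n)$. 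The main obstacle is, as in Theorem~\ref{thm:convmin}, claim (c): one must carefully track the mixed-exponent decomposition through Theorem~\ref{lemma:rigidity2}; everything else is a routine transcription, and the only genuinely new point is that the stronger Proposition~\ref{prop:new2} replaces Proposition~\ref{prop:new} and in fact makes the treatment of the pressure term easier than in the nonnegative case.
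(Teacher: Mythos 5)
Your proposal is correct and reproduces exactly the argument the paper intends: the authors prove Theorem~\ref{thm:convmin2} by the one-line remark that one should combine the Section~\ref{sec:general} counterparts (Lemma~\ref{lemma:est2}, Propositions~\ref{prop:compactness2}, \ref{prop:new2}, and \ref{prop:gammaliminf2}) and argue as in Theorem~\ref{thm:convmin}, which is precisely the transcription you carry out, including the chain of inequalities \eqref{eq:vS}, the decomposition \eqref{eq:terms}, claims (a)--(c), the use of Theorem~\ref{lemma:rigidity2} for equi-integrability, and the Vitali/Korn conclusion. Your observation that Proposition~\ref{prop:new2} gives a genuine limit rather than a liminf--limit pair, thereby mildly streamlining the treatment of the pressure term, is also accurate.
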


	\section{A refined $\Gamma$-limit and a comparison with dead loads}\label{sec:secondorder}
	
	In this section we make a comparison with the results obtained in \cite{MaorMora}, in the case of dead loads. In particular, we compute a refined version of the $\Gamma$-limit of the rescaled energies $\frac 1{\eps^2}\mc E_\eps$.
	
	We assume \ref{hyp:W1}--\ref{hyp:W5} and \ref{hyp:pi1}, together with either \ref{hyp:pi2} or \ref{hyp:pi3} and \ref{hyp:W6}. In addition, we require\smallskip
	\begin{enumerate}[label=\textup{($\pi$4)}]
		\item\label{hyp:pi4} $\pi$ is of class $C^2$ in the open set given by \ref{hyp:pi1}.\smallskip
	\end{enumerate}
	Under this assumption any optimal rotation $R_0\in \RR$ satisfies, in addition to \eqref{eq:EL}, the following condition:
	\begin{equation}\label{eq:2variation}
		\int_{\Omega}\big(\nabla\pi (R_0x)\cdot R_0 A^2x+D^2\pi(R_0 x)R_0 Ax\cdot R_0 Ax \big)\d x\ge0\qquad\text{for every }A\in \R^{n\times n}_{\rm skew}.
	\end{equation}
	This is obtained by imposing that the second variation of the functional in \eqref{eq:optrot} is positive semidefinite along the curve $t\mapsto R_0e^{tA}$.
	By the Divergence Theorem, condition \eqref{eq:2variation} can be rewritten as
	\begin{equation}\label{eq:2variation2}
		\int_{\partial\Omega}\big(\nabla\pi (R_0x)\cdot R_0Ax\big)n_{\partial\Omega}(x)\cdot Ax\d\mc H^{n-1}(x)\ge 0\qquad\text{for every }A\in \R^{n\times n}_{\rm skew}.
	\end{equation}
	
	In \cite{MaorMora} the applied body force is assumed to be a dead load of the form
	\begin{equation*}
		-\int_\Omega g(x)\cdot y(x) \d x,
	\end{equation*}
	where $g\in L^2(\Omega;\R^n)$ is given. In this setting the authors proved that the set of optimal rotations, which is defined as
	\begin{equation}\label{eq:Rg}
		\RR_g:=\argmin\limits_{R\in SO(n)}\left\{-\int_\Omega g(x)\cdot Rx\d x\right\},
	\end{equation} 
	is a submanifold of $SO(n)$ (see \cite[Proposition~4.1]{MaorMora}). Moreover, if $(y_\eps)$ is a sequence of deformations with total energy of order $\eps^2$, 
	then any sequence of rotations $(R_\eps)$ provided by the rigidity estimate converges to an optimal rotation $R_0$ (as in Propositions~\ref{prop:compactness} and~\ref{prop:compactness2}) and, in addition, satisfies
	\begin{equation}\label{eq:distsqrt}
		\dist_{SO(n)}(R_\eps;\RR_g)\leq C\sqrt\eps,
	\end{equation}
	where $\dist_{SO(n)}$ is the intrinsic distance in $SO(n)$, that is,
	\begin{equation*}
		\dist_{SO(n)}(R,S):=\min\big\{ |A| : \ A\in\R^{n\times n}_{\rm skew},\ R=Se^A\big\}.
	\end{equation*}
	Finally, the $\Gamma$-limit of the rescaled energies can be expressed as
	\begin{equation}\label{eq:oldG}
		\frac 12\int_\Omega Q(x,e(u_0)(x))\d x -\int_\Omega g(x)\cdot R_0u_0(x) \d x -\frac12 \int_\Omega g(x)\cdot R_0A_0^2x \d x,
	\end{equation}
	where $u_0$ is the limit displacement, $A_0\in\R^{n\times n}_{\rm skew}$ is the limit of 
	\begin{equation*}
		\frac 1{\sqrt\eps} R_0^T(R_\eps -\mc P_{\!g}(R_\eps))
	\end{equation*}
	(which exists up to subsequences), and $\mc P_{\!g}$ is the projection operator on $\RR_g$ (see \cite[Section~5]{MaorMora}).
	We note that the last term in \eqref{eq:oldG} is the second variation of the functional in \eqref{eq:Rg} at $R_0$ computed in the direction $A_0$.
	In \cite{MaorMora} it is then proved that $A_0=0$ for sequences $(y_\eps)$ of almost minimizers, so that the last term in \eqref{eq:oldG} is identically equal to $0$ on minimizers.
	
	In our setting of a pressure live load, a first difference with \cite{MaorMora} is that the set of optimal rotations may not be a manifold, as the following example shows.
	
	\begin{ex}\label{ex:ex}
		Let $n=2$ and let $\Omega$ be the set given in polar coordinates by
		\begin{equation*}
			\Omega=\{(\rho,\theta):\ \rho<1 \text{ if }\theta\in[0,\pi/2]\cup[\pi,3\pi/2],\text{ and }\rho<2\text{ otherwise}\}
		\end{equation*}
		\begin{figure}
			\begin{tikzpicture}
				\filldraw[color=black!5, fill=black!5] (0,0) -- (1,0) arc (0:90:1);
				\filldraw[color=black!5, fill=black!5] (0,0) -- (-1,0) arc (180:270:1);
				\filldraw[color=black!5, fill=black!5] (0,0) -- (0,2) arc (90:180:2);
				\filldraw[color=black!5, fill=black!5] (0,0) -- (0,-2) arc (270:360:2);
				\draw[thick] (1,0) arc [start angle=0, end angle=90, radius=1cm];
				\draw[thick] (-1,0) arc [start angle=180, end angle=270, radius=1cm];
				\draw[thick] (0,2) arc [start angle=90, end angle=180, radius=2cm];
				\draw[thick] (0,-2) arc [start angle=270, end angle=360, radius=2cm];
				\draw[->]  (2,0) -- (3,0);
				\node [below] at (3,0) {\small$x_1$};
				\draw[thick]  (1,0) -- (2,0);
				\draw (-3,0) -- (-2,0);
				\draw[thick] (-2,0) -- (-1,0);
				\draw[dashed]  (-1,0) -- (1,0);
				\draw[->]  (0,2) -- (0,3);
				\node [left] at (0,3) {\small$x_2$};
				\draw[thick]  (0,1) -- (0,2);
				\draw (0,-3) -- (0,-2);
				\draw[thick] (0,-2) -- (0,-1);
				\draw[dashed]  (0,-1) -- (0,1);
			\end{tikzpicture}
			\caption{The set $\Omega$ in Example~\ref{ex:ex}.}\label{figure}
		\end{figure}
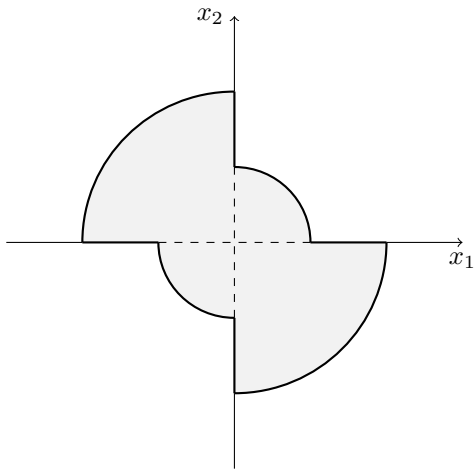
		(see Figure~\ref{figure}). We note that \eqref{eq:baricenter} is satisfied. 
		
		Let $\varphi\in C^3([0,\pi/2])$ be a nonnegative function satisfying $\varphi(0)=0$ and attaining its maximum at $\alpha=\pi/2$. Suppose, in addition, that the first, second, and third derivatives of $\varphi$ vanish at $\alpha=0$ and at $\alpha=\pi/2$.
		Let $\psi\in C^2([1,+\infty))$ be a bounded function with bounded first and second derivatives satisfying $\psi(1)=\psi'(1)=\psi''(1)=0$ and $\int_{1}^{2}\rho\psi(\rho)\d\rho=1$. A simple choice of $\psi$ could be $\psi(\rho)=\frac{20}{9}(\rho-1)^3$ for $\rho\in [1,2]$, suitably extended to $[2,+\infty)$.
		
		We consider the following pressure function:
		\begin{equation*}
			\pi(x_1,x_2)=\begin{cases}
				\psi(\sqrt{x_1^2+x_2^2})\varphi'(\arctan \frac{x_2}{x_1}) &\text{ if }x_1>0,x_2>0,\text{ and }x_1^2+x_2^2>1,\smallskip\\
				0 &\text{ otherwise.}
			\end{cases}
		\end{equation*}
		Elementary computations show that $\pi$ is Lipschitz continuous and of class $C^2(\R^n)$, so that \ref{hyp:pi1} and \ref{hyp:pi4} are satisfied; furthermore, $\pi$ is bounded, so also \ref{hyp:pi3} holds true.
		
		We recall that $SO(2)$ can be identified with the unit sphere $\mathbb{S}^1$ via the map 
		\begin{equation*}
			[0,2\pi)\ni\alpha\mapsto R^\alpha:=\begin{pmatrix}
				\cos\alpha& -\sin\alpha\\ \sin\alpha& \cos\alpha
			\end{pmatrix},
		\end{equation*}
		so that we can write
		\begin{equation*}
			\RR=\argmin\limits_{\alpha\in[0,2\pi)}\left\{\int_{R^\alpha\Omega}\pi\left(x_1,x_2\right)\d x_1\d x_2\right\}.
		\end{equation*}
		For $\alpha\in [0,\pi/2)$ we can compute
		\begin{align*}
			\int_{R^\alpha\Omega}\pi\left(x_1,x_2\right)\d x_1\d x_2=\int_{1}^{2}\int_{0}^{\alpha}\psi(\rho)\varphi'(\theta)\rho\d\theta\d\rho=\int_{1}^{2}\rho\psi(\rho)\d\rho\int_{0}^{\alpha}\varphi'(\theta)\d\theta=\varphi(\alpha).
		\end{align*}
		Similarly, we obtain
		\begin{equation}\label{eq:phi}
			\int_{R^\alpha\Omega}\pi\left(x_1,x_2\right)\d x_1\d x_2 =\phi(\alpha):=\begin{cases}
				\varphi(\alpha)&\text{ if }\alpha\in [0,\pi/2),\\
				\varphi(\pi/2)-\varphi(\alpha-\pi/2) &\text{ if }\alpha\in [\pi/2,\pi),\\
				\varphi(\alpha-\pi) &\text{ if }\alpha\in [\pi,3\pi/2),\\
				\varphi(\pi/2)-\varphi(\alpha-3\pi/2) &\text{ if }\alpha\in [3\pi/2,2\pi).
			\end{cases}
		\end{equation}
		Since $\varphi$ is nonnegative and maximized at $\pi/2$, the set of optimal rotations corresponds to the zero-level set of the function $\phi$. In particular, in $[0,\pi/2)$
		this is given by the zero-level set of $\varphi$, which can be any closed set at positive distance from $\pi/2$.
		
		This example shows that, in general, we cannot expect $\RR$ to be a manifold.
	\end{ex}
	
	Since $\RR$ is not in general a manifold, the projection operator on $\RR$ is not well defined. However, in the limiting process we can keep track of the distance of the approximating rotations $R_\eps$ from $\RR$ through a suitable sequence of skew-symmetric matrices $A_\eps$. In contrast with \eqref{eq:distsqrt}, the scaling of this distance may be larger than $\sqrt\eps$ (actually, larger than $\sqrt[k]{\eps}$ for any given $k>2$), see Example~\ref{rmk:exsqrt}. 
	To recover compactness of $(A_\eps)$ we rescale it by $|A_\eps|\vee \sqrt{\eps}$ and we denote by $A_0$ its limit. The $\Gamma$-limit of the rescaled energies can be then expressed as
	\begin{equation*}
		\EE_0(u_0,R_0)+\frac 12\FF(R_0,A_0),
	\end{equation*}
	where $\FF\colon \RR\times \R^{n\times n}_{\rm skew}\to [0,+\infty)$ is the second variation of the functional in \eqref{eq:optrot}.
	This additional term measures the cost due to the fluctuations of the approximating rotations from the set $\RR$.
	Arguing as in \eqref{eq:2variation} and \eqref{eq:2variation2}, the functional $\mc F$ takes the form
	\begin{equation*}
		\FF(R_0,A_0)=\int_{\partial\Omega}\big(\nabla\pi (R_0x)\cdot R_0 A_0x\big)A_0x\cdot n_{\partial\Omega}(x)\d\mc H^{n-1}(x).
	\end{equation*}
	For sequences $(y_\eps)$ of almost minimizers the limit $A_0$ may be different from $0$; however, we have $\FF(R_0,A_0)=0$.
	More precisely, we have the following result.
	
	\begin{thm}\label{thm:Gammaref}
		Under the assumptions of Proposition~\ref{prop:compactness} or Proposition~\ref{prop:compactness2}, we have in addition that there exist $A_\eps\in \R^{n\times n}_{\rm skew}$ such that $R_\eps=S_\eps e^{A_\eps}$ for some $S_\eps\in\RR$ and, up to subsequences, $A_\eps\to 0$ and $\frac{A_\eps}{|A_\eps|\vee \sqrt{\eps}}\to A_0$ for some $A_0\in \R^{n\times n}_{\rm skew}$ with $|A_0|\le 1$.
		
		If also \ref{hyp:pi4} is in force, then
		\begin{equation*}
			\Gamma-\lim\limits_{\eps\to 0}\frac{1}{\eps^2}\EE_\eps(y_\eps)= \EE_0(u_0,R_0)+\frac12\FF(R_0,A_0)
		\end{equation*}
		with respect to the following convergences: $u_\eps\wto u_0$ weakly in $\mathring{W}^{1,p}(\Omega;\R^n)$, $R_\eps\to R_0$, $A_\eps\to 0$, and $\frac{A_\eps}{|A_\eps|\vee \sqrt{\eps}}\to A_0$.
		
		Finally, if $(y_\eps)$ is a sequence of almost minimizers for the energies $\EE_\eps$, that is, \eqref{eq:almmin} holds,
		then there exist $R_\eps\in SO(n)$ and $A_\eps\in \R^{n\times n}_{\rm skew}$ as above
		such that, up to a subsequence, we have
		\begin{itemize}
			\item $u_\eps\to u_0$ strongly in $\mathring{W}^{1,p}(\Omega;\R^n)$ with $u_0\in \mathring{H}^{1}(\Omega;\R^n)$,\smallskip
			\item $R_\eps\to R_0$ with $R_0\in \RR$,\smallskip
			\item $A_\eps\to 0$ and $\frac{A_\eps}{|A_\eps|\vee \sqrt{\eps}}\to A_0$ with $A_0\in \R^{n\times n}_{\rm skew}$, $|A_0|\le 1$.
		\end{itemize}
		Furthermore, the triplet $(u_0,R_0,A_0)$ is a minimizer of $\EE_0+\frac12\FF$ on $\mathring{H}^{1}(\Omega;\R^n)\times \RR\times  \R^{n\times n}_{\rm skew}$, $\FF(R_0,A_0)=0$, and
		\begin{equation*}
			\begin{aligned}
				\lim\limits_{\eps\to 0}\frac{1}{\eps^2} \Big(\inf\limits_{\mathring{W}^{1,p}(\Omega;\R^n)}\EE_\eps\Big)
				& =\min \bigg\{ \EE_0(u,R)+\frac12\FF(R,A): \ (u,R,A)\in \mathring{H}^{1}(\Omega;\R^n)\times \RR\times  \R^{n\times n}_{\rm skew}\bigg\}
				\\
				& =\min \big\{ \EE_0(u,R): \ (u,R)\in \mathring{H}^{1}(\Omega;\R^n)\times \RR \big\}.
			\end{aligned}
		\end{equation*}
	\end{thm}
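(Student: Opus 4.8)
The plan is to build the refinement on top of the constructions of Sections~\ref{sec:nonnegative}--\ref{sec:general}, the only genuinely new ingredient being a second-order expansion of the term $\frac1\eps\int_\Omega\bigl(\pi(R_\eps x)-\pi(x)\bigr)\d x$, which in Propositions~\ref{prop:gammaliminf} and~\ref{prop:gammaliminf2} was discarded by its sign. First I fix the rotations. Let $R_\eps\to R_0\in\RR$ be the rotations produced by Proposition~\ref{prop:compactness} or~\ref{prop:compactness2}; for $\eps$ small I choose $S_\eps\in\RR$ realizing the (attained, $\RR$ being compact) distance $\dist_{SO(n)}(R_\eps;\RR)$ and let $A_\eps$ be the unique small skew-symmetric matrix with $R_\eps=S_\eps e^{A_\eps}$, so that $|A_\eps|=\dist_{SO(n)}(R_\eps;\RR)\to 0$ and $S_\eps\to R_0$. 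Since $\bigl|A_\eps/(|A_\eps|\vee\sqrt\eps)\bigr|\le1$, up to a subsequence $A_\eps/(|A_\eps|\vee\sqrt\eps)\to A_0$ with $A_0$ skew-symmetric and $|A_0|\le1$. We write $\mu_\eps:=|A_\eps|\vee\sqrt\eps$, so that $\mu_\eps\to0$ and $\mu_\eps^2/\eps\ge1$, and we note that $\FF(R_0,\cdot)$ is a finite quadratic form on $\R^{n\times n}_{\rm skew}$, nonnegative by \eqref{eq:2variation}, whose boundary representation follows from the Divergence Theorem exactly as in the passage from \eqref{eq:2variation} to \eqref{eq:2variation2}.

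For the liminf inequality I would start, as in Proposition~\ref{prop:gammaliminf}, from $\frac1{\eps^2}\EE_\eps(y_\eps)\ge\frac1{\eps^2}\widehat\EE_\eps(y_\eps)$ and split the right-hand side into the elastic term, the term $\frac1\eps\int_\Omega(\hat\pi(y_\eps)\det\nabla y_\eps-\hat\pi(y_{R_\eps}))\d x$, and $\frac1\eps\int_\Omega(\hat\pi(y_{R_\eps})-\hat\pi(x))\d x$. The first contributes $\liminf\ge\frac12\int_\Omega Q(x,e(u_0))\d x$ by the argument of \cite[Proof of Theorem~2.4]{AgoDMDS} (this quantity depends only on $e(u_0)$, hence is unaffected by which near-optimal rotation is used there); the second converges to $\int_{\partial\Omega}\pi(R_0x)n_{\partial\Omega}(x)\cdot u_0(x)\d\mc H^{n-1}(x)$ by Proposition~\ref{prop:new} or~\ref{prop:new2}, the estimate \eqref{eq:estdet}/\eqref{eq:estdet2} being a property of $y_\eps$ alone. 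For the third term, since $\hat\pi\equiv\pi$ on $\mc O$, $R_\eps x\in\mc O$, and $S_\eps,I\in\RR$ imply $\int_\Omega\pi(S_\eps x)\d x=\int_\Omega\pi(x)\d x$, I rewrite it as $\frac1\eps(g_\eps(1)-g_\eps(0))$ with $g_\eps(t):=\int_\Omega\pi(S_\eps e^{tA_\eps}x)\d x$. Under \ref{hyp:pi4} the function $g_\eps$ is $C^2$, $g_\eps'(0)=\int_\Omega\nabla\pi(S_\eps x)\cdot S_\eps A_\eps x\d x=0$ by the Euler--Lagrange equation \eqref{eq:EL} for $S_\eps$, and Taylor's formula, after writing $A_\eps=\mu_\eps\widetilde A_\eps$ with $\widetilde A_\eps\to A_0$ and using $S_\eps e^{tA_\eps}\to R_0$ uniformly on $[0,1]$ together with the $C^2$ regularity of $\pi$ near $\overline{\mc O}$, yields $\frac1\eps(g_\eps(1)-g_\eps(0))=\frac{\mu_\eps^2}{2\eps}\bigl(\FF(R_0,A_0)+o(1)\bigr)$. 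If $\FF(R_0,A_0)>0$ this is $\ge\frac12\bigl(\FF(R_0,A_0)+o(1)\bigr)$ for $\eps$ small because $\mu_\eps^2/\eps\ge1$, while if $\FF(R_0,A_0)=0$ (the only other case, by \eqref{eq:2variation}) the third term is nonnegative since $S_\eps$ minimizes; in both cases $\liminf_{\eps\to0}\frac1\eps\int_\Omega(\pi(R_\eps x)-\pi(x))\d x\ge\frac12\FF(R_0,A_0)$, and summing the three liminfs gives $\EE_0(u_0,R_0)+\frac12\FF(R_0,A_0)\le\liminf_{\eps\to0}\frac1{\eps^2}\EE_\eps(y_\eps)$.

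For the limsup inequality, given $(u_0,R_0,A_0)$ I take $S_\eps:=R_0$, $A_\eps:=\sqrt\eps\,A_0$, $R_\eps:=R_0e^{\sqrt\eps A_0}$ (so $\mu_\eps=\sqrt\eps$ as $|A_0|\le1$, $A_\eps\to0$, $A_\eps/\mu_\eps=A_0$), and as $u_\eps$ the mollified sequence of Proposition~\ref{prop:gammalimsup}, setting $y_\eps(x):=R_\eps(x+\eps u_\eps(x))$. Frame indifference \ref{hyp:W2} removes $R_\eps$ from the elastic integrand, so $\limsup\frac1{\eps^2}\int_\Omega W(x,\nabla y_\eps)\d x\le\frac12\int_\Omega Q(x,e(u_0))\d x$ exactly as in Proposition~\ref{prop:gammalimsup}; Proposition~\ref{prop:new}/\ref{prop:new2} handles the middle term; and the Taylor computation above, now with increment $\sqrt\eps$, gives $\frac1\eps\int_\Omega(\pi(R_\eps x)-\pi(x))\d x\to\frac12\FF(R_0,A_0)$, so $\limsup\frac1{\eps^2}\EE_\eps(y_\eps)\le\EE_0(u_0,R_0)+\frac12\FF(R_0,A_0)$. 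Admissibility of $y_\eps$ (orientation preservation, zero average, $\det\nabla y_\eps\in L^\infty\subset L^q$) is checked as in Propositions~\ref{prop:gammalimsup} and~\ref{prop:gammaliminf2}. For a sequence of almost minimizers, Theorem~\ref{thm:convmin}/\ref{thm:convmin2} already gives the strong convergence $u_\eps\to u_0$, $R_\eps\to R_0\in\RR$, that $(u_0,R_0)$ minimizes $\EE_0$, and $\frac1{\eps^2}\inf\EE_\eps\to\min\EE_0$, hence $\frac1{\eps^2}\EE_\eps(y_\eps)\to\min\EE_0$. Since $\FF\ge0$, one has $\min(\EE_0+\frac12\FF)=\min\EE_0$; the refined liminf inequality then forces $\EE_0(u_0,R_0)+\frac12\FF(R_0,A_0)\le\min\EE_0\le\EE_0(u_0,R_0)$, hence $\FF(R_0,A_0)=0$, and $(u_0,R_0,A_0)$ minimizes $\EE_0+\frac12\FF$, from which the stated identities for the minima follow.

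I expect the delicate point to be exactly the lower bound on $\frac1\eps\int_\Omega(\pi(R_\eps x)-\pi(x))\d x$: one must combine the unconditional nonnegativity coming from $S_\eps\in\RR$ with a quantitative second-order expansion across the three competing scales $\eps$, $\sqrt\eps$ and $|A_\eps|$, treating separately the nondegenerate case $\FF(R_0,A_0)>0$, where the inequality $\mu_\eps^2/\eps\ge1$ is used, from the degenerate case $\FF(R_0,A_0)=0$, where it is not. One also has to check that the limit $A_0$ does not depend on the auxiliary near-optimal rotation sequence appearing in the elastic estimate, which holds because two such sequences of rotations differ by $O(\eps)$ while $\mu_\eps\ge\sqrt\eps$; the remaining steps only recombine results already established.
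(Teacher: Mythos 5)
Your proposal is correct and follows essentially the same route as the paper's proof: fix $S_\eps\in\RR$ and $A_\eps$ skew-symmetric realizing the distance, rescale by $|A_\eps|\vee\sqrt\eps$, then expand $\frac1\eps\int_\Omega(\pi(R_\eps x)-\pi(x))\d x$ to second order using the Euler--Lagrange equation \eqref{eq:EL} for $S_\eps$ to kill the first-order term, and exploit the unconditional nonnegativity of the whole term together with $\mu_\eps^2/\eps\ge 1$ to pass to the $\liminf$. Your formulation via the auxiliary function $g_\eps(t)=\int_\Omega\pi(S_\eps e^{tA_\eps}x)\d x$ and your explicit case split on whether $\FF(R_0,A_0)$ vanishes are cosmetic variants of the paper's computation (the paper observes directly that the bracketed term in the expansion is nonnegative and multiplied by $\alpha_\eps/(2\eps)\ge 1/2$); the recovery sequence $R_\eps=R_0e^{\sqrt\eps A_0}$, the use of Propositions~\ref{prop:new}/\ref{prop:new2} for the middle term, and the final argument for almost minimizers via $\FF\ge0$ and $\FF(R,0)=0$ all match the paper's treatment.
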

	
	\begin{proof}
		As for the compactness statement, since $\RR$ is a closed set, there exist $S_\eps\in\RR$ and $A_\eps\in \R^{n\times n}_{\rm skew}$ such that 
		$R_\eps=S_\eps e^{A_\eps}$ and
		\begin{equation*}
			\dist_{SO(n)}(R_\eps;\RR)=\dist_{SO(n)}(R_\eps, S_\eps)=|A_\eps|. 
		\end{equation*}
		Since, up to subsequences, $R_\eps\to R_0\in \RR$, we have that $A_\eps\to 0$. The remaining properties follow from the fact the sequence $\Big(\frac{A_\eps}{|A_\eps|\vee \sqrt{\eps}}\Big)$ is bounded by $1$.
		
		We now give a sketch of the proof of the liminf inequality.
		By Proposition~\ref{prop:gammaliminf} or \ref{prop:gammaliminf2} we have  that 
		\begin{equation*}
			\liminf\limits_{\eps\to 0}\frac{1}{\eps^2}\EE_\eps(y_\eps)\ge \EE_0(u_0,R_0)+\liminf\limits_{\eps\to 0}\int_{\Omega}\frac{\pi(R_\eps x)-\pi (x)}{\eps}\d x.
		\end{equation*}
		In fact, the last term above was always neglected in the previous computations, since it is nonnegative by \eqref{eq:IinR}. 
		Using that $S_\eps\in\RR$, a Taylor expansion of $\pi$ and of the exponential map yields
		\begin{align}
			\int_{\Omega}\frac{\pi(R_\eps x)-\pi (x)}{\eps}\d x
			&=\frac{1}{\eps}\int_{\Omega}\nabla\pi(S_\eps x)\cdot S_\eps {A}_\eps x\d x \nonumber \\
			&\ +\frac{1}{2\eps} \int_{\Omega}\big( \nabla\pi(S_\eps x)\cdot S_\eps {A}^2_\eps x+ D^2\pi(S_\eps(I+t_\eps(x) K_\eps) x)S_\eps K_\eps x\cdot S_\eps K_\eps x\big)\d x \nonumber \\
			&\ +\frac{1}{\eps}\int_{\Omega}|A_\eps|^3\nabla\pi(S_\eps x)\cdot S_\eps H_\eps x\d x, \label{eq:align}
		\end{align}
		where $t_\eps (x)\in [0,1]$, $H_\eps$ is a uniformly bounded matrix, and
		\begin{equation*}
			K_\eps:={A}_\eps+\frac{1}{2}{A}_\eps^2+|A_\eps|^3 H_\eps.
		\end{equation*}
		We now note that the first integral in \eqref{eq:align} is equal to $0$ by \eqref{eq:EL}; thus, by multiplying and dividing by $\alpha_\eps:=(|A_\eps|\vee\sqrt{\eps})^2$ we obtain
		\begin{align*}
			\int_{\Omega}&\frac{\pi(R_\eps x)-\pi (x)}{\eps}\d x \\
			&=\frac{\alpha_\eps}{2\eps}\Bigg[ \int_{\Omega}\Big( \nabla\pi(S_\eps x)\cdot S_\eps \frac{A^2_\eps x}{\alpha_\eps} + D^2\pi(S_\eps(I{+}t_\eps(x) K_\eps)x )S_\eps \frac{K_\eps x}{\sqrt{\alpha_\eps}}\cdot S_\eps \frac{K_\eps x}{\sqrt{\alpha_\eps}}\Big) \d x\\
			&\qquad\qquad\qquad\qquad+\frac{2|A_\eps|^3}{\alpha_\eps}\int_{\Omega}\nabla\pi(S_\eps x)\cdot S_\eps H_\eps x\d x\Bigg].
		\end{align*}
		We observe that the left-hand side is nonnegative, hence the term within square brackets is nonnegative, as well.
		Since $\sqrt{\alpha_\eps}=|A_\eps|\vee\sqrt{\eps}\ge \sqrt{\eps}$, $|A_\eps|\to 0$, $S_\eps\to R_0$, $\frac{A_\eps}{\sqrt{\alpha_\eps}}\to A_0$, letting $\eps\to 0$ and using the Dominated Convergence Theorem yield
		\begin{align*}
			\liminf\limits_{\eps\to 0}\int_{\Omega}\frac{\pi(R_\eps x)-\pi (x)}{\eps}\d x\ge \frac 12\int_{\Omega}\big(\nabla\pi (R_0x)\cdot R_0 A_0^2x+D^2\pi(R_0 x)R_0 A_0 x\cdot R_0 A_0 x\big)\d x.
		\end{align*}
		The liminf inequality follows now from the Divergence Theorem.
		
		For the construction of the recovery sequence we proceed as in Proposition~\ref{prop:gammalimsup} or~\ref{prop:gammaliminf2}, but choosing $R_\eps:=R_0e^{A_\eps}$ with
		$A_\eps:=\sqrt{\eps}A_0$. Since $R_0$ is in $\RR$, we can write
		\begin{equation}\label{eq:split}
			\begin{aligned}
				\frac {1}{\eps^2}\EE_\eps(y_\eps)&=\frac{1}{\eps^2}\int_{\Omega}W(x,\nabla y_\eps)\d x+\frac 1\eps\int_{\Omega}\left(\pi (y_\eps(x))\det\nabla y_\eps-\pi(R_\eps x)\right)\d x\\
				&\quad+\int_{\Omega}\frac{\pi(R_\eps x)-\pi (R_0 x)}{\eps}\d x.
			\end{aligned}
		\end{equation}
		The first two integrals at the right-hand side can be bounded by $\EE_0(u_0,R_0)$, arguing as in Proposition~\ref{prop:gammalimsup} or~\ref{prop:gammaliminf2}.
		Repeating the same computations as for the liminf inequality, one can show that
		the last integral converges to $\frac12\FF(R_0,A_0)$. 
		
		Convergence of almost minimizers and of infima can be proved exactly as in Theorems~\ref{thm:convmin} and~\ref{thm:convmin2}.
		Finally, we observe that the functional $\FF$ is always nonnegative by \eqref{eq:2variation2} and $\FF(R,0)=0$ for every $R\in\RR$. Hence, by minimality we deduce that $\FF(R_0,A_0)=0$. This concludes the proof.
	\end{proof}
	
	We conclude the paper with an example showing that, given any sequence $\lambda_\eps\to0$ such that 
	\begin{equation}\label{eq:lambda}
		\lim\limits_{\eps\to 0}\frac{\lambda_\eps^2}{\eps}=+\infty\qquad\text{ and }\qquad\lim\limits_{\eps\to 0}\frac{\lambda_\eps^k}{\eps}=0
	\end{equation}
	for some $k>2$, there may exist sequences of almost minimizers of $\mc E_\eps$ for which any approximating sequence of rotations has a distance from $\RR$ of order $\lambda_\eps$. This provides a further difference with the case of dead loads \cite{MaorMora}.

	\begin{ex}\label{rmk:exsqrt}
		We start by considering a sequence $(\lambda_\eps)$ satisfying \eqref{eq:lambda} with $k=3$.
		We assume the pressure intensity $\pi$ to be of class $C^3$. Moreover, we assume that the set of optimal rotations $\RR$ is finite and that
		for every $R_0\in\RR$ there exists $A_0\in \R^{n\times n}_{\rm skew}$ such that 
		\begin{equation}\label{eq:propexample}
			|A_0|=1 \qquad \text{and} \qquad \FF(R_0,A_0)=0. 
		\end{equation}
		These properties are satisfied, for instance, in Example~\ref{ex:ex} if the function $\varphi$ is strictly increasing. Indeed, in this case the function $\phi$ in \eqref{eq:phi} attains its minimum only at $\alpha=0$ and $\alpha=\pi$ and thus, $\RR=\{\pm I\}$. Moreover, in this example $\nabla\pi(x)=\nabla\pi(-x)=0$ for every $x\in\partial\Omega$, so that $\FF(R_0,A_0)=0$ for every $R_0\in\RR$ and every $A_0\in \R^{n\times n}_{\rm skew}$.
		Finally, $\pi$ is of class $C^3$ if we assume, in addition, $\varphi\in C^4([0,\pi/2])$ with $\varphi^{\rm (iv)}(0)=\varphi^{\rm (iv)}(\pi/2)=0$ and $\psi\in C^3([1,+\infty))$ with bounded third derivative and $\psi'''(1)=0$.
		
		Now let $(u_0, R_0)$ be a minimizer of $\EE_0$ on $\mathring{H}^{1}(\Omega;\R^n)\times \RR$ and let $A_0\in \R^{n\times n}_{\rm skew}$ satisfy \eqref{eq:propexample}.
		Let $R_\eps:=R_0 e^{\lambda_\eps A_0}$ and let $(u_\eps)$ be an approximating sequence for $u_0$ as in \eqref{eq:strong}.
		We claim that the deformations $y_\eps(x):=R_\eps(x+\eps u_\eps(x))$ are a sequence of almost minimizers of $\EE_\eps$. Indeed, 
		arguing as in the proof of the limsup inequality in Theorem~\ref{thm:Gammaref},
		we have by \eqref{eq:split} that
		\begin{align*}
			\lim\limits_{\eps\to 0}\frac{1}{\eps^2}\EE_\eps(y_\eps)&=\EE_0(u_0,R_0)+\lim\limits_{\eps\to 0} \int_{\Omega}\frac{\pi(R_\eps x){-}\pi (R_0 x)}{\eps}\d x= \min \limits_{\mathring{H}^{1}\times \RR}\EE_0+\lim\limits_{\eps\to 0} \int_{\Omega}\frac{\pi(R_\eps x){-}\pi (R_0 x)}{\eps}\d x\\
			&= \lim\limits_{\eps\to 0}\left(\frac{1}{\eps^2}\inf\limits_{\mathring{W}^{1,p}(\Omega;\R^n)}\EE_\eps+\int_{\Omega}\frac{\pi(R_\eps x){-}\pi (R_0 x)}{\eps}\d x\right).
		\end{align*}
		Therefore, the claim is proved if we show that the last term above vanishes, as $\eps\to 0$. To prove it we argue as in the proof of the liminf inequality in Theorem~\ref{thm:Gammaref}, now expanding up to the third order. By \eqref{eq:EL} and \eqref{eq:propexample} we obtain
		\begin{align}\label{eq:Taylor3}
			\int_{\Omega}\frac{\pi(R_\eps x){-}\pi (R_0 x)}{\eps}\d x=\frac{\lambda_\eps}{\eps}\int_{\Omega}\nabla\pi(R_0 x)\cdot R_0 {A}_0 x\d x+\frac{\lambda_\eps^2}{2\eps}\FF(R_0,A_0)+O\left(\frac{\lambda_\eps^3}{\eps}\right)=O\left(\frac{\lambda_\eps^3}{\eps}\right),
		\end{align}
		which proves the claim owing to \eqref{eq:lambda} with $k=3$.
		
		We note that the intrinsic distance of $R_\eps=R_0 e^{\lambda_\eps A_0}$ from $\RR$ in $SO(n)$ is of order $\lambda_\eps$. Indeed, since $\RR$ is finite by assumption,
		we have that $\d_{SO(n)}(R_\eps;\RR)=\d_{SO(n)}(R_\eps,R_0)$ for $\eps$ small enough. 
		By definition we clearly have $\d_{SO(n)}(R_\eps,R_0)\leq \lambda_\eps$. On the other hand, since the intrinsic distance in $SO(n)$ is equivalent to the Euclidean distance,
		we obtain
		\begin{equation*}
			\d_{SO(n)}(R_\eps,R_0)\ge c|e^{\lambda_\eps A_0}-I|\ge c \lambda_\eps.
		\end{equation*}
		
		We now prove that for the sequence $(y_\eps)$ constructed above, any sequence $(R_\eps')$ of approximating rotations satisfies
		\begin{equation}\label{eq:absurd}
			\d_{SO(n)}(R_\eps';\RR)\ge c \lambda_\eps.
		\end{equation}
		By Lemma~\ref{lemma:est1} or Lemma~\ref{lemma:est2} we deduce that $|R_\eps-R'_\eps|\le C\eps$, hence $R_\eps'\to R_0$. 
		Since $\RR$ is finite, we have that $\d_{SO(n)}(R_\eps';\RR)=\d_{SO(n)}(R_\eps',R_0)$ for $\eps$ small enough. 
		Let $A_\eps'\in \R^{n\times n}_{\rm skew}$ be such that $R_\eps'=R_0 e^{A_\eps'}$ and $\d_{SO(n)}(R_\eps',R_0)=|A_\eps'|$. Assume by contradiction that \eqref{eq:absurd} does not hold, that is,
		$|A'_\eps|/\lambda_\eps\to0$, as $\eps\to0$. Then we have
		\begin{equation*}
			C\eps\ge |R_\eps-R_\eps'| = | e^{\lambda_\eps A_0}-e^{A_\eps'}| 
			\ge |e^{\lambda_\eps A_0}-I | - | e^{A'_\eps}-I | \ge |e^{\lambda_\eps A_0}-I | - c|A_\eps'|.
		\end{equation*}
		Dividing by $\lambda_\eps$ and sending $\eps\to0$, we obtain a contradiction, since the left-hand side vanishes by \eqref{eq:lambda}
		and the right-hand side converges to $|A_0|=1$.
		
		Using again that $|R_\eps-R'_\eps|\le C\eps$, it is easy to see that in fact the intrinsic distance of $R_\eps'$ from $\RR$ is of order $\lambda_\eps$.
		
		If, instead, the sequence $(\lambda_\eps)$ satisfies \eqref{eq:lambda} for some $k\geq4$, the previous arguments can be adapted with small changes as follows. 
		We assume, in addition, that $\pi$ is of class $C^k$ and that for every $R_0\in\RR$ there exists $A_0\in \R^{n\times n}_{\rm skew}$ such that 
		\begin{equation}\label{eq:propexample2}
			|A_0|=1 \qquad \text{and} \qquad {\mc V}_j(R_0,A_0)=0 \quad\text{ for every } j=2,\dots,k-1,
		\end{equation}
		where ${\mc V}_j(R_0,A_0)$ is the $j$-th variation of the functional in \eqref{eq:optrot} at $R_0$ computed in the direction $A_0$.
		This is fulfilled by the pressure load in Example~\ref{ex:ex}, if $\varphi$ and $\psi$ have enough regularity and satisfy suitable boundary conditions.
		By expanding up to order $k$ in \eqref{eq:Taylor3}, condition \eqref{eq:propexample2} guarantees that the sequence $(y_\eps)$, constructed as above, is still a sequence of almost minimizers. The bounds on the intrinsic distance from $\RR$ can be proved as before.
	\end{ex}

	\begin{rmk}
		If condition \eqref{eq:propexample} is not satisfied, that is, for every $R_0\in\RR$ one has 
		\begin{equation*}
			\FF(R_0,A_0)=0 \qquad \Longleftrightarrow \qquad A_0=0, 
		\end{equation*}
		the phenomenon described in the previous example can not arise.
		More precisely, one can show that the intrinsic distance of the approximating rotations from $\RR$ is at most of order $\sqrt{\eps}$, as in \eqref{eq:distsqrt}.
		The argument is the same as in \cite[Theorem~5.1]{MaorMora}, combined with \eqref{eq:align}.
	\end{rmk}
	
	\bigskip
	
	\subsection*{Acknowledgements.}
	The authors acknowledge support by the Italian Ministry of University and Research through the National Research Project PRIN 2017 ``Variational Methods for Stationary and Evolution Problems with Singularities and Interfaces''. The authors are members of GNAMPA--INdAM. 
	\bigskip

	\bibliographystyle{siam}

\end{document}